\newcommand{\opnorm}{\@ifstar\@opnorms\@opnorm}
\newcommand{\@opnorms}[1]{%
	\left|\mkern-1.5mu\left|\mkern-1.5mu\left|
	#1
	\right|\mkern-1.5mu\right|\mkern-1.5mu\right|
}
\newcommand{\@opnorm}[2][]{%
	\mathopen{#1|\mkern-1.5mu#1|\mkern-1.5mu#1|}
	#2
	\mathclose{#1|\mkern-1.5mu#1|\mkern-1.5mu#1|}
}
\renewcommand*{\eqref}[1]{%
	\hyperref[{#1}]{\textup{\tagform@{\ref*{#1}}}}%
}
\setlist[enumerate,1]{label={\textup{(\arabic*)}}}
\theoremstyle{plain}
\newtheorem{theorem}{Theorem}[section]
\newtheorem{lemma}[theorem]{Lemma}
\newtheorem{corollary}[theorem]{Corollary}
\newtheorem{proposition}[theorem]{Proposition}
\theoremstyle{definition}
\newtheorem{remark}[theorem]{Remark}
\newtheorem{definition}[theorem]{Definition}
\numberwithin{equation}{section}
\def\L1loc{L^1_{\text{loc}}}
\begin{document}

\title{Wiener--Luxemburg amalgam spaces}
\author{Dalimil Pe{\v s}a}

\address{Dalimil Pe{\v s}a, Department of Mathematical Analysis, Faculty of Mathematics and
	Physics, Charles University, Sokolovsk\'a~83,
	186~75 Praha~8, Czech Republic}
\email{pesa@karlin.mff.cuni.cz}
\urladdr{0000-0001-6638-0913}

\subjclass[2010]{46E30, 46A16}
\keywords{Wiener--Luxemburg amalgam spaces, Wiener amalgam spaces, embedding theorems, associate spaces, rearrangement-invariant Banach function spaces, quasi-Banach function spaces, Hardy--Littlewood--P\'{o}lya principle}

\thanks{This research was supported by the grant P201-18-00580S of the Czech Science Foundation, by the Primus research programme PRIMUS/21/SCI/002 of Charles University, and by the Danube Region Grant no. 8X20043 TIFREFUS}

\begin{abstract}
In this paper we introduce the concept of Wiener--Luxemburg amalgam spaces which are a modification of the more classical Wiener amalgam spaces intended to address some of the shortcomings the latter face in the context of rearrangement-invariant Banach function spaces.

We introduce the Wiener--Luxemburg amalgam spaces and study their properties, including (but nor limited to) their normability, embeddings between them and their associate spaces. We also study amalgams of quasi-Banach function spaces and introduce a necessary generalisation of the concept of associate spaces. We then apply this general theory to resolve the question whether the Hardy--Littlewood--P\'{o}lya principle holds for all r.i.~quasi-Banach function spaces. Finally, we illustrate the asserted shortcomings of Wiener amalgam spaces by providing counterexamples to certain properties of Banach function spaces as well as rearrangement invariance.
\end{abstract}

\date{\today}

\maketitle

\makeatletter
   \providecommand\@dotsep{2}
\makeatother

\section{Introduction}

In this paper we introduce the concept of Wiener--Luxemburg amalgam spaces which are a modification of the more classical Wiener amalgam spaces. The principal idea of both kinds of amalgam spaces is to treat separately the local and global behaviour of a given function, in the sense that said function is required to be locally in one space and globally in a different space. The exact meaning of being locally and globally in a space varies in literature, depending on the desired generality and personal preference.

The classical Wiener amalgams approach this issue in a very general, albeit quite non-trivial, manner. They were, in their general form, first introduced by Feichtinger in \cite{Feichtinger83}, although the less general cases were studied earlier, see for example the paper \cite{Holland75} due to Holland, and some special cases date as far back as 1926 when the first example of such a space was introduced by Wiener in \cite{Wiener26}. The different versions of these spaces saw many applications in the last decades, great surveys of which have been conducted, concerning a somewhat restricted version, by Fournier and Stewart in \cite{FournierSteward85} and, concerning the more general versions, by Feichtinger in \cite{Feichtinger90} and \cite{Feichtinger92}. Probably the most famous example is the Tauberian theorem for the Fourier transform on the real line due to Wiener (see \cite{Wiener32} and~\cite{Wiener33}), other examples include the theory of Fourier multipliers (see \cite{EdwardsHewitt77}), several variation of the sampling theorem (see \cite{FeichtingerGrochenig92}), and the theory of product convolution--operators (see \cite{BusbySmith81}).

One unfortunate property of Wiener amalgams is that, even in the simplest and most natural cases, their construction does not preserve the properties of Banach function spaces, nor does it preserve rearrangement invariance (see Appendix~\ref{Appendix}). This approach is therefore unsuitable when one wishes to work in this context. But this often is the case, since there are many situations when the need arises naturally to prescribe separately the conditions on local and on global behaviour of a function. One such situation is the study of optimal Sobolev type embeddings over the entire Euclidean space in the context of rearrangement-invariant Banach function spaces as performed by Alberico, Cianchi, Pick and Slav{\' i}kov{\' a} in \cite{AlbericoCianchi2018}. A very natural example is the optimal target space, in the context of rearrangement-invariant Banach function spaces, for the limiting case of the classical Sobolev embeddings over the entire Euclidean space, which has been found by Vyb{\' i}ral in \cite{Vybiral07}. Another such situation arised during the study of generalised Lorentz--Zygmund spaces which led to the introduction of broken logarithmic functions to allow separate treatment of local and global properties of functions in this context. For further details and a comprehensive study of generalised Lorentz--Zygmund spaces we refer the reader to \cite{OpicPick99}. Further example of an area where this approach has been successfully employed is the theory of interpolation, where description of sums and intersections of spaces as de facto amalgams have proven useful, see \cite{Bathory18} and \cite{BennettRudnick80} for details.

This led us to develop the theory of Wiener--Luxemburg amalgam spaces, which aims to eliminate the above mentioned limitations and to provide a general framework for separate prescription of local and global conditions in the context of rearrangement-invariant Banach function spaces. The starting point is provided by the non-increasing rearrangement, which is the crucial element in the theory of said spaces and which naturally separates the local behaviour of a function from its global behaviour, at least in the sense of size. This allows us to define Wiener--Luxemburg amalgam spaces in a very easy and straightforward manner.

While the main part of this paper focuses on amalgams of rearrangement-invariant Banach function spaces, we will in the later sections also use the recent advances in the theory of quasi-Banach function spaces (see \cite{NekvindaPesa20}) and extend our theory into this context. While of independent interest, this will also allow us to view the theory of quasi-Banach function spaces from a new viewpoint and provide a negative answer to an important open question whether Hardy--Littlewood--P\'{o}lya principle holds for all rearrangement-invariant quasi-Banach function norms.

The paper is structured as follows. In Section~\ref{CHP} we present the basic theoretical background needed in order to build the theory in later sections.

Section~\ref{CHWLAS} is the main part of the paper where the theory of Wiener--Luxemburg amalgam spaces is developed in some detail for the more classical context of rearrangement-invariant Banach function spaces. We show that they too are rearrangement-invariant Banach function spaces, then we provide a characterisation of their associate spaces, a full characterisation of their embeddings, and put them in relation with the concepts of sum and intersection of Banach spaces. Furthermore we refine the well known classical result that it holds for all rearrangement-invariant Banach function spaces $A$ that
\begin{equation*}
L^1 \cap L^{\infty} \hookrightarrow A \hookrightarrow L^1 + L^{\infty}
\end{equation*}
by showing that $L^1$ is the locally weakest and globally strongest rearrangement-invariant Banach function space, while $L^{\infty}$ is, in the same setting, the locally strongest and globally weakest space. Needless to say, our definition of Wiener--Luxemburg amalgam spaces is general enough to cover all the spaces appearing in the applications outlined above.

We then in Section~\ref{CHIAS} switch to the more abstract context of rearrangement-invariant quasi-Banach function spaces and introduce an extension of the concept of associate spaces which we call the integrable associate spaces. This concept emerged naturally in the study of associate spaces of Wiener--Luxemburg amalgams in the context of quasi-Banach function spaces, so in this section we lay the groundwork for Section~\ref{CHWLASq}. However, we believe this topic to be interesting in its own right and the treatment we provide goes beyond what is strictly necessary for our later work.

Section~\ref{CHWLASq} then contains the extension of our theory to the context of rearrangement-invariant quasi-Banach function spaces. We show that, in this context, the Wiener--Luxemburg amalgam spaces are rearrangement-invariant quasi-Banach function spaces, we describe their integrable associate spaces (and, in the case when it is meaningful, their associate spaces), and provide a precise treatment of the above mentioned refinement of the result that
\begin{equation*}
L^1 \cap L^{\infty} \hookrightarrow A \hookrightarrow L^1 + L^{\infty}
\end{equation*}
which shows what properties of rearrangement-invariant Banach function norms are relevant for the individual embeddings. Last but not least, we show that the validity of the Hardy--Littlewood--P\'{o}lya principle is sufficient for one of the embeddings in question. Since there are spaces for which this embedding fails, we obtain a negative answer to the important open question whether the Hardy--Littlewood--P\'{o}lya principle holds for all r.i.~quasi-Banach function spaces.

Finally in Appendix~\ref{Appendix} we present some counterexamples which show that our claim that Wiener amalgams are in general neither Banach function spaces nor rearrangement-invariant is justified. This serves three distinct purposes: first, it fills a gap in literature; second, it provides some insight into the thought process behind our definition of Wiener--Luxemburg amalgam spaces; and third, it is an application of our theory, since we use Wiener--Luxemburg amalgams to show that Wiener amalgams are not rearrangement-invariant.

\section{Preliminaries}\label{CHP}
This section serves to establish the basic theoretical background upon which we will build our theory of Wiener--Luxemburg amalgam spaces. The definitions and notation is intended to be as standard as possible. The usual reference for most of this theory is \cite{BennettSharpley88}.

Throughout this paper we will denote by $(R, \mu)$, and occasionally by $(S, \nu)$, some arbitrary (totally) sigma-finite measure space. Given a $\mu$-measurable set $E \subseteq R$ we will denote its characteristic function by $\chi_E$. By $M(R, \mu)$ we will denote the set of all extended complex-valued $\mu$-measurable functions defined on $R$. As is customary, we will identify functions that coincide $\mu$-almost everywhere. We will further denote by $M_0(R, \mu)$ and $M_+(R, \mu)$ the subsets of $M(R, \mu)$ containing, respectively, the functions finite $\mu$-almost everywhere and the non-negative functions.

For brevity, we will abbreviate $\mu$-almost everywhere, $M(R, \mu)$, $M_0(R, \mu)$, and $M_+(R, \mu)$ to $\mu$-a.e., $M$, $M_0$, and $M_+$, respectively, when there is no risk of confusing the reader.

When $X, Y$ are two topological linear spaces, we will denote by $Y \hookrightarrow X$ that $Y \subseteq X$ and that the identity mapping $I : Y \rightarrow X$ is continuous.

As for some special cases, we will denote by $\lambda^n$ the classical $n$-dimensional Lebesgue measure, with the exception of the $1$-dimensional case in which we will simply write $\lambda$. We will further denote by $m$ the counting measure over $\mathbb{N}$. When $p \in (0, \infty]$ we will denote by $L^p$ the classical Lebesgue space (of functions in $M(R, \mu)$) defined by
\begin{equation*}
	L^p = \left \{ f \in M(R, \mu); \; \int_R \lvert f \rvert^p \: d\mu < \infty \right \}
\end{equation*}
equipped with the customary (quasi-)norm
\begin{equation*}
	\lVert f \rVert_p = \left ( \int_R \lvert f \rvert^p \: d\mu \right )^{\frac{1}{p} },
\end{equation*}
with the usual modifications when $p=\infty$. In the special case when $(R, \mu) = (\mathbb{N}, m)$ we will denote this space by $l^p$.

Note that in this paper we consider $0$ to be an element of $\mathbb{N}$.

\subsection{Non-increasing rearrangement}
We now present the concept of the non-increasing rearrangement of a function and state some of its properties that will be important later in the paper. We proceed in accordance with \cite[Chapter~2]{BennettSharpley88}.

We start by introducing the distribution function.

\begin{definition}	
	The distribution function $\mu_f$ of a function $f \in M$ is defined for $s \in [0, \infty)$ by
	\begin{equation*}
		\mu_f(s) = \mu(\{ t \in R; \; \lvert f(t) \rvert > s \}).
	\end{equation*}	
\end{definition}

The non-increasing rearrangement is then defined as the generalised inverse of the distribution function.

\begin{definition}
	The non-increasing rearrangement $f^*$ of a function $f \in M$ is defined for $t \in [0, \infty)$ by
	\begin{equation*}
		f^*(t) = \inf \{ s \in [0, \infty); \; \mu_f(s) \leq t \}.
	\end{equation*}
\end{definition}

For the basic properties of the distribution function and the non-increasing rearrangement, with proofs, see \cite[Chapter~2, Proposition~1.3]{BennettSharpley88} and \cite[Chapter~2, Proposition~1.7]{BennettSharpley88}, respectively. We consider those properties to be classical and well known and we will be using them without further explicit reference.

An important concept used in the paper is that of equimeasurability defined below.

\begin{definition} \label{DEM}
	We say that the functions $f \in M(R, \mu)$ and $g \in M(S, \nu)$ are equimeasurable if $\mu_f = \nu_g$.
\end{definition}

It is not hard to show that two functions are equimeasurable if and only if their non-increasing rearrangements coincide too.

A very important classical result is the Hardy--Littlewood inequality which we will use extensively in the paper. For proof, see for example \cite[Chapter~2, Theorem~2.2]{BennettSharpley88}.

\begin{theorem} \label{THLI}
	It holds for all $f, g \in M$ that
	\begin{equation*}
		\int_R \lvert fg \rvert \: d\mu \leq \int_0^{\infty} f^*g^* \: d\lambda.
	\end{equation*}
\end{theorem}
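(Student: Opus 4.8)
The plan is to prove the inequality by the layer--cake (``distribution function'') representation combined with Tonelli's theorem. First I would write both $|f|$ and $|g|$ as integrals of characteristic functions of their super-level sets: for every $t \in R$,
\[
|f(t)| = \int_0^\infty \chi_{\{|f|>s\}}(t)\, ds, \qquad |g(t)| = \int_0^\infty \chi_{\{|g|>\sigma\}}(t)\, d\sigma.
\]
Multiplying these two identities, integrating over $R$, and using that the integrand is non-negative and $(R,\mu)$ is $\sigma$-finite, Tonelli's theorem gives
\[
\int_R |fg|\, d\mu = \int_0^\infty \!\!\int_0^\infty \mu\bigl(\{|f|>s\} \cap \{|g|>\sigma\}\bigr)\, ds\, d\sigma.
\]
Since the set $\{|f|>s\} \cap \{|g|>\sigma\}$ is contained in each of $\{|f|>s\}$ and $\{|g|>\sigma\}$, its measure is at most $\min\bigl(\mu_f(s), \mu_g(\sigma)\bigr)$, whence
\[
\int_R |fg|\, d\mu \le \int_0^\infty \!\!\int_0^\infty \min\bigl(\mu_f(s), \mu_g(\sigma)\bigr)\, ds\, d\sigma.
\]

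Second, I would run the same computation on the right-hand side. As $f^*$ and $g^*$ are non-negative, the layer--cake formula and Tonelli again yield
\[
\int_0^\infty f^* g^*\, d\lambda = \int_0^\infty \!\!\int_0^\infty \lambda\bigl(\{f^*>s\} \cap \{g^*>\sigma\}\bigr)\, ds\, d\sigma.
\]
The key point now is that $f^*$ is non-increasing, so each super-level set $\{t \ge 0 : f^*(t) > s\}$ is an interval of the form $[0, a_s)$; and since $f$ and $f^*$ are equimeasurable, $\lambda(\{f^*>s\}) = \mu_{f^*}(s) = \mu_f(s)$, so in fact $\{f^*>s\} = [0, \mu_f(s))$. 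Likewise $\{g^*>\sigma\} = [0, \mu_g(\sigma))$, and therefore
\[
\{f^*>s\} \cap \{g^*>\sigma\} = \bigl[\,0,\, \min(\mu_f(s), \mu_g(\sigma))\,\bigr),
\]
a set of Lebesgue measure exactly $\min\bigl(\mu_f(s), \mu_g(\sigma)\bigr)$. Substituting this back,
\[
\int_0^\infty f^* g^*\, d\lambda = \int_0^\infty \!\!\int_0^\infty \min\bigl(\mu_f(s), \mu_g(\sigma)\bigr)\, ds\, d\sigma,
\]
and comparing with the bound from the first step finishes the proof.

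I expect the only point genuinely needing care to be the identification $\{f^*>s\} = [0, \mu_f(s))$ in the second step; it rests on $f^*$ being non-increasing together with the equimeasurability of $f$ and $f^*$, both of which are among the classical properties of the rearrangement recorded in \cite[Chapter~2]{BennettSharpley88} and used here without further comment. Everything else is a routine application of the layer--cake representation and Tonelli's theorem, legitimate because all integrands in sight are non-negative and the underlying space is $\sigma$-finite; in particular no integrability hypothesis on $f$ or $g$ is required, both sides being allowed to be infinite. An alternative would be to first verify the estimate for characteristic functions, where it collapses to $\mu(E \cap F) \le \min(\mu(E), \mu(F))$, then pass to non-negative simple functions and finally to arbitrary $f,g$ by monotone convergence; but handling products of simple functions is more cumbersome than the direct double layer--cake argument above, so I would favour the latter.
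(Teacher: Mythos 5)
Your argument is correct, and it is essentially the classical proof of the Hardy--Littlewood inequality: the paper does not prove Theorem~\ref{THLI} itself but cites \cite[Chapter~2, Theorem~2.2]{BennettSharpley88}, whose proof is exactly this double layer--cake/Tonelli reduction to the estimate $\mu(\{|f|>s\}\cap\{|g|>\sigma\})\le\min(\mu_f(s),\mu_g(\sigma))$ together with the identification $\{f^*>s\}=[0,\mu_f(s))$. The one point you flag as delicate is indeed fine, since $f^*(t)>s$ if and only if $t<\mu_f(s)$ (and in any case a possible endpoint would not change the Lebesgue measure), so no gap remains.
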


It follows directly from this result that it holds for every $f,g \in M$ that
\begin{equation*}
	\sup_{\substack{\tilde{g} \in M \\ \tilde{g}^* = g^*}} \int_R \lvert f \tilde{g} \rvert \: d\mu \leq \int_0^{\infty} f^*g^* \: d\lambda.
\end{equation*}
This motivates the definition of resonant measure spaces.

\begin{definition}
	A sigma-finite measure space $(R, \mu)$ is said to be resonant if it holds for all $f, g \in M(R, \mu)$ that
	\begin{equation*}
		\sup_{\substack{\tilde{g} \in M \\ \tilde{g}^* = g^*}} \int_R \lvert f \tilde{g} \rvert \: d\mu = \int_0^{\infty} f^* g^* \: d\lambda.
	\end{equation*}
\end{definition}

The property of being resonant is an important one. Luckily, there is a straightforward characterisation of resonant measure spaces. For proof and further details see \cite[Chapter~2, Theorem~2.7]{BennettSharpley88}.

\begin{theorem}
	A sigma-finite measure space is resonant if and only if it is either non-atomic or completely atomic with all atoms having equal measure.
\end{theorem}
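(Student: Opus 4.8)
The plan is to reduce to $f,g\in M_+$ (legitimate because $\int_R\lvert f\tilde g\rvert\,d\mu=\int_R\lvert f\rvert\,\lvert\tilde g\rvert\,d\mu$, because $\tilde g^*=g^*$ is equivalent to $\lvert\tilde g\rvert$ being equimeasurable with $\lvert g\rvert$, and because $\lvert f\rvert^*=f^*$), to observe that the inequality ``$\leq$'' in the resonance identity is exactly the reformulation of Theorem~\ref{THLI} recorded in the text before the definition, and hence to prove only ``$\geq$'' when $(R,\mu)$ is non-atomic or completely atomic with atoms of equal measure, and to exhibit failure of the identity in all remaining cases. I will also use that $\mu_g(s)\leq\mu(R)$ for every $s$, so that $f^*$ and $g^*$ vanish on $[\mu(R),\infty)$ and integrals over $[0,\mu(R))$ and $[0,\infty)$ may be interchanged.

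For non-atomic $(R,\mu)$ the point is to rearrange $g$ into a single function comonotone with $f$, which then attains the Hardy--Littlewood bound exactly. The cleanest implementation is to invoke the classical fact (the sort of measure-theoretic result developed in \cite[Chapter~2]{BennettSharpley88}, and the place where non-atomicity is genuinely used) that there is a measure-preserving transformation $\phi\colon R\to[0,\mu(R))$ with $f=f^*\circ\phi$ $\mu$-a.e.---obtained, roughly, by sending $x$ to its ``rank'' $\mu_f(\lvert f(x)\rvert)$ and spreading each positive-measure level set of $f$ measure-preservingly over an interval of the right length---and then to put $\tilde g=g^*\circ\phi\in M_+$: since $\phi$ is measure-preserving we get $\tilde g^*=g^*$ and
\[
\int_R f\tilde g\,d\mu=\int_R(f^*\circ\phi)(g^*\circ\phi)\,d\mu=\int_0^{\mu(R)}f^*g^*\,d\lambda=\int_0^\infty f^*g^*\,d\lambda .
\]
To avoid the transformation one can instead reduce by monotone convergence (using $f_0\uparrow f\Rightarrow f_0^*\uparrow f^*\Rightarrow\int f_0^*g^*\uparrow\int f^*g^*$) to a simple $f=\sum_{i=1}^m a_i\chi_{A_i}$ with $a_1>\dots>a_m>0$ and $A_i$ pairwise disjoint of finite measure, set $\beta_i=\sum_{j\leq i}\mu(A_j)$, $\beta_0=0$, and build $\tilde g$ piecewise so that it is equimeasurable with $g^*|_{[\beta_{i-1},\beta_i)}$ on $A_i$ and with $g^*|_{[\beta_m,\mu(R))}$ on $R\setminus\bigcup_i A_i$---possible since each of these pieces of $\mu$ is non-atomic of exactly the right total mass---obtaining $\tilde g^*=g^*$ and $\int_R f\tilde g\,d\mu=\sum_i a_i\int_{\beta_{i-1}}^{\beta_i}g^*\,d\lambda=\int_0^\infty f^*g^*\,d\lambda$.

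If $(R,\mu)$ is completely atomic with all atoms of measure $c$, I would identify it (mod a null set) with a disjoint union of atoms and functions in $M$ with their sequences of a.e.-constant atomic values; $\tilde g^*=g^*$ then means the value sequence of $\lvert\tilde g\rvert$ is a rearrangement of that of $\lvert g\rvert$. Fixing $\varepsilon>0$, truncating $f$ (monotone convergence) to finitely many atoms with nonzero values relabelled as $a_1\geq\dots\geq a_N>0$, and letting $b_1\geq\dots\geq b_N$ be the $N$ largest values of $\lvert g\rvert$, I define $\tilde g$ by placing $b_j$ on the atom carrying $a_j$ and the remaining values bijectively elsewhere; this admissible rearrangement satisfies $\int_R f\tilde g\,d\mu=c\sum_{j=1}^N a_j b_j=\int_0^\infty f^*g^*\,d\lambda$ for the truncated $f$ (Hardy--Littlewood rearrangement inequality for sequences), hence within $\varepsilon$ of the corresponding integral for the original $f$. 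For the converse I argue contrapositively: if $(R,\mu)$ is $\sigma$-finite but neither non-atomic nor completely atomic with atoms of equal measure, then it has an atom, and a short dichotomy leaves either two atoms $E_1,E_2$ of distinct measures $c_1<c_2$, or an atom $E_1$ of measure $c_1$ together with a non-atomic part $N$. In the first case I take $f=a\chi_{E_1}+b\chi_{E_2}$ ($b>a>0$) and $g=\chi_{E_1}$: then $g^*=\chi_{[0,c_1)}$, $f^*=b\chi_{[0,c_2)}+a\chi_{[c_2,c_1+c_2)}$, so $\int_0^\infty f^*g^*\,d\lambda=bc_1$, whereas $\tilde g^*=g^*$ forces $\lvert\tilde g\rvert=\chi_F$ with $\mu(F)=c_1$, and since the atom $E_2$ has measure $c_2>c_1$ it follows that $F$ meets $E_2$ in a null set only and $\int_R\lvert f\tilde g\rvert\,d\mu\leq ac_1<bc_1$. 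In the second case, for $N_1\subseteq N$ with $0<\mu(N_1)=\delta<c_1$ I take $f=\chi_{E_1}$, $g=\chi_{N_1}$: then $\int_0^\infty f^*g^*\,d\lambda=\delta>0$, but $\tilde g^*=g^*$ forces $\lvert\tilde g\rvert=\chi_F$ with $\mu(F)=\delta<c_1$, so $F$ misses the atom $E_1$ and $\int_R\lvert f\tilde g\rvert\,d\mu=0$. Either way resonance fails.

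I expect the real work to lie in the two ``$\Leftarrow$'' cases, the crux being the comonotone rearrangement of $g$; the non-atomic case is where non-atomicity is essentially used, through the level-set-splitting (measure-preserving-transformation) construction, while the atomic case reduces to the classical rearrangement inequality for sequences together with a truncation. The ``$\Rightarrow$'' direction is then a routine case check, powered by the single observation that an atom of measure $c$ can carry only those value-blocks of $g$ whose Lebesgue measure is exactly $c$ or $0$.
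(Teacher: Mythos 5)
First, a point of reference: the paper does not prove this theorem at all --- it is quoted with a citation to \cite[Chapter~2, Theorem~2.7]{BennettSharpley88} --- so your argument can only be compared with the standard proof, whose architecture (reduction to nonnegative simple $f$ by monotone convergence, attainment of the Hardy--Littlewood bound in the two good cases, atoms as obstructions otherwise) you essentially follow, and your contrapositive direction is correct. However, the ``classical fact'' you invoke in the non-atomic case is false as stated: on $([0,\infty),\lambda)$ take $f=\chi_A+\tfrac12\chi_B$ with $\lambda(A)=\infty$, $\lambda(B)=1$; then $f^*\equiv 1$, so no measure-preserving $\phi$ can satisfy $f=f^*\circ\phi$ a.e. (the Ryff-type theorem requires, e.g., that $\mu_f(s)<\infty$ for $s>0$, and is stated on $\operatorname{supp} f$). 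This is harmless only because your fallback construction --- splitting the level sets of a simple $f$ and transplanting $g^*$ piecewise via measure-preserving maps of the non-atomic pieces $A_i$ onto $[\beta_{i-1},\beta_i)$ --- is correct and self-contained, and together with monotone convergence it carries the whole non-atomic case; you should make that, not the transformation, the actual argument.

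The genuine gap is in the completely atomic case: ``the $N$ largest values of $\lvert g\rvert$'' need not exist, and the supremum need not be attained. Take atoms of common measure $c$ and $\lvert g\rvert$ with values $1-1/j$, $j\in\mathbb{N}$: then $g^*\equiv 1$, yet no value equals $1$, so for $f=\chi_{E}$ ($E$ a single atom) every admissible $\tilde g$ gives $\int_R f\lvert\tilde g\rvert\,d\mu\leq c(1-1/j)<c=\int_0^\infty f^*g^*\,d\lambda$; your claimed exact equality for the truncated $f$ is therefore false in general. The repair is an $\varepsilon$-argument in $g$ as well: with $\gamma_j=g^*\bigl((j-1)c\bigr)$ (assume first all $\gamma_j<\infty$), the sets $S_j=\{\text{atoms }i:\ \lvert g(i)\rvert>\gamma_j-\varepsilon\}$ are nested and satisfy $\#S_j\geq j$ by the definition of $g^*$, so one can pick distinct atoms $i_1,\dots,i_N$ with $\lvert g(i_j)\rvert>\gamma_j-\varepsilon$, place these values on the atoms carrying $a_1\geq\dots\geq a_N$ and the remaining nonzero values of $g$ injectively elsewhere (equimeasurability only requires preserving the multiset of nonzero values, not a bijection of all atoms, which is also where your phrase ``rearrangement of the value sequence'' is slightly too strong); this yields $c\sum_j a_j\lvert g(i_j)\rvert\geq\int_0^\infty \tilde f^*g^*\,d\lambda-\varepsilon c\sum_j a_j$ for the truncated $\tilde f$, and the case $\gamma_j=\infty$ is handled by choosing values exceeding an arbitrary $K$. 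With these two repairs the proof goes through.
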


\subsection{Norms and quasinorms} \label{SNQ}
In this subsection, and also in the following one, we provide the definitions for several classes of functionals we will study in the paper. All definitions should be standard or at least straightforward generalisations of standard ones.

The starting point shall be the class of norms.

\begin{definition} \label{DN}
	Let $X$ be a complex linear space. A functional $\lVert \cdot \rVert : X \rightarrow [0, \infty)$ will be called a norm if it satisfies the following conditions:
	\begin{enumerate}
		\item it is positively homogeneous, i.e.~$\forall a \in \mathbb{C} \; \forall x \in X : \lVert ax  \rVert = \lvert a \rvert \lVert x \rVert$,
		\item it satisfies $\lVert x \rVert = 0 \Leftrightarrow x = 0$  in $X$,
		\item it is subadditive, i.e.~$\forall x,y \in X : \lVert x+y \rVert \leq \lVert x \rVert + \lVert y \rVert$.
	\end{enumerate}
\end{definition}

Because the definition of a norm is sometimes too restrictive we will need a class of weaker functionals, namely quasinorms.

\begin{definition} \label{DQ}
	Let $X$ be a complex linear space. A functional $\lVert \cdot \rVert : X \rightarrow [0, \infty)$ will be called a quasinorm if it satisfies the following conditions:
	\begin{enumerate}
		\item it is positively homogeneous, i.e.~$\forall a \in \mathbb{C} \; \forall x \in X : \lVert ax  \rVert = \lvert a \rvert \lVert x \rVert$,
		\item it satisfies $\lVert x \rVert = 0 \Leftrightarrow x = 0$  in $X$,
		\item \label{DQ3} there is a constant $C\geq1$, called the modulus of concavity of $\lVert \cdot \rVert$, such that it is subadditive up to this constant, i.e.~$\forall x,y \in X \: : \: \lVert x+y \rVert \leq C(\lVert x \rVert + \lVert y \rVert)$.
	\end{enumerate}
\end{definition}

It is obvious that every norm is also a quasinorm with the modulus of concavity equal to $1$ and that every quasinorm with the modulus of concavity equal to $1$ is also a norm.

It is a well-known fact that every norm defines a metrizable topology on $X$ and that it is continuous with respect to that topology. This is not true for quasinorms, but this can be remedied thanks to the Aoki--Rolewicz theorem which we list below. Further details can be found for example in \cite{JohnsonLindenstraus03-25} or in \cite[Appendix~H]{BenyaminiLindenstrauss00}.

\begin{theorem}
	Let $\lVert \cdot \rVert_X$ be a quasinorm over the linear space $X$. Then there is a  quasinorm $\opnorm*{\cdot}_{X}$ over $X$ such that
	\begin{enumerate}
		\item there is a finite constant $C_0 > 0$ such that it holds for all $x \in X$ that
		\begin{equation*}
			C_0^{-1} \lVert x \rVert_X \leq \opnorm*{x}_{X} \leq C_0 \lVert x \rVert_X,
		\end{equation*}
		\item there is an $r \in (0, 1]$ such that it holds for all $x,y \in X$ that
		\begin{equation*}
			\opnorm*{x+y}_{X}^r \leq \opnorm*{x}_{X}^r + \opnorm*{y}_{X}^r.
		\end{equation*}
	\end{enumerate}
\end{theorem}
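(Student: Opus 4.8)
The plan is to realise $\opnorm*{\cdot}_{X}$ as the largest $r$-subadditive functional below $\lVert\cdot\rVert_X$, via the classical infimum-over-decompositions construction. First I would fix the modulus of concavity $C\geq1$ of $\lVert\cdot\rVert_X$ and choose an exponent $r\in(0,1)$ small enough that $2^{1/r}>2C^2$ (possible since $2^{1/r}\to\infty$ as $r\to0^+$, and this forces $r<1$ because $2C^2\geq2$). Then I would set
\[
\opnorm*{x}_{X}=\inf\left\{\Bigl(\sum_{j=1}^{N}\lVert x_j\rVert_X^{r}\Bigr)^{1/r}:\ N\geq1,\ x_1,\dots,x_N\in X,\ \sum_{j=1}^{N}x_j=x\right\}.
\]
Several assertions are then immediate. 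The trivial decomposition $x=x$ gives $\opnorm*{x}_{X}\leq\lVert x\rVert_X<\infty$, which is one half of~(1). Positive homogeneity follows from the bijection $(x_j)_j\mapsto(ax_j)_j$ between decompositions of $x$ and of $ax$. For~(2): given decompositions $x=\sum_{j=1}^Nx_j$ and $y=\sum_{l=1}^{N'}y_l$, their concatenation decomposes $x+y$, so $\opnorm*{x+y}_{X}^{r}\leq\sum_j\lVert x_j\rVert_X^r+\sum_l\lVert y_l\rVert_X^r$; passing to the infimum first over the $x_j$ and then over the $y_l$ (legitimate since $t\mapsto t^{1/r}$ is increasing) gives $\opnorm*{x+y}_{X}^{r}\leq\opnorm*{x}_{X}^{r}+\opnorm*{y}_{X}^{r}$, which is precisely~(2); it also shows $\opnorm*{\cdot}_{X}$ is subadditive up to the constant $2^{1/r}$.

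The whole theorem then reduces to the second half of~(1), namely to exhibiting a finite constant $K=K(C,r)\geq1$ with
\[
\lVert x\rVert_X\leq K\Bigl(\sum_{j=1}^{N}\lVert x_j\rVert_X^{r}\Bigr)^{1/r}\qquad\text{for every finite decomposition }x=\sum_{j=1}^{N}x_j;
\]
taking the infimum yields $\lVert x\rVert_X\leq K\opnorm*{x}_{X}$, which completes~(1) with $C_0=K$ and also forces $\opnorm*{x}_{X}=0\Rightarrow x=0$, so that $\opnorm*{\cdot}_{X}$ is indeed a quasinorm. To prove this estimate I would normalise $S:=\sum_{j=1}^N\lVert x_j\rVert_X^{r}$, relabel so that $\lVert x_1\rVert_X\geq\dots\geq\lVert x_N\rVert_X$ (hence $\lVert x_j\rVert_X\leq(S/j)^{1/r}$), and group the indices into the dyadic blocks $B_k=\{\,j:2^k\leq j<2^{k+1}\,\}$ for $k=0,\dots,M$, $M=\lfloor\log_2 N\rfloor$. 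Each $y_k:=\sum_{j\in B_k}x_j$ is a sum of at most $2^k$ vectors of norm at most $(S/2^k)^{1/r}$, so iterating the quasi-triangle inequality along a balanced binary tree with $2^k$ leaves (padding with zeros) gives $\lVert y_k\rVert_X\leq(2C)^k(S/2^k)^{1/r}=S^{1/r}\rho^k$ with $\rho:=2C\cdot2^{-1/r}$, and $\rho<1$ since $2C\leq2C^2<2^{1/r}$. Finally I would reassemble $x=\sum_{k=0}^M y_k$ by adding the blocks one at a time, each step inflating the bound by one factor $C$, to obtain
\[
\lVert x\rVert_X\leq\sum_{k=0}^{M}C^{k+1}\lVert y_k\rVert_X\leq CS^{1/r}\sum_{k\geq0}(C\rho)^k=\frac{C}{1-C\rho}\,S^{1/r},
\]
the geometric series converging precisely because $C\rho=2C^2\cdot2^{-1/r}<1$; so $K=C/(1-C\rho)$ does the job.

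The step I expect to be the crux is this last reassembly. If one uses the borderline exponent $r=\log2/\log(2C)$ then $\rho=1$, the block bounds $\lVert y_k\rVert_X\leq S^{1/r}$ carry no decay in $k$, and naively merging the $\sim\log_2 N$ blocks introduces a spurious factor of order $(\log N)^{1/r}$; taking $r$ strictly below the borderline is exactly what makes the block norms decay geometrically so that the telescoping tail-merge can absorb all of them into a single $N$-independent constant. This is the one place where the quantitative relationship between $C$ and $r$ is genuinely used. Everything else---well-definedness and finiteness of $\opnorm*{\cdot}_{X}$, positive homogeneity, non-degeneracy, $r$-subadditivity---is the routine infimum bookkeeping sketched above; sharper versions of the argument, using the borderline exponent together with a more careful grouping, can be found in the references cited just before the statement.
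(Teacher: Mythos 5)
Your proof is correct, but there is nothing in the paper to compare it against: this statement is the classical Aoki--Rolewicz theorem, which the paper quotes without proof, pointing only to the references given just before the statement. Judged on its own merits, your construction is the standard one (the infimum of $\bigl(\sum_j\lVert x_j\rVert_X^r\bigr)^{1/r}$ over finite decompositions $x=\sum_j x_j$), and all the steps check out: the trivial decomposition gives $\opnorm*{x}_X\le\lVert x\rVert_X$, concatenation of decompositions gives the $r$-subadditivity, the ordering trick $\lVert x_j\rVert_X\le (S/j)^{1/r}$ and the balanced-tree estimate $\lVert y_k\rVert_X\le (2C)^k(S/2^k)^{1/r}=S^{1/r}\rho^k$ are right, and the final reassembly $\lVert x\rVert_X\le\sum_{k}C^{k+1}\lVert y_k\rVert_X\le CS^{1/r}/(1-C\rho)$ is legitimate precisely because your choice $2^{1/r}>2C^2$ makes $C\rho=2C^2\,2^{-1/r}<1$; this yields the lower bound $\opnorm*{x}_X\ge K^{-1}\lVert x\rVert_X$, hence also the non-degeneracy needed for $\opnorm*{\cdot}_X$ to be a quasinorm. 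The difference from the textbook treatments is that they work at the critical exponent $r$ determined by $2C=2^{1/r}$, which requires a more delicate grouping lemma, whereas you deliberately take $r$ strictly subcritical so that the dyadic blocks decay geometrically and the merge costs only a convergent geometric series; since the theorem as stated only asks for \emph{some} $r\in(0,1]$ and some constant $C_0$, this loss of sharpness in $r$ is immaterial, and your argument is a complete, self-contained proof of the statement.
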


The direct consequence of this result is that every quasinorm defines a metrizable topology on $X$ and that the convergence in said topology is equivalent to the convergence with respect to the original quasinorm, in the sense that $x_n \rightarrow x$ in the induced topology if and only if $\lim_{n \rightarrow \infty} \lVert x_n - x \rVert = 0$.

Natural question to ask is when do different quasinorms define equivalent topologies. It is an easy exercise to show that the answer is the same as in the case of norms, that is that two quasinorms are topologically equivalent if and only if they are equivalent in the following sense.

\begin{definition}
	Let $\lVert \cdot \rVert_X$ and  $\opnorm*{\cdot}_{X}$ be quasinorms over the linear space $X$. We say that $\lVert \cdot \rVert_X$ and  $\opnorm*{\cdot}_{X}$ are equivalent if there is some $C_0 > 0$ such that it holds for all $x \in X$ that
	\begin{equation*}
		C_0^{-1} \lVert x \rVert_X \leq \opnorm*{\cdot}_{X} \leq C_0 \lVert x \rVert_X.
	\end{equation*}
\end{definition}

To conclude this part, we recall the concepts of sum and intersection of normed spaces.

\begin{definition}
	Let $X$ and $Y$ be normed linear spaces equipped with the norms $\lVert \cdot \rVert_X$ and $\lVert \cdot \rVert_Y$ respectively. Suppose that there is a Hausdorff topological linear space $Z$ into which $X$ and $Y$ are continuously embedded. We then define the spaces $X+Y$ and $X \cap Y$ as
	\begin{align*}
		X+Y &= \left \{z \in Z; \; \exists x \in X, \, \exists y \in Y : z = x +y  \right \}, \\
		X \cap Y &= \left \{z \in Z; \; z \in X, \, z \in Y  \right \},
	\end{align*}
	equipped with the norms
	\begin{align*}
		\lVert z \rVert_{X+Y} &= \inf \left \{\lVert x \rVert_X + \lVert y \rVert_Y; \; x \in X, \, y \in Y, \, x+y=z  \right \}, \\
		\lVert z \rVert_{X \cap Y} &= \max \left \{ \lVert z \rVert_X, \lVert z \rVert_Y \right \},
	\end{align*}
	respectively.
\end{definition}

The concepts presented above play a crucial role in the theory of interpolation. For further details, we refer the reader to \cite[Chapter~3]{BennettSharpley88}, where one can also find the following result (as \cite[Chapter~3, Theorem~1.3]{BennettSharpley88}).

\begin{theorem}
	Let $X$ and $Y$ be as above. Then $X+Y$ and $X \cap Y$, when equipped with their respective norms, are normed linear spaces. Furthermore, if $X$ and $Y$ are Banach spaces, then so are $X+Y$ and $X \cap Y$.
\end{theorem}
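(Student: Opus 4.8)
The plan is to verify the defining properties one at a time, disposing first of the easy algebraic points and of $X\cap Y$, and only then turning to the one genuinely substantive issue, the completeness of $X+Y$. First I would observe that $X+Y$ and $X\cap Y$ are linear subspaces of $Z$: this is immediate, since $X$ and $Y$ are themselves linear subspaces of $Z$ and the defining conditions ``$z=x+y$ for some $x\in X,\,y\in Y$'' and ``$z\in X$ and $z\in Y$'' are preserved under addition and scalar multiplication. Positive homogeneity of both functionals is equally routine: for $\lVert\cdot\rVert_{X\cap Y}$ it follows at once from that of $\lVert\cdot\rVert_X$ and $\lVert\cdot\rVert_Y$, while for $\lVert\cdot\rVert_{X+Y}$ one notes that $z=x+y$ if and only if $az=(ax)+(ay)$ and uses homogeneity inside the infimum. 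Subadditivity of $\lVert\cdot\rVert_{X\cap Y}$ is clear; for $\lVert\cdot\rVert_{X+Y}$, given $\varepsilon>0$ and near-optimal decompositions $z_1=x_1+y_1$ and $z_2=x_2+y_2$, the decomposition $z_1+z_2=(x_1+x_2)+(y_1+y_2)$ together with the triangle inequalities in $X$ and $Y$ yields $\lVert z_1+z_2\rVert_{X+Y}\le\lVert z_1\rVert_{X+Y}+\lVert z_2\rVert_{X+Y}+2\varepsilon$, and letting $\varepsilon\to0$ finishes it.

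The one place where the hypotheses on $Z$ genuinely enter is the implication $\lVert z\rVert=0\Rightarrow z=0$. For $X\cap Y$ this is trivial, since $\lVert z\rVert_{X\cap Y}=0$ forces $z=0$ already in $X$, hence in $Z$. For $X+Y$, if $\lVert z\rVert_{X+Y}=0$ I would pick decompositions $z=x_n+y_n$ with $\lVert x_n\rVert_X+\lVert y_n\rVert_Y\to0$; then $x_n\to0$ in $X$ and $y_n\to0$ in $Y$, so by continuity of the embeddings $X\hookrightarrow Z$, $Y\hookrightarrow Z$ and of addition in $Z$, the sequence $x_n+y_n$, which is constantly equal to $z$, tends to $0$ in $Z$, whence $z=0$ because $Z$ is Hausdorff. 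This establishes that $X+Y$ and $X\cap Y$, with their respective norms, are normed linear spaces.

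For the completeness assertion, assume in addition that $X$ and $Y$ are Banach. For $X\cap Y$, a $\lVert\cdot\rVert_{X\cap Y}$-Cauchy sequence $(z_k)$ is Cauchy in both $X$ and $Y$ (each of those norms being dominated by $\lVert\cdot\rVert_{X\cap Y}$), hence converges to some $x\in X$ and some $y\in Y$; both $x$ and $y$ are limits of $(z_k)$ in $Z$, so $x=y=:z\in X\cap Y$ by Hausdorffness, and then $\lVert z_k-z\rVert_{X\cap Y}=\max\{\lVert z_k-z\rVert_X,\lVert z_k-z\rVert_Y\}\to0$. For $X+Y$ I would invoke the standard criterion that a normed space is complete if and only if every absolutely convergent series converges: given $(z_k)$ with $\sum_k\lVert z_k\rVert_{X+Y}<\infty$, choose decompositions $z_k=x_k+y_k$ with $\lVert x_k\rVert_X+\lVert y_k\rVert_Y\le\lVert z_k\rVert_{X+Y}+2^{-k}$, so that $\sum_k x_k$ and $\sum_k y_k$ converge in $X$ and in $Y$, to $x$ and $y$ respectively; setting $z=x+y$ one then estimates $\lVert z-\sum_{k=1}^{N}z_k\rVert_{X+Y}\le\lVert x-\sum_{k=1}^{N}x_k\rVert_X+\lVert y-\sum_{k=1}^{N}y_k\rVert_Y\to0$, as required.

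The main obstacle is the completeness of $X+Y$: unlike everything else, it is not a formal manipulation of the norm axioms, and the cleanest route is the absolutely-convergent-series characterization combined with a careful choice of decompositions whose $X$- and $Y$-components are themselves summable. The auxiliary point of matching up limits in the ambient space — which is exactly what makes the Hausdorff hypothesis on $Z$ indispensable — recurs here in the same form as in the $X\cap Y$ argument.
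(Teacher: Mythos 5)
Your proof is correct and is essentially the standard argument: the paper itself gives no proof here but cites \cite[Chapter~3, Theorem~1.3]{BennettSharpley88}, whose proof proceeds exactly along your lines, with the Hausdorff hypothesis on $Z$ used for definiteness of $\lVert \cdot \rVert_{X+Y}$ and for matching limits in $X \cap Y$, and completeness of $X+Y$ obtained via the absolutely convergent series criterion with near-optimal decompositions. Nothing is missing.
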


\subsection{Banach function norms and quasinorms}

We now turn our attention to the case in which we are interested the most, that is the case of norms and quasinorms acting on spaces of functions. The approach taken here is the same as in \cite[Chapter~1, Section~1]{BennettSharpley88}, which means that it differs, at least formally, from that in the previous part.

The major definitions are of course those of a Banach function norm and the corresponding Banach function space.

\begin{definition}
	Let $\lVert \cdot \rVert : M(R, \mu) \rightarrow [0, \infty]$ be a mapping satisfying $\lVert \, \lvert f \rvert \, \rVert = \lVert f \rVert$ for all $f \in M$. We say that $\lVert \cdot \rVert$ is a Banach function norm if its restriction to $M_+$ satisfies the following axioms:
	\begin{enumerate}[label=\textup{(P\arabic*)}, series=P]
		\item \label{P1} it is a norm, in the sense that it satisfies the following three conditions:
		\begin{enumerate}[ref=(\theenumii)]
			\item \label{P1a} it is positively homogeneous, i.e.\ $\forall a \in \mathbb{C} \; \forall f \in M_+ : \lVert a f \rVert = \lvert a \rvert \lVert f \rVert$,
			\item \label{P1b} it satisfies $\lVert f \rVert = 0 \Leftrightarrow f = 0$  $\mu$-a.e.,
			\item \label{P1c} it is subadditive, i.e.\ $\forall f,g \in M_+ \: : \: \lVert f+g \rVert \leq \lVert f \rVert + \lVert g \rVert$,
		\end{enumerate}
		\item \label{P2} it has the lattice property, i.e.\ if some $f, g \in M_+$ satisfy $f \leq g$ $\mu$-a.e., then also $\lVert f \rVert \leq \lVert g \rVert$,
		\item \label{P3} it has the Fatou property, i.e.\ if  some $f_n, f \in M_+$ satisfy $f_n \uparrow f$ $\mu$-a.e., then also $\lVert f_n \rVert \uparrow \lVert f \rVert $,
		\item \label{P4} $\lVert \chi_E \rVert < \infty$ for all $E \subseteq R$ satisfying $\mu(E) < \infty$,
		\item \label{P5} for every $E \subseteq R$ satisfying $\mu(E) < \infty$ there exists some finite constant $C_E$, dependent only on $E$, such that the inequality $ \int_E f \: d\mu \leq C_E \lVert f \rVert $ is true for all $f \in M_+$.
	\end{enumerate} 
\end{definition}

\begin{definition}
	Let $\lVert \cdot \rVert_X$ be a Banach function norm. We then define the corresponding Banach function space $X$ as the set
	\begin{equation*}
		X = \left \{ f \in M; \; \lVert f \rVert_X < \infty \right \}.
	\end{equation*}
\end{definition}

It is easy to see that a Banach function norm, when restricted to the space it defines, is indeed a norm in the sense of Definition~\ref{DN} and therefore Banach function spaces, when equipped with their defining norm, are normed linear spaces. Detailed study of these spaces can be found in \cite{BennettSharpley88}. 

Just as with general norms, the triangle inequality is sometimes too strong a condition to require. We therefore introduce the notions of quasi-Banach function norms and of the corresponding quasi-Banach function spaces.

\begin{definition}
	Let $\lVert \cdot \rVert : M(R, \mu) \rightarrow [0, \infty]$ be a mapping satisfying $\lVert \, \lvert f \rvert \, \rVert = \lVert f \rVert$ for all $f \in M$. We say that $\lVert \cdot \rVert$ is a quasi-Banach function norm if its restriction to $M_+$ satisfies the axioms \ref{P2}, \ref{P3} and \ref{P4} of Banach function norms together with a weaker version of axiom \ref{P1}, namely
	\begin{enumerate}[label=\textup{(Q\arabic*)}]
		\item \label{Q1} it is a quasinorm, in the sense that it satisfies the following three conditions:
		\begin{enumerate}[ref=(\theenumii)]
			\item \label{Q1a} it is positively homogeneous, i.e.\ $\forall a \in \mathbb{C} \; \forall f \in M_+ : \lVert af \rVert = \lvert a \rvert \lVert f \rVert$,
			\item \label{Q1b} it satisfies  $\lVert f \rVert = 0 \Leftrightarrow f = 0$ $\mu$-a.e.,
			\item \label{Q1c} there is a constant $C\geq 1$, called the modulus of concavity of $\lVert \cdot \rVert$, such that it is subadditive up to this constant, i.e.
			\begin{equation*}
				\forall f,g \in M_+ : \lVert f+g \rVert \leq C(\lVert f \rVert + \lVert g \rVert).
			\end{equation*}
		\end{enumerate}
	\end{enumerate}
\end{definition}

\begin{definition}
	Let $\lVert \cdot \rVert_X$ be a quasi-Banach function norm. We then define the corresponding quasi-Banach function space $X$ as the set
	\begin{equation*}
		X = \left \{ f \in M;  \; \lVert f \rVert_X < \infty \right \}.
	\end{equation*}
\end{definition}

As before, it is easy to see that a quasi-Banach function norm restricted to the space it defines is a quasinorm in the sense of Definition~\ref{DQ}. Let us now list here some of their important properties we will need later.

\begin{theorem} \label{TC}
	Let $\lVert \cdot \rVert_X$ be a quasi-Banach function norm and let $X$ be the corresponding quasi-Banach function space. Then $X$ is complete.
\end{theorem}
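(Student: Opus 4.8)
The plan is to mimic the classical argument that every Banach function space is complete (see \cite[Chapter~1, Theorem~1.6]{BennettSharpley88}), the only new point being that the triangle inequality is available merely up to the modulus of concavity $C$, which will be handled via the Aoki--Rolewicz theorem. First I would record that $\lVert\cdot\rVert_X$ is a genuine quasinorm on the linear space $X$ in the sense of Definition~\ref{DQ}: for $f,g\in X$ one has $\lVert f+g\rVert_X=\lVert\,|f+g|\,\rVert_X\le\lVert\,|f|+|g|\,\rVert_X\le C\big(\lVert f\rVert_X+\lVert g\rVert_X\big)$ by the defining identity $\lVert\,|h|\,\rVert_X=\lVert h\rVert_X$, the lattice property \ref{P2}, and \ref{Q1}, while positive homogeneity and non-degeneracy on $X$ follow at once from \ref{Q1}. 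By the Aoki--Rolewicz theorem there are therefore $r\in(0,1]$, a constant $C_0>0$, and an $r$-subadditive quasinorm $\opnorm*{\cdot}_X$ on $X$ equivalent to $\lVert\cdot\rVert_X$; in particular $X$ is metrizable (by the translation-invariant metric $(h_1,h_2)\mapsto\opnorm*{h_1-h_2}_X^r$), so it suffices to show that an arbitrary Cauchy sequence in $X$ has a subsequence converging in $X$.

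Given a Cauchy sequence $(f_n)$ in $X$, I would pass to a subsequence $(f_{n_k})$ with $\lVert f_{n_{k+1}}-f_{n_k}\rVert_X\le 2^{-k}$ for all $k$, and consider the nonnegative functions $g_N=\sum_{k=1}^{N}|f_{n_{k+1}}-f_{n_k}|$, which increase pointwise to $g=\sum_{k=1}^{\infty}|f_{n_{k+1}}-f_{n_k}|\in M_+$. Since each $g_N$ lies in $X$, the $r$-subadditivity of $\opnorm*{\cdot}_X$ together with the equivalence of the two quasinorms bounds $\opnorm*{g_N}_X$, and hence $\lVert g_N\rVert_X$, by a constant independent of $N$; the Fatou property \ref{P3} then yields $\lVert g\rVert_X<\infty$. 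Next I would note that $\lVert g\rVert_X<\infty$ forces $g$ to be finite $\mu$-a.e.: otherwise, by sigma-finiteness there is a set $A$ with $0<\mu(A)<\infty$ on which $g=\infty$, so $g\ge n\chi_A$ $\mu$-a.e.\ for every $n$, whence $\lVert g\rVert_X\ge n\lVert\chi_A\rVert_X$ by \ref{P2} and \ref{Q1}, with $\lVert\chi_A\rVert_X>0$ by \ref{Q1} --- a contradiction. Consequently $\sum_{k=1}^{\infty}(f_{n_{k+1}}-f_{n_k})$ converges absolutely $\mu$-a.e., so $f_{n_k}$ converges $\mu$-a.e.\ to some $f\in M$, and since $|f|\le|f_{n_1}|+g$ $\mu$-a.e.\ we obtain $f\in X$ from \ref{P2} and \ref{Q1}.

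It remains to show that $f_{n_k}\to f$ in $X$; the convergence of the whole sequence then follows from the standard fact that a Cauchy sequence with a convergent subsequence converges (using \ref{Q1} once more). For fixed $k$ I would set $h_k=\sum_{j=k}^{\infty}|f_{n_{j+1}}-f_{n_j}|$; the same computation as before, now applied to the tail partial sums and combined with \ref{P3}, gives $\lVert h_k\rVert_X\le C_0^{2}\big(\sum_{j=k}^{\infty}2^{-jr}\big)^{1/r}$, which tends to $0$ as $k\to\infty$. On the other hand, for $\mu$-a.e.\ $t$ one has $|f(t)-f_{n_k}(t)|=\lim_{N\to\infty}\big|\sum_{j=k}^{N}(f_{n_{j+1}}(t)-f_{n_j}(t))\big|\le h_k(t)$, so $\lVert f-f_{n_k}\rVert_X\le\lVert h_k\rVert_X\to 0$ by \ref{P2}, which completes the argument. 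The step I expect to require the most care is exactly this interplay between the missing triangle inequality and the need to sum countably many increments: the Fatou property \ref{P3} is the only limiting axiom at our disposal --- there is no absolute continuity of the quasinorm to fall back on --- so both the construction of $g$ and the estimate of the tails $h_k$ must be routed through monotone limits of finite partial sums, with the Aoki--Rolewicz $r$-subadditivity supplying the uniform bounds on those sums.
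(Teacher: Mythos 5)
Your argument is correct, but note that the paper itself does not prove Theorem~\ref{TC} at all: it is quoted as a known result, first obtained by Caetano, Gogatishvili and Opic in \cite{CaetanoGogatishvili16}, so there is no in-paper proof to compare against. Your proof is the standard Riesz--Fischer-type argument for that result: pass to a rapidly convergent subsequence, bound the increasing partial sums $g_N$ uniformly, use the Fatou property \ref{P3} to get $\lVert g\rVert_X<\infty$, deduce a.e.\ convergence from finiteness a.e.\ of elements of $X$, and control the tails $h_k$ by the same monotone-limit device. All the individual steps check out: the only axioms you use are \ref{Q1}, \ref{P2} and \ref{P3}, the Aoki--Rolewicz quasinorm is only ever evaluated on finite sums of elements of $X$ and you pass back to $\lVert\cdot\rVert_X$ (via the equivalence constant $C_0$) before invoking the lattice and Fatou properties, which is exactly the care required since $\opnorm*{\cdot}_X$ need not itself be a function norm. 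The one genuinely optional ingredient is Aoki--Rolewicz: the usual proof (as in \cite{CaetanoGogatishvili16}) dispenses with it by choosing the subsequence with $\lVert f_{n_{k+1}}-f_{n_k}\rVert_X\le (2C)^{-k}$, where $C$ is the modulus of concavity, and iterating the quasi-triangle inequality to bound $\lVert g_N\rVert_X\le\sum_k C^k(2C)^{-k}\le 1$; your route buys a cleaner summability bookkeeping (genuine $r$-subadditivity and geometric $2^{-k}$) at the cost of invoking an external renorming theorem, while the direct route is more elementary and self-contained. Either way the skeleton is the same and the proof is sound.
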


\begin{theorem} \label{TEQBFS}
	Let $\lVert \cdot \rVert_X$ and $\lVert \cdot \rVert_Y$ be quasi-Banach function norms and let $X$ and $Y$ be the corresponding quasi-Banach function spaces. If $X \subseteq Y$ then also $X \hookrightarrow Y$.
\end{theorem}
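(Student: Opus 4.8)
The plan is to argue directly by contradiction, avoiding any appeal to Baire category or to the closed graph theorem and relying only on axioms~\ref{Q1}, \ref{P2} and~\ref{P3}. This route seems natural here because quasi-Banach function norms need not satisfy axiom~\ref{P5}, so the usual shortcut ``convergence in norm forces convergence in measure on sets of finite measure'' is not available.

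Suppose $X \subseteq Y$ but $X \not\hookrightarrow Y$. Since the inclusion $I \colon X \to Y$ is linear, its failure to be continuous means that the ratio $\lVert f \rVert_Y / \lVert f \rVert_X$ is unbounded over $f \in X \setminus \{0\}$. Let $C_X \ge 1$ denote the modulus of concavity of $\lVert \cdot \rVert_X$. I would first rescale suitable witnesses, using positive homogeneity, to obtain functions $g_n \in X$, $n \in \mathbb{N}$, with
\[
\lVert g_n \rVert_X = (2 C_X)^{-n} \qquad \text{and} \qquad \lVert g_n \rVert_Y \ge n;
\]
here $\lVert g_n \rVert_Y < \infty$ because $g_n \in X \subseteq Y$.

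The crucial step is to put $h = \sum_{n \in \mathbb{N}} \lvert g_n \rvert \in M_+$ and to check that $h \in X$. Iterating the quasi-triangle inequality shows that the partial sums satisfy $\lVert \sum_{j=1}^{N} \lvert g_j \rvert \rVert_X \le \sum_{j=1}^{N} C_X^{\,j} \lVert g_j \rVert_X = \sum_{j=1}^{N} 2^{-j}$ for every $N$, so these quasinorms are bounded uniformly in $N$; since the partial sums increase pointwise to $h$, the Fatou property~\ref{P3} then yields $\lVert h \rVert_X < \infty$, that is, $h \in X$. The choice of exponents $(2C_X)^{-n}$ is made precisely so that the geometric growth of the constants produced by repeatedly applying the quasi-triangle inequality is absorbed; this is the one spot in the argument that calls for any care.

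To finish, the inclusion $X \subseteq Y$ gives $h \in Y$, hence $\lVert h \rVert_Y < \infty$. But $\lvert g_n \rvert \le h$ pointwise for every $n$, so the lattice property~\ref{P2} forces $\lVert g_n \rVert_Y \le \lVert h \rVert_Y$, whence $n \le \lVert h \rVert_Y$ for all $n \in \mathbb{N}$ --- a contradiction. Thus $X \hookrightarrow Y$. (One could instead invoke the closed graph theorem, since $X$ and $Y$ are complete by Theorem~\ref{TC} and carry metrizable linear topologies by the Aoki--Rolewicz theorem; but verifying that the graph of $I$ is closed still requires, in the absence of~\ref{P5}, a Fatou-based extraction of a $\mu$-a.e.\ convergent subsequence, so the direct argument above is the cleaner option.)
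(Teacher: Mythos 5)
Your argument is correct. Note that this paper does not actually prove Theorem~\ref{TEQBFS} -- it is quoted from \cite{NekvindaPesa20} -- but your proof is essentially the standard one for results of this type: normalize witnesses so that the geometric factor $(2C_X)^{-n}$ absorbs the constants $C_X^{j}$ produced by iterating the quasi-triangle inequality, use the Fatou property \ref{P3} to place $h=\sum_n\lvert g_n\rvert$ in $X$, and then use the lattice property \ref{P2} in $Y$ to contradict $\lVert g_n\rVert_Y\geq n$. All steps check out (the iterated estimate $\lVert\sum_{j\leq N}\lvert g_j\rvert\rVert_X\leq\sum_{j\leq N}C_X^{j}\lVert g_j\rVert_X$ is valid since $C_X\geq1$, and unboundedness of the ratio is indeed equivalent to failure of continuity by homogeneity); the only cosmetic point is the indexing, since this paper takes $0\in\mathbb{N}$, so either start the witnesses at $n=1$ or observe that the term $g_0$ is harmless.
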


Both of these results have been known for a long time in the context of Banach function spaces but they have been only recently extended to quasi-Banach function spaces. Theorem~\ref{TC} has been first obtained by Caetano, Gogatishvili and Opic in \cite{CaetanoGogatishvili16} while Theorem~\ref{TEQBFS} has been proved by Nekvinda and the author in \cite{NekvindaPesa20}. The following result has also been obtained in \cite{NekvindaPesa20}:

\begin{theorem} \label{TP5}
	Let $\lVert \cdot \rVert_X$ be a quasi-Banach function norm and let $X$ be the corresponding quasi-Banach function space. Suppose that $E \subseteq R$ is a set such that $\mu(E) < \infty$ and that for every constant $K \in (0, \infty)$ there is a non-negative function $f \in X$ satisfying
	\begin{equation*}
	\int_E \lvert f \rvert \: d\mu > K \lVert f \rVert_X.
	\end{equation*}
	Then there is a non-negative function $f_E \in X$ such that
	\begin{equation*} \label{TP5.1}
	\int_E f_E \: d\mu = \infty.
	\end{equation*}
\end{theorem}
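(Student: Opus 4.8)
The plan is a condensation-of-singularities argument: I would construct $f_E$ as an infinite series $\sum_{n=1}^{\infty} c_n g_n$ of normalised ``bad'' functions, with the coefficients $c_n$ tuned so that the series still converges in $X$ while $\int_E f_E \, d\mu$ diverges. First I fix an exponent $r \in (0,1]$ as provided by the Aoki--Rolewicz theorem applied to $\lVert \cdot \rVert_X$ on $X$, so that there is a quasinorm $\opnorm*{\cdot}_X$ on $X$, equivalent to $\lVert \cdot \rVert_X$ with some constant $C_0$, whose $r$-th power is subadditive. For each integer $n \geq 1$ I apply the hypothesis with the constant $K_n := 2^{n/r}$ to obtain a non-negative $f_n \in X$ with $\int_E f_n \, d\mu > K_n \lVert f_n \rVert_X$. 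We may assume $\int_E f_n \, d\mu < \infty$ for every $n$, since otherwise the conclusion already holds with $f_E = f_n$. Then $\int_E f_n \, d\mu$ is positive (it exceeds $K_n \lVert f_n \rVert_X \geq 0$), so $f_n \neq 0$, and hence axiom \ref{Q1} together with $f_n \in X$ forces $\lVert f_n \rVert_X \in (0, \infty)$; thus $g_n := f_n / \lVert f_n \rVert_X$ is a well-defined non-negative element of $X$ with $\lVert g_n \rVert_X = 1$ and $\int_E g_n \, d\mu > K_n$.

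Now set $c_n := 2^{-n/r}$ and $s_N := \sum_{n=1}^{N} c_n g_n \in X$. Using that $\opnorm*{\cdot}_X^r$ is subadditive, that $\opnorm*{\cdot}_X$ and $\lVert \cdot \rVert_X$ are equivalent, and that $\lVert g_n \rVert_X = 1$, one obtains a bound uniform in $N$:
\begin{equation*}
  \lVert s_N \rVert_X \leq C_0 \opnorm*{s_N}_X \leq C_0 \Bigl( \sum_{n=1}^{N} c_n^r \opnorm*{g_n}_X^r \Bigr)^{1/r} \leq C_0^2 \Bigl( \sum_{n=1}^{\infty} c_n^r \Bigr)^{1/r} = C_0^2 \Bigl( \sum_{n=1}^{\infty} 2^{-n} \Bigr)^{1/r} < \infty.
\end{equation*}
Define $f_E := \sum_{n=1}^{\infty} c_n g_n \in M_+$ as a pointwise sum; since the $g_n$ are non-negative, $s_N \uparrow f_E$ pointwise, so the Fatou property \ref{P3} gives $\lVert f_E \rVert_X = \lim_N \lVert s_N \rVert_X < \infty$, i.e.\ $f_E \in X$ (and $f_E$ is finite $\mu$-a.e., a standard consequence of \ref{P2}, \ref{P4} and axiom \ref{Q1}). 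Finally, by the monotone convergence theorem and the choice $c_n K_n = 1$,
\begin{equation*}
  \int_E f_E \, d\mu = \lim_{N \to \infty} \int_E s_N \, d\mu = \sum_{n=1}^{\infty} c_n \int_E g_n \, d\mu \geq \sum_{n=1}^{\infty} c_n K_n = \sum_{n=1}^{\infty} 1 = \infty,
\end{equation*}
as required. Alternatively one could obtain $f_E$ as the $X$-limit of the series via completeness, Theorem~\ref{TC}, and then use that the positive cone of $X$ is closed --- an easy consequence of \ref{P2} and axiom \ref{Q1} --- to see that $f_E \geq s_N$ $\mu$-a.e.\ for every $N$, whence $\int_E f_E \, d\mu \geq \int_E s_N \, d\mu \to \infty$.

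The one genuinely delicate point is the uniform bound on $\lVert s_N \rVert_X$: iterating the bare quasi-triangle inequality over $N$ summands produces a factor growing like a power of the modulus of concavity, which cannot be absorbed, so passing to the Aoki--Rolewicz $r$-norm is essential, and the decay rate $c_n = 2^{-n/r}$ must be matched to $r$ so as to keep $\sum_n c_n^r$ finite while $c_n K_n$ stays bounded away from $0$. Everything else --- the normalisation, the use of the Fatou property to place the series in $X$, and the monotone convergence theorem --- is routine.
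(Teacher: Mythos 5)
Your argument is correct as written: the normalisation of the bad functions $g_n=f_n/\lVert f_n\rVert_X$, the uniform bound $\lVert s_N\rVert_X\leq C_0^2\bigl(\sum_n 2^{-n}\bigr)^{1/r}$ obtained from the Aoki--Rolewicz $r$-subadditive quasinorm, the Fatou property \ref{P3} to place the pointwise sum $f_E$ in $X$, and monotone convergence with $c_nK_n=1$ to force $\int_E f_E\,d\mu=\infty$ all go through, and the degenerate cases ($\lVert f_n\rVert_X=0$ or $\int_E f_n\,d\mu=\infty$) are correctly dispatched. Note, however, that the present paper contains no proof of Theorem~\ref{TP5} to compare against: the result is quoted from \cite{NekvindaPesa20}, where it is established by essentially the same condensation-of-singularities scheme. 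One remark on your closing comment: the Aoki--Rolewicz step is convenient but not, in fact, essential. Since the modulus of concavity $C\geq 1$ of $\lVert\cdot\rVert_X$ is a fixed constant, iterating the quasi-triangle inequality gives $\bigl\lVert \sum_{n=1}^{N}x_n\bigr\rVert_X\leq\sum_{n=1}^{N}C^{n}\lVert x_n\rVert_X$, so choosing $K_n=(2C)^{n}$ and $c_n=(2C)^{-n}$ yields $\lVert s_N\rVert_X\leq\sum_{n=1}^{\infty}C^{n}(2C)^{-n}\leq 1$ uniformly in $N$, again with $c_nK_n=1$; the growing powers of $C$ are absorbed by weights matched geometrically to $C$ rather than to the Aoki--Rolewicz exponent $r$. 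This elementary variant avoids the renorming altogether, while your version is equally valid and perhaps cleaner when one already has the $r$-norm at hand.
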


The last result concerning Banach function spaces we want to list at this point concerns the properties of the intersection of two Banach function spaces. The proof is an easy exercise.

\begin{proposition} \label{PIBFS}
	Let $X$ and $Y$ be two Banach function spaces. Then $X \cap Y$ is also a Banach function space.
\end{proposition}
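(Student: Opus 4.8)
The plan is to verify directly that the functional defining $X \cap Y$ is a Banach function norm and that the Banach function space it generates is precisely $X \cap Y$. Denote by $\lVert \cdot \rVert_X$ and $\lVert \cdot \rVert_Y$ the Banach function norms defining $X$ and $Y$. Since $X, Y \subseteq M(R,\mu)$ and, by a classical fact stemming from axiom~\ref{P5} (see \cite{BennettSharpley88}), both embed continuously into $M_0(R,\mu)$ endowed with the topology of local convergence in measure, which is a Hausdorff topological linear space, we may take the latter as the ambient space $Z$ from the definition of $X \cap Y$; thus $X \cap Y$ together with its norm $\lVert f \rVert_{X \cap Y} = \max\{\lVert f \rVert_X, \lVert f \rVert_Y\}$ is well defined. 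I would then extend this functional to all of $M(R,\mu)$ by the same formula, now interpreting $\lVert \cdot \rVert_X$ and $\lVert \cdot \rVert_Y$ as $[0,\infty]$-valued functionals on $M$; the extension is consistent with the norm on $X \cap Y$ and clearly satisfies $\lVert\, \lvert f \rvert \,\rVert_{X \cap Y} = \lVert f \rVert_{X \cap Y}$ for all $f \in M$.

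Next I would check the axioms \ref{P1}--\ref{P5} for the restriction of $\lVert \cdot \rVert_{X \cap Y}$ to $M_+$. Positive homogeneity \ref{P1a}, the nondegeneracy condition \ref{P1b} and the lattice property \ref{P2} pass to the pointwise maximum immediately, as does the finiteness of $\lVert \chi_E \rVert_{X \cap Y}$ when $\mu(E) < \infty$ (axiom~\ref{P4}); for subadditivity \ref{P1c} one estimates, for $f, g \in M_+$,
\begin{equation*}
\lVert f+g \rVert_{X \cap Y} = \max\{\lVert f+g \rVert_X, \lVert f+g \rVert_Y\} \le \max\{\lVert f \rVert_X + \lVert g \rVert_X,\ \lVert f \rVert_Y + \lVert g \rVert_Y\} \le \lVert f \rVert_{X \cap Y} + \lVert g \rVert_{X \cap Y},
\end{equation*}
and for axiom~\ref{P5} it suffices to take $C_E$ to be the constant furnished for $E$ by $\lVert \cdot \rVert_X$, since then $\int_E f \: d\mu \le C_E \lVert f \rVert_X \le C_E \lVert f \rVert_{X \cap Y}$ for all $f \in M_+$. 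The only axiom calling for a separate (and minor) observation is the Fatou property~\ref{P3}: if $f_n \uparrow f$ $\mu$-a.e.\ in $M_+$, then $\lVert f_n \rVert_X \uparrow \lVert f \rVert_X$ and $\lVert f_n \rVert_Y \uparrow \lVert f \rVert_Y$, and one invokes the elementary fact that $a_n \uparrow a$ and $b_n \uparrow b$ in $[0,\infty]$ imply $\max\{a_n, b_n\} \uparrow \max\{a, b\}$ to deduce $\lVert f_n \rVert_{X \cap Y} \uparrow \lVert f \rVert_{X \cap Y}$.

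Finally, the Banach function space generated by the Banach function norm $\lVert \cdot \rVert_{X \cap Y}$ is $\{f \in M : \lVert f \rVert_{X \cap Y} < \infty\}$, and since $\max\{\lVert f \rVert_X, \lVert f \rVert_Y\} < \infty$ exactly when $\lVert f \rVert_X < \infty$ and $\lVert f \rVert_Y < \infty$, this set coincides with $X \cap Y$, with norm equal to the one from the definition; this completes the argument. There is no real obstacle here: the only points deserving even a little attention are the consistency of the extension of the intersection norm to all of $M(R,\mu)$, the verification that the abstract ambient-space hypothesis in the definition of $X \cap Y$ is met, and the small monotone-limit fact used for the Fatou property.
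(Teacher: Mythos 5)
Your proof is correct and is exactly the direct axiom-by-axiom verification that the paper intends when it dismisses this proposition as "an easy exercise": one extends $\max\{\lVert \cdot \rVert_X, \lVert \cdot \rVert_Y\}$ to all of $M$ and checks \ref{P1}--\ref{P5}, with the Fatou property handled by the monotone-limit fact you state. The additional remarks on the ambient space and the consistency of the extension are fine but not essential.
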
 

Let us now define an important property that a quasi-Banach function norm can have and that we will take a special interest in. Note that the class of quasi-Banach function norms contains that of Banach function norms so it is not necessary to provide separate definitions.

\begin{definition}
	Let $\lVert \cdot \rVert_X$ be a quasi-Banach function norm. We say that $\lVert \cdot \rVert_X$ is rearrangement-invariant, abbreviated r.i., if $\lVert f\rVert_X = \lVert g \rVert_X$ whenever $f, g \in M$ are equimeasurable (in the sense of Definition~\ref{DEM}).
	
	Furthermore, if the above condition holds, the corresponding space $X$ will be called rearrangement-invariant too.
\end{definition}

An important property of r.i.~quasi-Banach function spaces over $([0, \infty), \lambda)$ is that the dilation operator is bounded on those spaces, as stated in the following theorem. This is a classical result in the context of r.i.~Banach function spaces which has been recently extended to r.i.~quasi-Banach function spaces by by Nekvinda and the author in \cite{NekvindaPesa20} (for the classical version see for example \cite[Chapter~3, Proposition~5.11]{BennettSharpley88}).

\begin{definition} \label{DDO}
	Let $t \in (0, \infty)$. The dilation operator $D_t$ is defined on $M([0, \infty), \lambda)$ by the formula
	\begin{equation*}
	D_tf(s) = f(ts),
	\end{equation*}
	where $f \in M([0, \infty), \lambda)$, $s \in (0, \infty)$.
\end{definition}

\begin{theorem} \label{TDRIS}
	Let $X$ be an r.i.~quasi-Banach function space over $([0, \infty), \lambda)$ and let $t \in (0, \infty)$. Then $D_t: X \rightarrow X$ is a bounded operator.
\end{theorem}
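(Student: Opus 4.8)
The plan is to reduce to the action of $D_t$ on non-increasing functions and then treat $t\ge1$ and $0<t<1$ separately. First, since $\mu_{D_tf}(\sigma)=\lambda(\{s:|f(ts)|>\sigma\})=t^{-1}\mu_f(\sigma)$, passing to generalised inverses gives $(D_tf)^*=D_t(f^*)$; as $f$ and $f^*$ are equimeasurable and $\lVert\cdot\rVert_X$ is rearrangement-invariant, $\lVert D_tf\rVert_X=\lVert(D_tf)^*\rVert_X=\lVert D_t(f^*)\rVert_X$, so it suffices to bound $\lVert D_tg\rVert_X$ for non-increasing $g=f^*$. For $t\ge1$ this is immediate from the lattice property \ref{P2}: $ts\ge s$ forces $D_tg(s)=g(ts)\le g(s)$, hence $\lVert D_tg\rVert_X\le\lVert g\rVert_X$. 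For $0<t<1$, picking $k\in\mathbb N$ with $2^{-k}\le t$ gives $D_tg(s)=g(ts)\le g(2^{-k}s)=D_{2^{-k}}g(s)$, so by \ref{P2} it is enough to bound $D_{2^{-k}}$; since $D_{2^{-k}}=(D_{1/2})^k$, a non-increasing function stays non-increasing after each application of $D_{1/2}$, and boundedness is preserved under composition, everything reduces to proving that $D_{1/2}$ is bounded on non-increasing functions.

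The core estimate is thus $\lVert D_{1/2}g\rVert_X\le K\lVert g\rVert_X$ for non-increasing $g$, with $K$ depending only on the modulus of concavity. The classical route is not available: in the Banach case one uses the Lorentz--Luxemburg identity $\lVert\cdot\rVert_X=\lVert\cdot\rVert_{X''}$ together with duality against the associate space, transfers the dilation onto the test function by a change of variables, and uses $\lVert D_2\rVert_{X'\to X'}\le1$ — equivalently, one appeals to the Hardy--Littlewood--P\'olya principle via $D_{1/2}g\prec 2g$. For quasi-Banach function spaces the associate space may be trivial and the Hardy--Littlewood--P\'olya principle may fail (this is exactly one of the phenomena the paper exhibits), so a direct argument is needed. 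The plan is to exploit self-similarity of the dilation. With $a=\lambda(\{g>0\})$, decompose the support $(0,2a)$ of $D_{1/2}g$ dyadically as $\bigcup_{j\ge1}[2^{1-j}a,\,2^{2-j}a)$ (two-sided, $\bigcup_{j\in\mathbb Z}[2^j,2^{j+1})$, when $a=\infty$) and set $Q_j=D_{1/2}g\cdot\chi_{[2^{1-j}a,\,2^{2-j}a)}$, so that $D_{1/2}g=\sum_{j\ge1}Q_j$ with increasing partial sums. A direct computation shows $Q_j^*(s)=g\bigl((s+2^{1-j}a)/2\bigr)$ for $s\in[0,2^{1-j}a)$, and since $(s+c)/2\ge s$ for $s<c$ this yields $Q_j^*\le g$ pointwise, hence $Q_j\in X$. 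Replacing $\lVert\cdot\rVert_X$ by the equivalent $r$-subadditive quasinorm $\opnorm*{\cdot}_X$ furnished by the Aoki--Rolewicz theorem, one has $\opnorm*{\sum_{j\le n}Q_j}_X^r\le\sum_{j\le n}\opnorm*{Q_j}_X^r$; applying the Fatou property \ref{P3} of $\lVert\cdot\rVert_X$ to the increasing partial sums then reduces the whole theorem to showing $\sum_{j\ge1}\opnorm*{Q_j}_X^r<\infty$, with the sum controlled by $\opnorm*{g}_X^r$.

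The main obstacle is precisely this summability. The bound $Q_j^*\le g$ only gives $\opnorm*{Q_j}_X\le\opnorm*{g}_X$ for each $j$, which is useless when summed, and even the sharper majorisation $Q_j^*\le g\,\chi_{[0,\,2^{1-j}a)}$ need not have summable norms — for spaces without absolute continuity of the norm, or for weak-type spaces such as $L^{1,\infty}$ (on which $D_{1/2}$ is nonetheless trivially bounded), the norms $\opnorm*{g\,\chi_{[0,\varepsilon)}}_X$ do not decay. Overcoming this requires using more than the crude support and level estimates on the pieces $Q_j$: one must refine the decomposition so that it is adapted to the particular quasinorm, combine it with a value-layer decomposition $g=\sum_k\bigl(\min(g,2^{k+1})-\min(g,2^k)\bigr)$, and handle separately the part of $g$ carrying its ``$L^\infty$-end'', on which $D_{1/2}$ acts essentially isometrically on the $L^\infty$ scale. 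By contrast, the reductions of the first paragraph and the assembly via \ref{P3} and the Aoki--Rolewicz theorem are routine, so the entire difficulty is concentrated in this one estimate for $D_{1/2}$.
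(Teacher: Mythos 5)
Your first paragraph is fine: the identity $(D_tf)^*=D_t(f^*)$, the reduction to non-increasing $g$, the trivial case $t\ge1$ via \ref{P2}, and the reduction of $0<t<1$ to powers of $D_{1/2}$ are all correct and standard. But the theorem is not proved, because the one estimate everything was reduced to --- $\lVert D_{1/2}g\rVert_X\le K\lVert g\rVert_X$ for non-increasing $g$, with $K$ depending only on the modulus of concavity --- is never established. Worse, the route you set up cannot be completed: you reduce the theorem to the summability $\sum_{j}\opnorm*{Q_j}_X^r\lesssim\opnorm*{g}_X^r$ for the pieces $Q_j=D_{1/2}g\,\chi_{[2^{1-j}a,2^{2-j}a)}$, and this statement is simply false in general. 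For $X=L^\infty$ and $g=\chi_{[0,1]}$ one has $Q_j=\chi_{[2^{1-j},2^{2-j})}$, so every $\opnorm*{Q_j}_X$ is comparable to $1$ and the series diverges, while $\lVert D_{1/2}g\rVert_\infty=1$. You acknowledge this failure yourself, but the proposed remedy (``refine the decomposition so that it is adapted to the particular quasinorm, combine with a value-layer decomposition, treat the $L^\infty$-end separately'') is only named, not carried out, and it is precisely where the whole content of the theorem lies. (For comparison, the paper does not prove this statement either: it is quoted from \cite{NekvindaPesa20}, so your argument has to stand on its own.)

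The missing idea is that one should decompose $D_{1/2}g$ into \emph{finitely many} pieces, each of whose rearrangement is dominated pointwise by $g$; then only the quasi-triangle inequality with a fixed number of terms is needed, and the Aoki--Rolewicz exponent, the Fatou property and all summability questions disappear. One concrete way: let $A_i$, $i=1,\dots,4$, be the union over $k\in\mathbb{Z}$ of the $i$-th quarters of the dyadic intervals $[2^k,2^{k+1})$. Then for every $x>0$ one checks $\lambda\bigl(A_i\cap(0,x)\bigr)\le x/2$, and since $h=D_{1/2}g$ is non-increasing this yields $(h\chi_{A_i})^*(u)\le h(2u)=g(u)$ for all $u$, whence $\lVert h\chi_{A_i}\rVert_X\le\lVert g\rVert_X$ by rearrangement invariance and \ref{P2}; writing $h=\sum_{i=1}^4 h\chi_{A_i}$ and applying \ref{Q1c} three times gives $\lVert D_{1/2}g\rVert_X\le K(C)\lVert g\rVert_X$. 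Your infinite dyadic splitting fails exactly because its pieces occupy a \emph{full} dyadic block each (so their rearrangements are only $\le g\chi_{[0,2^{1-j}a)}$, whose norms need not decay), whereas interleaving the blocks so that each piece fills at most half of every initial segment $(0,x)$ converts the factor $2$ lost by the dilation into the factor gained by the thinning, which is the actual mechanism behind the boundedness of $D_{1/2}$.
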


Finally, we want to discuss here one property of some r.i.~quasi-Banach function norms that is often important in applications.

\begin{definition} \label{DHLP}
	Let $\lVert \cdot \rVert_X$ be an r.i.~quasi-Banach function norm. We say that the Hardy--Littlewood--P\'{o}lya principle holds for $\lVert \cdot \rVert_X$ if the estimate $\lVert f \rVert_X \leq \lVert g \rVert_X$ is true for any pair of functions $f, g \in M$ satisfying
	\begin{equation*}
	\int_0^{t} f^* \: d\lambda \leq \int_0^{t} g^* \: d\lambda 
	\end{equation*}
	for all $t \in (0, \infty)$.
\end{definition}

To put this property into the proper context we will need the following lemma:

\begin{lemma} \label{LHLP}
	Let $\lVert \cdot \rVert_X$ be an r.i.~quasi-Banach function norm and consider the following three statements:
	\begin{enumerate}
		\item \label{LHLPi}
		$\lVert \cdot \rVert_X$ is an r.i.~Banach function norm.
		\item \label{LHLPii}
		The Hardy--Littlewood--P\'{o}lya principle holds for $\lVert \cdot \rVert_X$.
		\item \label{LHLPiii}
		$\lVert \cdot \rVert_X$ satisfies \ref{P5}.
	\end{enumerate}
	Then \ref{LHLPi} implies \ref{LHLPii} which in turn implies \ref{LHLPiii}.	
\end{lemma}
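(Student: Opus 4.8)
The plan is to prove the two implications separately, both by essentially classical arguments adapted to the quasi-Banach setting. For \ref{LHLPi} $\Rightarrow$ \ref{LHLPii}, suppose $\lVert \cdot \rVert_X$ is an r.i.~Banach function norm and that $f, g \in M$ satisfy $\int_0^t f^* \: d\lambda \leq \int_0^t g^* \: d\lambda$ for all $t \in (0, \infty)$. The standard route is via the Calderón--Mityagin / Hardy--Littlewood--Pólya characterisation: this majorisation condition is equivalent to $f^*$ being in the "orbit" of $g^*$, and in particular (by the Hardy lemma together with an averaging argument over simple functions, or by the Luxemburg representation of r.i.~norms) one obtains the Hardy--Littlewood--Pólya relation. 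Concretely, I would invoke the fact that the majorisation hypothesis implies that $f^*$ can be written as an appropriate average of dilations/translations of $g^*$, or more directly use the known result that for an r.i.~Banach function norm the functional $\lVert h \rVert_X$ depends monotonically on $t \mapsto \int_0^t h^* \: d\lambda$. Since this last fact is genuinely classical (see \cite[Chapter~3]{BennettSharpley88}), the implication reduces to citing it; the only thing to check is that the hypotheses match, namely that $\lVert \cdot \rVert_X$ being an r.i.~Banach function norm is exactly the setting in which that result is stated.

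For \ref{LHLPii} $\Rightarrow$ \ref{LHLPiii}, suppose the Hardy--Littlewood--Pólya principle holds for $\lVert \cdot \rVert_X$ and let $E \subseteq R$ satisfy $\mu(E) < \infty$. I need to produce a finite constant $C_E$ with $\int_E f \: d\mu \leq C_E \lVert f \rVert_X$ for all $f \in M_+$. The key observation is that for $f \in M_+$, by the Hardy--Littlewood inequality (Theorem~\ref{THLI}) applied with $g = \chi_E$, we have
\begin{equation*}
	\int_E f \: d\mu = \int_R f \chi_E \: d\mu \leq \int_0^{\infty} f^* (\chi_E)^* \: d\lambda = \int_0^{\mu(E)} f^* \: d\lambda.
\end{equation*}
So it suffices to bound $\int_0^{\mu(E)} f^* \: d\lambda$ by a constant multiple of $\lVert f \rVert_X$. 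Now compare $f$ with a suitable fixed function: let $a = \mu(E)$ and consider the function $h$ on $([0,\infty), \lambda)$ (transported back to $R$ by rearrangement-invariance, using that $(R,\mu)$ can be handled via $f^*$) defined so that $\int_0^t h^* \: d\lambda = \min(t, a) \cdot \frac{1}{a}\int_0^a f^* \: d\lambda$ — i.e.\ take $h^* = \frac{1}{a}\left(\int_0^a f^* \: d\lambda\right)\chi_{(0,a)}$, which is the level function of $f^*$ on $(0,a)$. One checks $\int_0^t h^* \: d\lambda \leq \int_0^t f^* \: d\lambda$ for all $t$: on $(0,a)$ this is the elementary fact that the average of a non-increasing function dominates, and beyond $a$ both integrals have $h$ contributing nothing more while $f^*$ only adds. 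Hence by the Hardy--Littlewood--Pólya principle $\lVert h \rVert_X \leq \lVert f \rVert_X$, while $\lVert h \rVert_X = \frac{1}{a}\left(\int_0^a f^* \: d\lambda\right) \lVert \chi_{(0,a)} \rVert_X$ by homogeneity. Since $\lVert \chi_{(0,a)} \rVert_X$ is a positive finite number by \ref{P4} (which every quasi-Banach function norm satisfies) and by \ref{P1b}/\ref{Q1b} (to see it is nonzero), we get
\begin{equation*}
	\int_0^a f^* \: d\lambda = \frac{a}{\lVert \chi_{(0,a)} \rVert_X} \lVert h \rVert_X \leq \frac{a}{\lVert \chi_{(0,a)} \rVert_X} \lVert f \rVert_X,
\end{equation*}
so $C_E = \mu(E) / \lVert \chi_{(0,\mu(E))} \rVert_X$ works.

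The main obstacle, and the point requiring care, is the transport between the abstract measure space $(R,\mu)$ and the model space $([0,\infty),\lambda)$: the comparison function $h$ naturally lives on $([0,\infty),\lambda)$, whereas $f$ lives on $(R,\mu)$, and the Hardy--Littlewood--Pólya principle as stated in Definition~\ref{DHLP} compares functions via their non-increasing rearrangements, so this is really fine — one applies the principle to $f$ and to any function on $(R,\mu)$ equimeasurable with $h$ (which exists since $(R,\mu)$ is sigma-finite, possibly after noting the principle is stated for $f,g\in M$ on the same space; if $(R,\mu)$ is too small to support such an $h$, then $\mu(R) < \infty$ and one truncates $a$ to $\min(\mu(E),\mu(R))$, which only helps). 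I should also make sure that $\lVert \chi_{(0,a)}\rVert_X > 0$: this is immediate from \ref{Q1b} since $\chi_{(0,a)} \neq 0$ $\lambda$-a.e. A secondary subtlety is that the first implication's "classical result" should be cited precisely; if a self-contained argument is preferred, one proves it by writing, for $f, g$ satisfying the majorisation, $f^* = \lim_n T_n g^*$ where each $T_n$ is an average of dilation operators (bounded on $X$ by Theorem~\ref{TDRIS}) — but this uses the triangle inequality crucially, which is exactly why \ref{LHLPi} (Banach, not merely quasi-Banach) is the hypothesis, and is the conceptual heart of why the implication cannot be reversed in general.
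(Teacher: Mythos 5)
Your proof is correct, and while the first implication is handled exactly as in the paper (by citing the classical result that the Hardy--Littlewood--P\'{o}lya principle holds for r.i.~Banach function norms --- the precise reference is \cite[Chapter~2, Theorem~4.6]{BennettSharpley88}, not Chapter~3), your proof of \ref{LHLPii} $\Rightarrow$ \ref{LHLPiii} takes a genuinely different route. The paper argues by contradiction: if \ref{P5} fails, Theorem~\ref{TP5} yields $E$ with $\mu(E)<\infty$ and $f \in X$ with $\int_E \lvert f\rvert \, d\mu = \infty$, hence $\int_0^t f^*\,d\lambda = \infty$ for all $t$, and the Hardy--Littlewood--P\'{o}lya principle then forces \emph{every} measurable function (including $\infty\chi_R$) into $X$, contradicting the fact that quasi-Banach function spaces contain only a.e.~finite functions. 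You instead give a direct, quantitative argument: via the Hardy--Littlewood inequality (Theorem~\ref{THLI}) you reduce to bounding $\int_0^{a} f^*\,d\lambda$ with $a=\mu(E)$, compare $f$ with its level function $h^* = \bigl(\tfrac{1}{a}\int_0^a f^*\,d\lambda\bigr)\chi_{(0,a)}$ (realised on $(R,\mu)$ simply as the corresponding constant multiple of $\chi_E$ --- your worry about $(R,\mu)$ being ``too small'' is vacuous, since $E \subseteq R$ itself has measure $a$), and apply the principle of Definition~\ref{DHLP} to get the explicit constant $C_E = \mu(E)/\lVert \chi_E \rVert_X$. This buys an explicit constant and avoids Theorem~\ref{TP5} entirely, at the cost of being slightly longer; the paper's contradiction argument is shorter because it leans on that existing machinery. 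Two trivial cases should be mentioned for completeness in your version: $\mu(E)=0$ (where the claim is vacuous and your formula for $C_E$ degenerates), and $\int_0^{a} f^*\,d\lambda = \infty$ (where the same comparison, now with $h=\infty\chi_E$, together with the a.e.-finiteness of members of $X$, gives $\lVert f\rVert_X = \infty$, so the required inequality holds vacuously); neither affects the substance of the argument.
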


\begin{proof}
	That \ref{LHLPi} implies \ref{LHLPii}, i.e.~that the Hardy--Littlewood--P\'{o}lya principle holds for all r.i.~Banach function spaces is a well known result, see for example \cite[Chapter~2, Theorem~4.6]{BennettSharpley88}.
	
	The remaining implication will be proved by contradiction, so we assume that $\lVert \cdot \rVert_X$ does not satisfy \ref{P5}. Then it follows from Theorem~\ref{TP5} that there is a set $E \subseteq R$ and a function $f \in M$ such that $\mu(E)<\infty$, $\lVert f \rVert_X < \infty$, and
	\begin{equation*}
		\int_E \lvert f \rvert \: d\mu = \infty.
	\end{equation*}
	Hence $\mu(E) > 0$ and the Hardy--Littlewood inequality (Theorem~\ref{THLI}) implies that also
	\begin{equation*}
		\int_0^{\mu(E)} f^* \: d\lambda = \infty.
	\end{equation*}
	Since $f^*$ is non-increasing, we finally obtain the equality
	\begin{equation*}
		\int_0^{t} f^* \: d\lambda = \infty
	\end{equation*}
	for all $t \in (0, \infty)$.
	
	Because we assume that the Hardy--Littlewood--P\'{o}lya principle holds for $\lVert \cdot \rVert_X$, we conclude that every function $g \in M$ satisfies $\lVert g \rVert_X < \infty$. Since this includes $g = \infty \chi_R$ and since $\mu(R) \geq \mu(E) > 0$, we obtain a contradiction with the property that quasi-Banach function spaces contain only functions that are finite almost everywhere (see \cite[Lemma~2.4]{MizutaNekvinda15} or \cite[Theorem~3.4]{NekvindaPesa20}).
\end{proof}

\begin{remark} \label{RHLP}
	None of the implications in Lemma~\ref{LHLP} can be reversed. In the first case, we can show this by considering the functional $\lVert \cdot \rVert_X = \lVert \cdot \rVert_{L^{p,q}}$, where $L^{p,q}$ is the Lorentz space, since for the choice of parameters $p \in (1, \infty)$, $q \in (0,1)$, those functionals are not even equivalent to norms (see \cite[Theorem~2.5.8]{CarroRaposo07} and the references therein) while the Hardy--Littlewood--P\'{o}lya principle still holds for them (this follows from the boundedness of the Hardy--Littlewood maximal operator, see \cite[Theorem~4.1]{CarroPick00} and the references therein). The second case is more interesting and the question whether the reverse implication holds has previously been open. We provide a negative answer in Corollary~\ref{CHLP}.
\end{remark}

\subsection{Associate space}

An important concept in the theory of Banach function spaces and their generalisations is that of an associate space. The detailed study of associate spaces of Banach function spaces can be found in \cite[Chapter~1, Sections~2, 3, and 4]{BennettSharpley88}.

We will approach the issue in a slightly more general way. The very definition of an associate space requires no assumptions on the functional defining the original space.

\begin{definition} \label{DAS}
	Let $\lVert \cdot \rVert_X: M \to [0, \infty]$ be some non-negative functional and put
	\begin{equation*}
		X = \{ f \in M; \; \lVert f \rVert_X < \infty \}.
	\end{equation*} 
	Then the functional $\lVert \cdot \rVert_{X'}$ defined for $f \in M$ by 
	\begin{equation}
	\lVert f \rVert_{X'} = \sup_{g \in X} \frac{1}{\lVert g \rVert_X} \int_R \lvert f g \rvert \: d\mu, \label{DAS1}
	\end{equation}
	where we interpret $\frac{0}{0} = 0$ and $\frac{a}{0} = \infty$ for any $a>0$, will be called the associate functional of $\lVert \cdot \rVert_X$ while the set
	\begin{equation*}
		X' = \left \{ f \in M; \; \lVert f \rVert_{X'} < \infty \right \}
	\end{equation*}
	will be called the associate space of $X$.
\end{definition}

As suggested by the notation, we will be interested mainly in the case when $\lVert \cdot \rVert_X$ is at least a quasinorm, but we wanted to emphasize that such an assumption is not necessary for the definition. In fact, it is not even required for the following result, which is the Hölder inequality for associate spaces.

\begin{theorem} \label{THAS}
	Let $\lVert \cdot \rVert_X: M \to [0, \infty]$ be some non-negative functional and denote by $\lVert \cdot \rVert_{X'}$ its associate functional. Then it holds for all $f \in M$ that
	\begin{equation*}
		\int_R \lvert f g \rvert \: d\mu \leq \lVert g \rVert_X \lVert f \rVert_{X'}
	\end{equation*}
	provided that we interpret $0 \cdot \infty = -\infty \cdot \infty = \infty$ on the right-hand side.
\end{theorem}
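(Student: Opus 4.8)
The plan is simply to unwind the definition \eqref{DAS1} of the associate functional and run a short case distinction on the value of $\lVert g \rVert_X$; the only step carrying any content is the generic case $0 < \lVert g \rVert_X < \infty$, while the degenerate cases are dictated by the interpretative conventions built into Definition~\ref{DAS} and into the statement.

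If $\lVert g \rVert_X = \infty$, then the right-hand side equals $\infty$ (either because $\lVert f \rVert_{X'} > 0$, or, if $\lVert f \rVert_{X'} = 0$, by the stipulated reading of $0 \cdot \infty$), so there is nothing to prove. If $0 < \lVert g \rVert_X < \infty$, then $g$ is one of the admissible competitors in the supremum defining $\lVert f \rVert_{X'}$, so
\[
	\frac{1}{\lVert g \rVert_X} \int_R \lvert f g \rvert \, d\mu \leq \lVert f \rVert_{X'},
\]
and multiplying through by $\lVert g \rVert_X$ yields the asserted inequality; this stays correct, with both sides equal to $\infty$, when $\lVert f \rVert_{X'} = \infty$ or $\int_R \lvert fg \rvert \, d\mu = \infty$.

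The one case requiring a moment's thought is $g \in X$ with $\lVert g \rVert_X = 0$. If $\lVert f \rVert_{X'} = \infty$, the right-hand side is $0 \cdot \infty = \infty$ by convention. If $\lVert f \rVert_{X'} < \infty$, I claim $\int_R \lvert fg \rvert \, d\mu = 0$, so that both sides vanish: indeed, were this integral positive, the term corresponding to the choice $g$ in the supremum \eqref{DAS1} would equal $a/0 = \infty$ with $a = \int_R \lvert fg \rvert \, d\mu > 0$, forcing $\lVert f \rVert_{X'} = \infty$, a contradiction. Thus the main (and essentially only) obstacle is purely bookkeeping: checking that the conventions $a/0 = \infty$ in Definition~\ref{DAS} and $0 \cdot \infty = \infty$ in the statement interlock so that the degenerate values of $\lVert g \rVert_X$ cannot produce a counterexample; beyond this, nothing more than the defining property of a supremum is used.
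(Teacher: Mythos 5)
Your proof is correct and is exactly the definitional argument the paper has in mind: Theorem~\ref{THAS} is stated there without proof, being regarded as immediate from Definition~\ref{DAS}, and your case distinction on $\lVert g \rVert_X$ (with the generic case just using $g$ as a competitor in the supremum) together with the bookkeeping of the conventions is all that is required. The only subcase you do not enumerate explicitly is $X = \emptyset$ (so that $\lVert f \rVert_{X'} = \sup \emptyset = -\infty$) combined with $\lVert g \rVert_X = \infty$, but this is precisely what the convention $-\infty \cdot \infty = \infty$ in the statement is there to cover, so nothing is missing in substance.
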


The convention at the end of the preceding theorem is necessary because the lack of assumptions on $\lVert \cdot \rVert_X$ means that we allow some pathological cases that need to be taken care of. To be more specific, $0 \cdot \infty = \infty$ is necessary because we allow $\lVert g \rVert_X = 0$ even for non-zero $g$ while $-\infty \cdot \infty = \infty$ is needed because the set $X$ can be empty, in which case $\lVert f \rVert_{X'} = \sup \emptyset = -\infty$.

The last result we will present in this generality is the following proposition concerning embeddings. Although the proof is an easy modification of that in \cite[Chapter~2, Proposition~2.10]{BennettSharpley88} we provide it to show that it truly does not require any assumptions on the original functional.

\begin{proposition} \label{PEASG}
	Let $\lVert \cdot \rVert_X: M \to [0, \infty]$ and $\lVert \cdot \rVert_Y: M \to [0, \infty]$ be two non-negative functionals satisfying that there is a constant $C>0$ such that it holds for all $f \in M$ that
	\begin{equation*}
		\lVert f \rVert_X \leq C \lVert f \rVert_Y.
	\end{equation*}
	Then the associate functionals $\lVert \cdot \rVert_{X'}$ and $\lVert \cdot \rVert_{Y'}$ satisfy, with the same constant $C$,
	\begin{equation*}
		\lVert f \rVert_{Y'} \leq C \lVert f \rVert_{X'}
	\end{equation*}
	for all $f \in M$.
\end{proposition}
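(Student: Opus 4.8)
The plan is to reduce the claimed inequality to a term-by-term comparison inside the two suprema that define the associate functionals $\lVert \cdot \rVert_{X'}$ and $\lVert \cdot \rVert_{Y'}$; once that is set up, the whole argument comes down to an elementary case analysis dictated by the extended-arithmetic conventions $0/0 = 0$, $a/0 = \infty$ ($a>0$), and $\sup \emptyset = -\infty$ fixed in Definition~\ref{DAS}.

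First I would record the set-theoretic consequence of the hypothesis: if $\lVert g \rVert_Y < \infty$, then $\lVert g \rVert_X \le C \lVert g \rVert_Y < \infty$, so $g \in X$; hence $Y \subseteq X$. (If $Y = \emptyset$, then $\lVert f \rVert_{Y'} = \sup \emptyset = -\infty \le C \lVert f \rVert_{X'}$ and there is nothing to prove, so we may assume $Y \ne \emptyset$.) Next, fix $f \in M$ and an arbitrary $g \in Y$, and establish the pointwise estimate
\[
\frac{1}{\lVert g \rVert_Y} \int_R \lvert f g \rvert \, d\mu \;\le\; \frac{C}{\lVert g \rVert_X} \int_R \lvert f g \rvert \, d\mu .
\]
When $\lVert g \rVert_X, \lVert g \rVert_Y \in (0, \infty)$ this is simply $\lVert g \rVert_X \le C \lVert g \rVert_Y$ divided through. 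When $\lVert g \rVert_Y = 0$, the hypothesis forces $\lVert g \rVert_X \le C \cdot 0 = 0$ as well, and both sides coincide (being $0$ or $\infty$ together, according to whether $\int_R \lvert fg \rvert \, d\mu$ vanishes). When $\lVert g \rVert_X = 0$ but $\lVert g \rVert_Y > 0$, the right-hand side equals $0$ if $\int_R \lvert fg \rvert \, d\mu = 0$ and $\infty$ otherwise, so it dominates the (finite or infinite) left-hand side in either subcase. The remaining case $\lVert g \rVert_Y = \infty$ cannot occur since $g \in Y$.

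Finally, since $g \in Y \subseteq X$, the right-hand side of the displayed inequality is at most $C \lVert f \rVert_{X'}$ directly from the definition~\eqref{DAS1}. Taking the supremum over all $g \in Y$ on the left then gives $\lVert f \rVert_{Y'} \le C \lVert f \rVert_{X'}$, which is the assertion. I do not expect a genuine obstacle here; the only point requiring care is keeping the bookkeeping of the degenerate cases straight (empty $X$ or $Y$, and nonzero functions with vanishing $\lVert\cdot\rVert_X$ or $\lVert\cdot\rVert_Y$), which is exactly why it is worthwhile to state the proposition with no structural assumptions on $\lVert \cdot \rVert_X$ and $\lVert \cdot \rVert_Y$.
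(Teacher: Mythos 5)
Your proof is correct and is essentially the paper's own argument: deduce $Y \subseteq X$ from the hypothesis, compare the integrands term by term via $\lVert g \rVert_X \leq C \lVert g \rVert_Y$, and then enlarge the supremum from $Y$ to $X$. The only difference is that you spell out the degenerate cases ($Y = \emptyset$, vanishing norms) which the paper's chain of inequalities handles implicitly, and your bookkeeping of those cases under the conventions of Definition~\ref{DAS} is accurate.
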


\begin{proof}
	Our assumptions guarantee that $Y \subseteq X$ and therefore
	\begin{equation*}
		\begin{split}
			\lVert f \rVert_{Y'} &= \sup_{g \in Y} \frac{1}{\lVert g \rVert_Y} \int_R \lvert f g \rvert \: d\mu \\ 
			&\leq \sup_{g \in Y} \frac{C}{\lVert g \rVert_X} \int_R \lvert f g \rvert \: d\mu \\ 
			&\leq \sup_{g \in X} \frac{C}{\lVert g \rVert_X} \int_R \lvert f g \rvert \: d\mu \\
			&= C \lVert f \rVert_{X'}.
		\end{split}
	\end{equation*}
\end{proof}

Let us now turn our attention to the case when $\lVert \cdot \rVert_X$ is a quasi-Banach function norm. Note that in this case the definition of the associate functional does not change when one replaces the supremum in \eqref{DAS1} by one taken only over the unit sphere in $X$.

The following result, due to Gogatishvili and Soudsk{\'y} in \cite{GogatishviliSoudsky14}, shows that the conditions the original functional needs to satisfy in order for the associate functional to be a Banach function norm are quite mild. Specially, they are satisfied by any quasi-Banach function norm that satisfies the axiom \ref{P5}. This special case was observed earlier in \cite[Remark~2.3.(iii)]{EdmundsKerman00}.

\begin{theorem} \label{TFA}
	Let $\lVert \cdot \rVert_X : M \to [0, \infty]$ be a functional that satisfies the axioms \ref{P4} and \ref{P5} from the definition of Banach function norms and which also satisfies for all $f \in M$ that $\lVert f \rVert_X = \lVert \, \lvert f \rvert \, \rVert_X$. Then the functional $\lVert \cdot \rVert_{X'}$ is a Banach function norm. In addition, $\lVert \cdot \rVert_X$ is equivalent to a Banach function norm if and only if there is some constant $C \geq 1$ such that it holds for all $f \in M$ that
	\begin{equation} 
		\lVert f \rVert_{X''} \leq \lVert f \rVert_X \leq C \lVert f \rVert_{X''}, \label{TFA1}
	\end{equation}	
	where $\lVert \cdot \rVert_{X''}$ denotes the associate functional of $\lVert \cdot \rVert_{X'}$.
\end{theorem}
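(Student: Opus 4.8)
The plan is to split the statement into two parts. The first part — that $\lVert\cdot\rVert_{X'}$ is always a Banach function norm under the stated hypotheses — I would prove directly by verifying the five axioms \ref{P1}--\ref{P5}. The key observations: properties of the supremum defining $\lVert\cdot\rVert_{X'}$ immediately give positive homogeneity \ref{P1a}, the triangle inequality \ref{P1c} (a supremum of a sum is bounded by the sum of suprema), the lattice property \ref{P2}, and the Fatou property \ref{P3} (the latter follows from the monotone convergence theorem applied inside the integral, using $\lvert f_n g\rvert \uparrow \lvert f g\rvert$). That $\lVert f\rVert_{X'}=0$ forces $f=0$ $\mu$-a.e. uses axiom \ref{P4} for $\lVert\cdot\rVert_X$: on any set $E$ of finite measure, $\chi_E\in X$ with positive finite norm, so $\lVert f\rVert_{X'}\geq \frac{1}{\lVert\chi_E\rVert_X}\int_E\lvert f\rvert\,d\mu$, forcing $f=0$ a.e. on $E$, and then on all of $R$ by sigma-finiteness. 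For \ref{P4} applied to $\lVert\cdot\rVert_{X'}$: if $\mu(E)<\infty$, then for any $g\in X$ the Hölder-type bound $\int_E\lvert g\rvert\,d\mu \leq C_E\lVert g\rVert_X$ from axiom \ref{P5} of $\lVert\cdot\rVert_X$ gives precisely $\lVert\chi_E\rVert_{X'}\leq C_E<\infty$. For \ref{P5} applied to $\lVert\cdot\rVert_{X'}$: this is the Hölder inequality for associate spaces (Theorem~\ref{THAS}) combined with axiom \ref{P4} of $\lVert\cdot\rVert_X$, namely $\int_E\lvert f\rvert\,d\mu \leq \lVert\chi_E\rVert_X\lVert f\rVert_{X'}$.

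For the second part, the equivalence, one direction is immediate: if $\lVert\cdot\rVert_X$ is equivalent to a Banach function norm $\lVert\cdot\rVert_Z$, then applying Proposition~\ref{PEASG} twice (once to each inequality in the equivalence) shows $\lVert\cdot\rVert_{X'}$ is equivalent to $\lVert\cdot\rVert_{Z'}$, and then applying it again shows $\lVert\cdot\rVert_{X''}$ is equivalent to $\lVert\cdot\rVert_{Z''}$. Since $\lVert\cdot\rVert_Z$ is a Banach function norm, the classical principle of duality gives $\lVert\cdot\rVert_{Z''}=\lVert\cdot\rVert_Z$, so $\lVert\cdot\rVert_{X''}$ is equivalent to $\lVert\cdot\rVert_X$; one must then check the one-sided inequality $\lVert f\rVert_{X''}\leq\lVert f\rVert_X$ holds with constant exactly $1$, which follows directly from Theorem~\ref{THAS}: for any $g\in X'$ one has $\int_R\lvert fg\rvert\,d\mu\leq\lVert f\rVert_X\lVert g\rVert_{X'}$, and dividing by $\lVert g\rVert_{X'}$ and taking the supremum over $g\in X'$ yields $\lVert f\rVert_{X''}\leq\lVert f\rVert_X$. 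Conversely, if \eqref{TFA1} holds, then since $\lVert\cdot\rVert_{X''}$ is the associate functional of the functional $\lVert\cdot\rVert_{X'}$, which by the first part of the theorem is a genuine Banach function norm satisfying in particular \ref{P4} and \ref{P5}, the first part of the theorem (applied to $\lVert\cdot\rVert_{X'}$ in place of $\lVert\cdot\rVert_X$) tells us $\lVert\cdot\rVert_{X''}$ is itself a Banach function norm; hence \eqref{TFA1} exhibits $\lVert\cdot\rVert_X$ as equivalent to the Banach function norm $\lVert\cdot\rVert_{X''}$.

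The main obstacle I anticipate is bookkeeping rather than conceptual: one must be careful that the hypotheses \ref{P4} and \ref{P5} on $\lVert\cdot\rVert_X$ are exactly what is needed and are used in the right places (\ref{P4} of $X$ $\Rightarrow$ \ref{P5} of $X'$; \ref{P5} of $X$ $\Rightarrow$ \ref{P4} of $X'$), and that the definition of associate functional is insensitive to whether the supremum is over $X$, over the unit ball, or over the unit sphere, as noted just before the theorem statement — this last point is what licenses the clean manipulation of suprema in Proposition~\ref{PEASG} and in the duality argument. A secondary subtlety is ensuring that all the pathological conventions ($\frac{0}{0}=0$, $\frac{a}{0}=\infty$, $0\cdot\infty=\infty$) are consistently respected; but since we are now in the setting where $\lVert\cdot\rVert_X$ is at least a quasi-Banach function norm, $X$ is nonempty and contains functions of positive finite norm, so these pathologies do not actually arise and can be dispatched with a remark.
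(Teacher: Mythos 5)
The paper itself does not prove Theorem~\ref{TFA}; it imports the result from Gogatishvili and Soudsk{\'y} \cite{GogatishviliSoudsky14} (with the quasinorm special case attributed to \cite{EdmundsKerman00}), so there is no in-paper argument to compare yours against, and your proposal has to stand on its own --- which it does. The axiom-by-axiom verification for $\lVert\cdot\rVert_{X'}$ is correct, including the key bookkeeping that \ref{P5} for $\lVert\cdot\rVert_X$ gives \ref{P4} for $\lVert\cdot\rVert_{X'}$ and \ref{P4} for $\lVert\cdot\rVert_X$ gives \ref{P5} for $\lVert\cdot\rVert_{X'}$ via Theorem~\ref{THAS}; the ``only if'' direction via two applications of Proposition~\ref{PEASG} together with the Lorentz--Luxemburg identity of Theorem~\ref{TDAS}, sharpened to the constant-one inequality $\lVert f\rVert_{X''}\le\lVert f\rVert_X$ by H\"older (this is precisely Proposition~\ref{PESSAS}, which you reprove rather than cite --- appropriate, since the paper places it after Theorem~\ref{TFA}), is sound; and the ``if'' direction correctly bootstraps the first part, applied now to $\lVert\cdot\rVert_{X'}$, to see that $\lVert\cdot\rVert_{X''}$ is a Banach function norm. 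Two small touch-ups: first, your closing remark that ``we are now in the setting where $\lVert\cdot\rVert_X$ is at least a quasi-Banach function norm'' overstates the hypotheses --- Theorem~\ref{TFA} assumes only \ref{P4}, \ref{P5} and $\lVert f\rVert_X=\lVert\,\lvert f\rvert\,\rVert_X$, with no lattice or quasinorm axioms; the pathological conventions are nonetheless harmless because \ref{P5} alone forces $\lVert g\rVert_X=0\Rightarrow g=0$ $\mu$-a.e.\ (apply it to $\lvert g\rvert$ on sets of finite measure and use sigma-finiteness), so the convention $a/0=\infty$ is never triggered with a nonzero numerator. Second, for the same reason, the assertion in your verification of part \ref{P1b} of \ref{P1} that $\chi_E$ has positive norm when $0<\mu(E)<\infty$ should be justified by \ref{P5} with $f=\chi_E$ rather than taken for granted (though even without it, the convention $a/0=\infty$ would rescue that step).
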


Additionally, if $\lVert \cdot \rVert_X$ is a Banach function norm then \eqref{TFA1} holds with $C = 1$. This is a classical result of Lorenz and Luxemburg, proof of which can be found for example in \cite[Chapter~1, Theorem~2.7]{BennettSharpley88}.

\begin{theorem} \label{TDAS}
	Let $\lVert \cdot \rVert_X$ be a Banach function norm, then $\lVert \cdot \rVert_X = \lVert \cdot \rVert_{X''}$ where $\lVert \cdot \rVert_{X''}$ is the associate functional of $\lVert \cdot \rVert_{X'}$.
\end{theorem}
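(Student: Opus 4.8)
The plan is to establish the two inequalities $\lVert f\rVert_{X''}\le\lVert f\rVert_X$ and $\lVert f\rVert_X\le\lVert f\rVert_{X''}$ separately, for every $f\in M$. The first one is the cheap half and uses no Banach structure: by the Hölder inequality (Theorem~\ref{THAS}) we have $\int_R\lvert fg\rvert\,d\mu\le\lVert f\rVert_X\lVert g\rVert_{X'}$ for every $g$, so dividing by $\lVert g\rVert_{X'}$ and taking the supremum over $g\in X'$ gives $\lVert f\rVert_{X''}\le\lVert f\rVert_X$; the degenerate cases forced by the conventions $\tfrac{0}{0}=0$ and $\tfrac{a}{0}=\infty$ are harmless and can be sidestepped by recalling from Theorem~\ref{TFA} that $\lVert\cdot\rVert_{X'}$ is a genuine Banach function norm. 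It remains to prove $\lVert f\rVert_X\le\lVert f\rVert_{X''}$; since both functionals are unchanged on replacing $f$ by $\lvert f\rvert$ we may take $f\in M_+$, and we may assume $\lVert f\rVert_{X''}<\infty$, there being nothing to prove otherwise.

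Next I would reduce to functions that are bounded and supported on a set of finite measure. Writing $R=\bigcup_n R_n$ with $R_n\uparrow R$ and $\mu(R_n)<\infty$ and setting $f_n=\min(f,n)\chi_{R_n}$, we have $f_n\uparrow f$ $\mu$-a.e.; the Fatou property~\ref{P3} then gives $\lVert f_n\rVert_X\uparrow\lVert f\rVert_X$, whereas the monotonicity of the supremum defining $\lVert\cdot\rVert_{X''}$ gives $\lVert f_n\rVert_{X''}\le\lVert f\rVert_{X''}$. Hence it suffices to prove $\lVert h\rVert_X\le\lVert h\rVert_{X''}$ for every $h\in M_+$ that is bounded and supported on some $E$ with $\mu(E)<\infty$. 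For such $h$, axiom~\ref{P5} shows $\chi_E\in X'$, so $\lVert h\rVert_{X''}\ge\lVert\chi_E\rVert_{X'}^{-1}\int_E h\,d\mu$, which is positive unless $h=0$; thus we may assume $0<\lVert h\rVert_{X''}\le 1$ after rescaling.

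The heart of the argument is a Hahn--Banach separation performed in $L^1(E)$. Suppose, for contradiction, that $\lVert h\rVert_X>1$. The set $B=\{u\in L^1(E);\ \lVert u\rVert_X\le 1\}$ is convex (it is the unit ball of a normed space intersected with $L^1(E)$) and closed in $L^1(E)$: given $u_k\to u$ in $L^1(E)$ with $u_k\in B$, pass to an a.e.-convergent subsequence $u_{n_j}\to u$, observe $\inf_{j\ge k}\lvert u_{n_j}\rvert\uparrow\lvert u\rvert$ $\mu$-a.e., and combine the lattice property~\ref{P2} with the Fatou property~\ref{P3} to conclude $\lVert u\rVert_X\le 1$. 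Since $h\in L^1(E)\setminus B$, there is $g\in L^\infty(E)$, extended by zero so that $g\in M$, which separates $h$ from $B$; normalising, $\operatorname{Re}\int_R\overline{g}\,u\,d\mu\le 1$ for all $u\in B$ while $\operatorname{Re}\int_R\overline{g}\,h\,d\mu>1$. Because $B$ is invariant under multiplication by unimodular measurable functions, aligning phases pointwise upgrades the first estimate to $\int_R\lvert gu\rvert\,d\mu\le 1$ for all $u\in B$, and hence (using~\ref{P2} to restrict attention to functions supported in $E$) $\lVert g\rVert_{X'}\le 1$; in particular $g\in X'$ and $g\neq 0$. Consequently $\lVert h\rVert_{X''}\ge\lVert g\rVert_{X'}^{-1}\int_R\lvert hg\rvert\,d\mu\ge\int_R\lvert hg\rvert\,d\mu\ge\operatorname{Re}\int_R\overline{g}\,h\,d\mu>1$ (with $\tfrac{a}{0}=\infty$ covering $\lVert g\rVert_{X'}=0$), contradicting $\lVert h\rVert_{X''}\le 1$. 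Therefore $\lVert h\rVert_X\le 1$, and undoing the normalisation and the reductions yields $\lVert f\rVert_X\le\lVert f\rVert_{X''}$, as required.

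The step I expect to be the main obstacle is the separation itself together with the surrounding book-keeping: one must check that $B$ is genuinely closed in $L^1(E)$ (which is exactly where the Fatou property is indispensable), that the separating functional can be taken nonnegative without spoiling its bound on $B$, and that none of the degenerate values permitted by the conventions in Definition~\ref{DAS} can intrude. None of this is deep, and the whole statement is the classical Lorentz--Luxemburg theorem (see \cite[Chapter~1, Theorem~2.7]{BennettSharpley88}); it is also precisely the $C=1$ instance of~\eqref{TFA1}, already recorded in the remark following Theorem~\ref{TFA}.
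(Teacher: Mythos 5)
Your argument is correct. Note that the paper itself does not prove Theorem~\ref{TDAS} at all: it is invoked as the classical Lorentz--Luxemburg theorem with a citation to \cite[Chapter~1, Theorem~2.7]{BennettSharpley88}, and what you have written is essentially that standard proof — the easy half via the H\"{o}lder inequality (which is the paper's Proposition~\ref{PESSAS}), then the reduction by the Fatou property \ref{P3} to bounded functions supported on sets of finite measure, and the Hahn--Banach separation of the $L^1(E)$-closed convex set $B$ by an $L^\infty(E)$ functional, upgraded by phase alignment and the lattice property \ref{P2} to an element of $X'$ of norm at most one. The details you flag as delicate (closedness of $B$ via \ref{P2}+\ref{P3}, the role of \ref{P5} in getting $\chi_E\in X'$ and $v\chi_E\in L^1(E)$, and the normalisation of the separating functional using that $B$ is balanced) all check out; the remark about making the separating functional nonnegative is unnecessary, since the phase-alignment step already replaces $g$ by $\lvert g\rvert$ in effect.
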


Let us point out that even in the case when $\lVert \cdot \rVert_X$, satisfying the assumptions of Theorem~\ref{TFA}, is not equivalent to any Banach function norm we still have one interesting embedding, as formalised in the following statement. The proof is an easy exercise.

\begin{proposition} \label{PESSAS}
	Let $\lVert \cdot \rVert_X$ satisfy the assumptions of Theorem~\ref{TFA}. Then it holds for all $f \in M$ that
	\begin{equation*}
	\lVert f \rVert_{X''} \leq  \lVert f \rVert_X,
	\end{equation*}
	where $\lVert \cdot \rVert_{X''}$ denotes the associate functional of $\lVert \cdot \rVert_{X'}$.
\end{proposition}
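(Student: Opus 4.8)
The plan is to obtain the estimate directly from the Hölder inequality for associate spaces (Theorem~\ref{THAS}), which, as emphasised there, requires no hypotheses whatsoever on the underlying functional; this is exactly the elementary half of the Lorentz--Luxemburg argument. The inequality is trivial when $\lVert f \rVert_X = \infty$, so one may as well concentrate on the case $\lVert f \rVert_X < \infty$.

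First I would invoke Theorem~\ref{TFA}: under the stated assumptions on $\lVert \cdot \rVert_X$ the associate functional $\lVert \cdot \rVert_{X'}$ is a genuine Banach function norm. Consequently $X'$ is a non-empty set, and $\lVert g \rVert_{X'} = 0$ only for $g = 0$; so in the defining formula
\[
\lVert f \rVert_{X''} = \sup_{g \in X'} \frac{1}{\lVert g \rVert_{X'}} \int_R \lvert f g \rvert \: d\mu
\]
the term attached to $g = 0$ contributes $0$ (via the convention $\frac{0}{0} = 0$), while for every non-zero $g \in X'$ one has $\lVert g \rVert_{X'} \in (0, \infty)$ and the corresponding term is an ordinary non-negative real number. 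Thus $\lVert f \rVert_{X''}$ is the supremum of a non-empty family of numbers in $[0, \infty)$.

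Next I would apply Theorem~\ref{THAS} to the pair $\lVert \cdot \rVert_X$, $\lVert \cdot \rVert_{X'}$, interchanging the roles of the two functions (legitimate since $\int_R \lvert f g \rvert \: d\mu$ is symmetric in $f$ and $g$): for every $g \in M$,
\[
\int_R \lvert f g \rvert \: d\mu \leq \lVert f \rVert_X \, \lVert g \rVert_{X'}.
\]
Restricting to non-zero $g \in X'$, dividing by $\lVert g \rVert_{X'} \in (0,\infty)$, and taking the supremum over all $g \in X'$ (the value at $g = 0$ being $0$, hence harmless) yields $\lVert f \rVert_{X''} \leq \lVert f \rVert_X$, which is the claim.

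There is no genuine obstacle here: the only step demanding a little care is keeping track of the degenerate values $g = 0$ and the a priori possibility that $\lVert g \rVert_{X'} = 0$ for some non-zero $g$, and both are disposed of the moment Theorem~\ref{TFA} guarantees that $\lVert \cdot \rVert_{X'}$ is an honest Banach function norm.
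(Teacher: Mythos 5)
Your argument is correct and is exactly the intended one: the paper leaves this proposition as an easy exercise, and the standard route is precisely to apply the H\"older inequality of Theorem~\ref{THAS} (with the roles of the two functions interchanged) to each $g \in X'$ in the defining supremum of $\lVert \cdot \rVert_{X''}$, using Theorem~\ref{TFA} only to dispose of the degenerate cases $g=0$ and $\lVert g \rVert_{X'}=0$. Nothing is missing.
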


We will also use in the paper the following version of Landau's resonance theorem. This result was first obtained in full generality in \cite{NekvindaPesa20}.

\begin{theorem} \label{LT}
	Let $\lVert \cdot \rVert_X$ be a quasi-Banach function norm, let $X$ be the corresponding quasi-Banach function space and let $\lVert \cdot \rVert_{X'}$ and $X'$, respectively, be the associate norm of  $\lVert \cdot \rVert_X$ and the corresponding associate space. Then a function $f \in M$ belongs to $X'$ if and only if it satisfies
	\begin{equation*}
	\int_R \lvert f g \rvert \: d\mu < \infty
	\end{equation*}
	for all $g \in X$.
\end{theorem}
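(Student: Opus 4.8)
The plan is to prove the two implications separately: one direction is immediate from the Hölder inequality for associate spaces, and the other is a resonance (uniform boundedness type) argument.

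For the easy direction, suppose $f \in X'$, so that $\lVert f \rVert_{X'} < \infty$. Given any $g \in X$ we have $\lVert g \rVert_X < \infty$, and Theorem~\ref{THAS} yields $\int_R \lvert fg \rvert \, d\mu \leq \lVert g \rVert_X \lVert f \rVert_{X'} < \infty$; the pathological conventions appearing in that theorem play no role here since both factors on the right-hand side are finite.

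For the converse I would argue by contraposition: assume $f \notin X'$, i.e.\ $\lVert f \rVert_{X'} = \infty$, and construct a single function $g \in X$ with $\int_R \lvert fg \rvert \, d\mu = \infty$. Since $\lVert \cdot \rVert_X$ is a quasi-Banach function norm it satisfies \ref{Q1}, so $\lVert g \rVert_X = 0$ forces $g = 0$ $\mu$-a.e.; hence the terms with $\lVert g \rVert_X = 0$ contribute only $0$ to the supremum \eqref{DAS1}, and $\lVert f \rVert_{X'} = \infty$ must be witnessed by a sequence $g_n \in X$ with $0 < \lVert g_n \rVert_X < \infty$ and $\lVert g_n \rVert_X^{-1} \int_R \lvert f g_n \rvert \, d\mu \to \infty$. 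Replacing $g_n$ by $\lvert g_n \rvert / \lVert g_n \rVert_X$ (which changes neither $\lVert g_n \rVert_X$ nor $\int_R \lvert f g_n \rvert \, d\mu$, as $\lVert \, \lvert g_n \rvert \, \rVert_X = \lVert g_n \rVert_X$) I may assume $g_n \in M_+$, $\lVert g_n \rVert_X = 1$, and $\int_R f g_n \, d\mu \to \infty$. Let $C \geq 1$ be the modulus of concavity of $\lVert \cdot \rVert_X$. A routine induction on $N$ using \ref{Q1c} gives $\lVert \sum_{n=1}^N h_n \rVert_X \leq \sum_{n=1}^N C^n \lVert h_n \rVert_X$ for all $h_1, \dots, h_N \in M_+$ (one could instead invoke the Aoki--Rolewicz theorem for a sharper bound), and combining this with the Fatou property \ref{P3} applied to partial sums yields $\lVert \sum_{n=1}^\infty \varepsilon_n h_n \rVert_X \leq \sum_{n=1}^\infty C^n \varepsilon_n \lVert h_n \rVert_X$ for any non-negative coefficients $\varepsilon_n$. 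Now pass to a subsequence, still denoted $(g_n)$, along which $\int_R f g_n \, d\mu \geq (4C)^n$, set $\varepsilon_n = (2C)^{-n}$, and define $g = \sum_{n=1}^\infty \varepsilon_n g_n$. Then $\lVert g \rVert_X \leq \sum_{n=1}^\infty C^n \varepsilon_n = \sum_{n=1}^\infty 2^{-n} < \infty$, so $g \in X$, while the monotone convergence theorem gives $\int_R \lvert f \rvert g \, d\mu = \sum_{n=1}^\infty \varepsilon_n \int_R f g_n \, d\mu \geq \sum_{n=1}^\infty 2^n = \infty$, which is the desired contradiction.

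The main obstacle is the point addressed in the construction above: because $\lVert \cdot \rVert_X$ is merely a quasinorm, the function $g$ assembled from the resonating sequence does not lie in $X$ for free, and one must exploit the geometric growth of the constants coming from the modulus of concavity (or from Aoki--Rolewicz) together with the Fatou property to keep $\lVert g \rVert_X$ finite. Everything else — the reduction to non-negative normalised $g_n$, the harmless degenerate conventions built into Definition~\ref{DAS}, and the interchange of sum and integral — is routine bookkeeping.
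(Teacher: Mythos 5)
Your proof is correct. Note that the paper itself does not prove Theorem~\ref{LT}; it only cites the reference where the result was first obtained in this generality, so there is no in-paper argument to compare against. Your argument is the standard resonance construction, correctly adapted to the quasinormed setting: the easy direction is exactly the H\"older inequality of Theorem~\ref{THAS}, and for the converse the essential point — that the series $\sum_n \varepsilon_n g_n$ must be kept inside $X$ despite the modulus of concavity $C$ — is handled properly by the iterated quasi-triangle inequality $\lVert \sum_{n=1}^N h_n \rVert_X \leq \sum_{n=1}^N C^n \lVert h_n \rVert_X$, the choice $\varepsilon_n = (2C)^{-n}$ against integrals of size $(4C)^n$, and the Fatou property \ref{P3} applied to the partial sums, with monotone convergence giving the divergent integral. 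Two cosmetic points: the case where some $g \in X$ already satisfies $\int_R \lvert fg \rvert \, d\mu = \infty$ should be dispatched explicitly (it yields the conclusion immediately, and only in its absence is the supremum witnessed by a sequence with finite ratios tending to infinity), and the quantities in your final estimate should be written as $\int_R \lvert f \rvert g_n \, d\mu$ rather than $\int_R f g_n \, d\mu$, since $f$ need not be non-negative; neither affects the validity of the argument.
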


To conclude this section, we observe that, provided the underlying measure space is resonant, the associate functional of an r.i.~quasi-Banach function norm can be expressed in terms of non-increasing rearrangement. The proof is the same as in \cite[Chapter~2, Proposition~4.2]{BennettSharpley88}.

\begin{proposition} \label{PAS}
	Let $\lVert \cdot \rVert_X$ be an r.i.~quasi-Banach function norm over a resonant measure space. Then its associate functional $\lVert \cdot \rVert_{X'}$ satisfies
	\begin{equation*}
		\lVert f \rVert_{X'} = \sup_{g \in X} \frac{1}{\lVert g \rVert_{X}} \int_0^{\infty} f^* g^* \: d\lambda.
	\end{equation*}
\end{proposition}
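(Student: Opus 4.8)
The plan is to prove the two inequalities between $\lVert f \rVert_{X'}$ and $\sup_{g \in X} \tfrac{1}{\lVert g \rVert_X} \int_0^{\infty} f^* g^* \,d\lambda$ separately. For the inequality ``$\leq$'' I would simply invoke the Hardy--Littlewood inequality (Theorem~\ref{THLI}): for every $g \in X$ with $\lVert g \rVert_X \neq 0$ one has $\int_R \lvert f g \rvert \,d\mu \leq \int_0^{\infty} f^* g^* \,d\lambda$, so dividing by $\lVert g \rVert_X$ and passing to the supremum over $g \in X$ yields the claim. The only point requiring care here is the degenerate case $\lVert g \rVert_X = 0$, which by axiom~\ref{Q1b} forces $g = 0$ $\mu$-a.e.; then both $\int_R \lvert fg \rvert \,d\mu$ and $\int_0^{\infty} f^*g^* \,d\lambda$ vanish, so under the convention $\tfrac{0}{0} = 0$ such $g$ contribute nothing to either supremum and may be discarded.

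For the reverse inequality ``$\geq$'', which is the substantive part, I would fix $g \in X$ with $\lVert g \rVert_X \neq 0$ and use that the measure space is resonant to write
\begin{equation*}
	\int_0^{\infty} f^* g^* \,d\lambda = \sup_{\substack{\tilde{g} \in M \\ \tilde{g}^* = g^*}} \int_R \lvert f \tilde{g} \rvert \,d\mu.
\end{equation*}
Any $\tilde{g}$ occurring here is equimeasurable with $g$, so rearrangement invariance of $\lVert \cdot \rVert_X$ gives $\lVert \tilde{g} \rVert_X = \lVert g \rVert_X < \infty$; hence $\tilde{g} \in X$ and $\tilde{g}$ is an admissible competitor in the defining supremum~\eqref{DAS1} for $\lVert f \rVert_{X'}$. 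Dividing the displayed identity by $\lVert g \rVert_X = \lVert \tilde{g} \rVert_X$ therefore gives
\begin{equation*}
	\frac{1}{\lVert g \rVert_X} \int_0^{\infty} f^* g^* \,d\lambda = \sup_{\substack{\tilde{g} \in M \\ \tilde{g}^* = g^*}} \frac{1}{\lVert \tilde{g} \rVert_X} \int_R \lvert f \tilde{g} \rvert \,d\mu \leq \lVert f \rVert_{X'},
\end{equation*}
and taking the supremum over $g \in X$ finishes the argument, the terms with $\lVert g \rVert_X = 0$ again being harmless.

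The main obstacle, such as it is, lies in the reverse inequality, and it rests entirely on the two structural hypotheses: resonance of $(R, \mu)$, which is precisely what lets us recover $\int_0^{\infty} f^* g^* \,d\lambda$ as a supremum of genuine integrals $\int_R \lvert f \tilde{g} \rvert \,d\mu$ over rearrangements $\tilde{g}$ of $g$, and rearrangement invariance of $\lVert \cdot \rVert_X$, which guarantees these rearrangements all lie in $X$ with the common norm $\lVert g \rVert_X$. No completeness or full Banach-function-norm structure is needed; the argument is the natural transcription of \cite[Chapter~2, Proposition~4.2]{BennettSharpley88} to the quasinorm setting, the only new wrinkle being the explicit bookkeeping for the degenerate case $\lVert g \rVert_X = 0$ permitted by Definition~\ref{DAS}.
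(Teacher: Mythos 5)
Your proposal is correct and follows exactly the route the paper intends: the paper itself only says the proof is the same as \cite[Chapter~2, Proposition~4.2]{BennettSharpley88}, i.e.\ Hardy--Littlewood (Theorem~\ref{THLI}) for one inequality and resonance plus rearrangement invariance for the other, which is precisely your argument. Your extra bookkeeping for the case $\lVert g \rVert_X = 0$ under the conventions of Definition~\ref{DAS} is a harmless and accurate addition.
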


An obvious consequence of Proposition~\ref{PAS} is that an associate space of an r.i.~quasi-Banach function space (over a resonant measure space) is also rearrangement-invariant.

\section{Wiener--Luxemburg amalgam spaces} \label{CHWLAS}

Throughout this section we restrict ourselves to the case when $(R, \mu) = ([0, \infty), \lambda)$. This allows us to make the proofs more elegant and less technical as well as ensures that the underlying measure space is resonant. Note that this comes at no loss of generality, since any r.i.~Banach function space over an arbitrary resonant measure space can be represented by some r.i.~Banach function space over $([0, \infty), \lambda)$, as follows from the classical Luxemburg representation theorem (see for example \cite[Chapter~2, Theorem~4.10]{BennettSharpley88}), and any r.i.~Banach function space over $([0, \infty), \lambda)$ represents an r.i.~Banach function space over any resonant measure space (see for example \cite[Chapter~2, Theorem~4.9]{BennettSharpley88}).

\subsection{Wiener--Luxemburg quasinorms}

\begin{definition} \label{DefWL}
	Let $\lVert \cdot \rVert_A$ and $\lVert \cdot \rVert_B$ be r.i.~Banach function norms. We then define the Wiener--Luxemburg quasinorm $\lVert \cdot \rVert_{WL(A, B)}$, for $f \in M$, by
	\begin{equation}
	\lVert f \rVert_{WL(A, B)} = \lVert f^* \chi_{[0,1]} \rVert_A + \lVert f^* \chi_{(1, \infty)} \rVert_B \label{DefWLN}
	\end{equation}
	and the corresponding Wiener--Luxemburg amalgam space $WL(A, B)$ as
	\begin{equation*}
		WL(A, B) = \{f \in M; \; \lVert f \rVert_{WL(A, B)} < \infty \}.
	\end{equation*}
	
	Furthermore, we will call the first summand in \eqref{DefWLN} the local component of $\lVert \cdot \rVert_{WL(A, B)}$ while the second summand will be called the global component of $\lVert \cdot \rVert_{WL(A, B)}$.
\end{definition}

For the sake of brevity we will sometimes write just Wiener--Luxemburg amalgams instead of Wiener--Luxemburg amalgam spaces.

Let us at first note that this concept somewhat generalises the concept of the r.i.~Banach function spaces in the sense that every r.i.~Banach function space is, up to equivalence of the defining functionals, a Wiener--Luxemburg amalgam of itself.

\begin{remark} \label{RAS}
	Let $\lVert \cdot \rVert_A$ be an r.i.~Banach function norm. Then
	\begin{equation*}
		\lVert f \rVert_A \leq \lVert f \rVert_{WL(A, A)} \leq 2 \lVert f \rVert_A
	\end{equation*}
	for every $f \in M$.
	
	Consequently, it makes a good sense to talk about local and global components of arbitrary r.i.~Banach function norms. 
\end{remark}

The local component of an arbitrary r.i.~Banach function norm is worth separate attention. As shown in the following proposition, this component is itself an r.i.~Banach function norm and therefore any unpleasant behaviour of the Wiener--Luxemburg quasinorm must be caused by its global element. Second part of the proposition then illustrates the interesting fact that the global element of $L^{\infty}$ puts no additional condition on the size of a given function, in the sense that the spaces $WL(A,L^{\infty})$ consists of exactly those functions that are locally in $A$.

\begin{proposition} \label{PLC}
	Let $\lVert \cdot \rVert_A$ be an r.i.~Banach function norm. Then the functional
	\begin{equation*}
		f \mapsto \lVert f^* \chi_{[0,1]} \rVert_A
	\end{equation*}
	is also an r.i.~Banach function norm.
	
	Furthermore, there is a constant $C>0$ such that it holds for all $f \in M$ that
	\begin{equation} \label{PLCe}
	\lVert f^* \chi_{[0,1]} \rVert_A \leq \lVert f \rVert_{WL(A, L^{\infty})} \leq C \lVert f^* \chi_{[0,1]} \rVert_A.
	\end{equation}
\end{proposition}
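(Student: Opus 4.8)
The plan is to verify the two assertions separately. For the first, I would check the axioms (P1)--(P5) directly for the functional $N(f) := \lVert f^* \chi_{[0,1]} \rVert_A$ on $M_+$. The rearrangement-invariance is immediate: if $f,g$ are equimeasurable then $f^* = g^*$, so $N(f) = N(g)$; in particular $N(|f|) = N(f)$ since $f$ and $|f|$ are equimeasurable. For (P1): positive homogeneity follows from $(af)^* = |a| f^*$ and homogeneity of $\lVert\cdot\rVert_A$; the positivity condition (P1b) follows because $f^* \chi_{[0,1]} = 0$ a.e.\ forces $f^* \equiv 0$ on $[0,1]$, hence (since $f^*$ is non-increasing) $f^*$ is supported on a null set, i.e.\ $f = 0$ a.e.; subadditivity (P1c) uses the classical inequality $(f+g)^*(t) \le f^*(t/2) + g^*(t/2)$ together with the lattice property and boundedness of the dilation operator on $A$ (Theorem~\ref{TDRIS}), or more simply the subadditivity of $t \mapsto \int_0^t (f+g)^*$ via the Hardy--Littlewood--P\'olya principle (Lemma~\ref{LHLP}) applied to the r.i.~Banach function norm $\lVert\cdot\rVert_A$. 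The lattice property (P2) and Fatou property (P3) are inherited from $\lVert\cdot\rVert_A$ because $f \le g$ a.e.\ implies $f^* \le g^*$ everywhere, and $f_n \uparrow f$ a.e.\ implies $f_n^* \uparrow f^*$ everywhere. Axiom (P4) follows from $\chi_E^* = \chi_{[0,\mu(E))}$, so $\chi_E^* \chi_{[0,1]} = \chi_{[0,\min(\mu(E),1))} \le \chi_{[0,1]}$, whence $N(\chi_E) \le \lVert \chi_{[0,1]}\rVert_A < \infty$. For (P5), given $E$ with $\lambda(E) < \infty$, the Hardy--Littlewood inequality (Theorem~\ref{THLI}) and the fact that $f^*$ is non-increasing give $\int_E f \, d\lambda \le \int_0^{\lambda(E)} f^* \, d\lambda \le (1 + \lambda(E)) \int_0^{\min(\lambda(E),1)} f^* \, d\lambda \le (1+\lambda(E)) \int_0^1 f^*\chi_{[0,1]}\, d\lambda$, and then the validity of (P5) for $\lVert\cdot\rVert_A$ (restricted to the set $[0,1)$, which has finite measure) bounds $\int_0^1 f^*\chi_{[0,1]} \, d\lambda$ by a constant times $\lVert f^*\chi_{[0,1]}\rVert_A = N(f)$.

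For the second assertion, observe that $\lVert f \rVert_{WL(A,L^\infty)} = \lVert f^*\chi_{[0,1]}\rVert_A + \lVert f^*\chi_{(1,\infty)}\rVert_\infty = \lVert f^*\chi_{[0,1]}\rVert_A + f^*(1)$. The left inequality in \eqref{PLCe} is trivial since the second summand is non-negative. For the right inequality, the point is to absorb the term $f^*(1)$ into a constant multiple of $\lVert f^*\chi_{[0,1]}\rVert_A$. Since $f^*$ is non-increasing, $f^*(t) \ge f^*(1)$ for all $t \in [0,1]$, hence $f^* \chi_{[0,1]} \ge f^*(1)\chi_{[0,1]}$ pointwise, and the lattice property (P2) of $\lVert\cdot\rVert_A$ gives $\lVert f^*\chi_{[0,1]}\rVert_A \ge f^*(1) \lVert \chi_{[0,1]}\rVert_A$. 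By (P4), $c_A := \lVert\chi_{[0,1]}\rVert_A \in (0,\infty)$ (strict positivity from (P1b) since $\chi_{[0,1]} \ne 0$), so $f^*(1) \le c_A^{-1} \lVert f^*\chi_{[0,1]}\rVert_A$. Therefore
\begin{equation*}
\lVert f \rVert_{WL(A,L^\infty)} = \lVert f^*\chi_{[0,1]}\rVert_A + f^*(1) \le \bigl(1 + c_A^{-1}\bigr) \lVert f^*\chi_{[0,1]}\rVert_A,
\end{equation*}
which is \eqref{PLCe} with $C = 1 + c_A^{-1}$.

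The only genuinely delicate point is the subadditivity (P1c) of the local component, since the truncation by $\chi_{[0,1]}$ interacts non-trivially with rearrangement: $(f+g)^*$ need not be dominated by $f^* + g^*$. I expect to handle this via the Hardy--Littlewood--P\'olya route, which is available because $\lVert\cdot\rVert_A$ is assumed to be a \emph{Banach} function norm (so Lemma~\ref{LHLP}, part \ref{LHLPi}$\Rightarrow$\ref{LHLPii}, applies): one checks that $\int_0^t (f+g)^*\chi_{[0,1]} \, d\lambda \le \int_0^t (f^*\chi_{[0,1]} + g^*\chi_{[0,1]})^{**}\!\!$-type majorant is not quite what is needed, so more care is required. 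A cleaner alternative is to use $(f+g)^*(t) \le f^*(t/2) + g^*(t/2)$, note that the right-hand side restricted to $[0,1]$ is dominated by $(D_{1/2}(f^*\chi_{[0,1]}))(t) + (D_{1/2}(g^*\chi_{[0,1]}))(t)$ plus controlled tail contributions, and invoke Theorem~\ref{TDRIS} (boundedness of $D_{1/2}$ on $A$) — this yields subadditivity only up to a constant, i.e.\ the quasinorm triangle inequality, but combined with the general fact that an r.i.\ functional satisfying (P2),(P3),(P4),(P5) and a quasi-triangle inequality whose associate bidual recovers it is equivalent to a Banach function norm (Theorem~\ref{TFA}), one concludes it is \emph{equal} to an r.i.\ Banach function norm; since the statement only asserts it "is" an r.i.~Banach function norm, one should check the exact triangle inequality holds, which does follow from the pointwise bound $\int_0^s(f+g)^*\le\int_0^s f^*+\int_0^s g^*$ valid for all $s$ and the representation of $\lVert\cdot\rVert_A$ on non-increasing functions through its (Banach) associate norm via Theorem~\ref{TDAS}.
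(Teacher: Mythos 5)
Your plan is correct, and on the one genuinely delicate point -- the exact triangle inequality for the local component -- you arrive at a valid argument that differs from the paper's. The paper proves subadditivity by a hands-on duality computation: it expresses $\lVert(f+g)^*\chi_{[0,1]}\rVert_A$ through the second associate norm (Theorem~\ref{TDAS}, Proposition~\ref{PAS}), uses resonance to replace each test function $h^*\chi_{[0,1]}$ by an equimeasurable $\tilde h$ paired against $f+g$ itself, splits the integral, and converts back. You instead invoke the classical sub-majorization $\int_0^t(f+g)^*\,d\lambda\le\int_0^t f^*\,d\lambda+\int_0^t g^*\,d\lambda$ together with the Hardy--Littlewood--P\'olya principle, which Lemma~\ref{LHLP} supplies for r.i.~Banach function norms, applied to $(f+g)^*\chi_{[0,1]}$ versus $f^*\chi_{[0,1]}+g^*\chi_{[0,1]}$; since both of these functions are (a.e.\ equal to) their own non-increasing rearrangements, the required majorization is exactly the displayed inequality for $t\le1$ (and trivial for $t>1$), after which the triangle inequality of $\lVert\cdot\rVert_A$ finishes. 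Your closing hesitation (``more care is required'') is therefore unfounded, and the detour through the dilation operator -- which, as you note, only yields a quasi-triangle inequality -- is unnecessary. The two routes are close in spirit (the Hardy--Littlewood--P\'olya principle is itself proved by duality), but yours outsources the work to a quoted principle while the paper redoes the duality explicitly; both are complete. The remaining differences are harmless: for the second inequality in \eqref{PLCe} you absorb $f^*(1)$ via the lattice property and $\lVert\chi_{[0,1]}\rVert_A\in(0,\infty)$, whereas the paper uses $f^*(1)\le\int_0^1 f^*\,d\lambda$ together with \ref{P5} of $\lVert\cdot\rVert_A$; your verification of \ref{P5} for the local component is essentially the paper's.
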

\begin{proof}
	That the functional in question satisfies the axioms \ref{P2}, \ref{P3} and \ref{P4} as well as parts \ref{P1a} and \ref{P1b} of the axiom \ref{P1} is an easy consequence of the respective properties of $\lVert \cdot \rVert_A$ and the properties of non-increasing rearrangement. Furthermore, the rearrangement invariance is obvious.
	
	As for \ref{P5}, fix some set $E \subseteq [0, \infty)$ of finite measure. We may, without loss of generality, assume that $\lambda(E) > 1$, because otherwise the proof is similar but simpler. Then, by Hardy--Littlewood inequality (Theorem~\ref{THLI}), it holds for every $f \in M$ that
	\begin{equation*}
		\begin{split}
			\int_{E} f \: d\lambda &\leq \int_{0}^{\lambda(E)} f^* \: d\lambda = \int_0^{1} f^* \: d\lambda +
			\int_{1}^{\lambda(E)} f^* \: d\lambda \\
			&\leq \int_0^{1} f^* \: d\lambda + (\lambda(E) - 1) f^*(1) \leq \lambda(E) \int_0^{1} f^* \: d\lambda \leq
			\lambda(E) C_{[0,1]} \lVert f^*\chi_{[0,1]} \rVert_A,
		\end{split}
	\end{equation*}
	where $C_{[0,1]}$ is the constant from the property \ref{P5} of $\lVert \cdot \rVert_A$ for the set $[0,1]$.
	
	For the triangle inequality (part \ref{P1c} of axiom \ref{P1}) we employ the associate definition of~$\lVert \cdot \rVert_A$ (see Theorem~\ref{TDAS} and Proposition~\ref{PAS}) and the fact that $[0, \infty)$ is resonant to get for an arbitrary pair of functions $f, g \in M$ that
	\begin{equation*}
		\begin{split}
			\lVert (f+g)^* \chi_{[0,1]} \rVert_A &= \sup_{\lVert h \rVert_{A'} \leq 1} \int_0^{\infty} (f+g)^* \chi_{[0,1]}h^* \: d\lambda \\
			&= \sup_{\lVert h \rVert_{A'} \leq 1} \sup_{\tilde{h}^* = h^* \chi_{[0,1]}} \int_0^{\infty} (f+g) \tilde{h} \: d\lambda \\
			&\leq \sup_{\lVert h \rVert_{A'} \leq 1} \sup_{\tilde{h}^* = h^* \chi_{[0,1]}} \int_0^{\infty} f \tilde{h} \: d\lambda + \sup_{\lVert h \rVert_{A'} \leq 1} \sup_{\tilde{h}^* = h^* \chi_{[0,1]}} \int_0^{\infty} g \tilde{h} \: d\lambda \\
			&= \sup_{\lVert h \rVert_{A'} \leq 1} \int_0^{\infty} f^* \chi_{[0,1]}h^* \: d\lambda + \sup_{\lVert h \rVert_{A'} \leq 1} \int_0^{\infty} g^* \chi_{[0,1]}h^* \: d\lambda \\
			&= \lVert f^* \chi_{[0,1]} \rVert_A + \lVert g^* \chi_{[0,1]} \rVert_A.
		\end{split}
	\end{equation*}
	
	Thus we have shown that the functional in question is an r.i.~Banach function norm. It remains to show \eqref{PLCe}.
	
	The first inequality in \eqref{PLCe} is trivial. For the second estimate, it suffices to observe that
	\begin{equation*}
		\lVert f^* \chi_{(1, \infty)} \rVert_{L^{\infty}} = f^*(1) \leq \int_0^{1} f^* \: d\lambda \leq C_{[0,1]} \lVert f^*\chi_{[0,1]} \rVert_A, 
	\end{equation*}
	where $C_{[0,1]}$ is the constant from the property \ref{P5} of $\lVert \cdot \rVert_A$ for the set $[0,1]$.
\end{proof}

While the local component is an r.i.~Banach function norm, the global component is much less well behaved. Indeed, it is fairly easy to see that it cannot have the properties \ref{P1} and \ref{P5} (in \ref{P1} only part \ref{P1a} can possibly hold), because it cannot distinguish from zero any function that is supported on a set of measure less than one. Thus it makes no sense to consider it separately.

The following theorem shows that although Wiener--Luxemburg quasinorm needs not to be a norm, it satisfies all the remaining axioms of r.i.~Banach function norms. Note that this result is not redundant because in order to show that Wiener--Luxemburg amalgams are normable (see Corollary~\ref{CN}) we will use Theorem~\ref{TEQBFS} and Theorem~\ref{TFA} and it is thus necessary to establish first that Wiener--Luxemburg quasinorms are quasi-Banach function norms that satisfy the axiom \ref{P5}.

\begin{theorem} \label{TQN}
	The Wiener--Luxemburg quasinorms, as defined in Definition~\ref{DefWL}, are rearrangement-invariant quasi-Banach function norms and they also satisfy the axiom \ref{P5} from the definition of Banach function norms. Consequently, the corresponding Wiener--Luxemburg amalgam spaces are rearrangement-invariant quasi-Banach function spaces.
\end{theorem}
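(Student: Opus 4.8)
The plan is to verify, one at a time, the defining properties of a rearrangement-invariant quasi-Banach function norm for $\lVert\cdot\rVert_{WL(A,B)}$: the three conditions \ref{Q1a}, \ref{Q1b}, \ref{Q1c} constituting the quasinorm axiom \ref{Q1}, the axioms \ref{P2}, \ref{P3} and \ref{P4}, rearrangement invariance, and separately the axiom \ref{P5}. Throughout I would use freely the elementary properties of the non-increasing rearrangement: $\lvert f\rvert^*=f^*$, $(af)^*=\lvert a\rvert f^*$, the implication $f\le g$ $\lambda$-a.e.\ $\Rightarrow f^*\le g^*$, the implication $f_n\uparrow f$ $\lambda$-a.e.\ $\Rightarrow f_n^*\uparrow f^*$, and the estimate $(f+g)^*(t)\le f^*(t/2)+g^*(t/2)$.

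Most of the axioms are then immediate. The identity $\lVert\lvert f\rvert\rVert_{WL(A,B)}=\lVert f\rVert_{WL(A,B)}$ and rearrangement invariance follow from $\lvert f\rvert^*=f^*$ and from equimeasurable functions sharing the same rearrangement. Positive homogeneity \ref{Q1a} follows from $(af)^*=\lvert a\rvert f^*$ and the homogeneity of $\lVert\cdot\rVert_A$ and $\lVert\cdot\rVert_B$; for \ref{Q1b}, vanishing of $\lVert f\rVert_{WL(A,B)}$ forces $f^*\chi_{[0,1]}=0$ and $f^*\chi_{(1,\infty)}=0$ $\lambda$-a.e.\ (by \ref{Q1b} for $A$ and $B$), hence $f^*\equiv0$ and $f=0$ $\lambda$-a.e. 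The lattice property \ref{P2} and the Fatou property \ref{P3} reduce, via $f\le g\Rightarrow f^*\le g^*$ and $f_n\uparrow f\Rightarrow f_n^*\uparrow f^*$, to the corresponding properties of $\lVert\cdot\rVert_A$ and $\lVert\cdot\rVert_B$ applied to the pieces $(\cdot)\chi_{[0,1]}$ and $(\cdot)\chi_{(1,\infty)}$ (for \ref{P3} one also uses that sums of sequences increasing to limits increase to the sum of the limits). For \ref{P4}, if $\lambda(E)<\infty$ then $\chi_E^*=\chi_{[0,\lambda(E))}$, so the local piece is dominated by $\chi_{[0,1]}$ (finite $A$-norm by \ref{P4} for $A$) and the global piece by $\chi_{(1,\lambda(E))}$ (finite $B$-norm by \ref{P4} for $B$, zero if $\lambda(E)\le1$). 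Axiom \ref{P5} has essentially been recorded already in the proof of Proposition~\ref{PLC}: by the Hardy--Littlewood inequality (Theorem~\ref{THLI}) and monotonicity of $f^*$,
\begin{equation*}
\int_E f\,d\lambda\le\int_0^{\lambda(E)}f^*\,d\lambda\le\max\{1,\lambda(E)\}\int_0^1 f^*\,d\lambda\le\max\{1,\lambda(E)\}\,C_{[0,1]}\lVert f^*\chi_{[0,1]}\rVert_A\le\max\{1,\lambda(E)\}\,C_{[0,1]}\lVert f\rVert_{WL(A,B)},
\end{equation*}
where $C_{[0,1]}$ is the constant from \ref{P5} for $\lVert\cdot\rVert_A$ attached to the set $[0,1]$.

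The only genuinely non-trivial step is the quasi-triangle inequality \ref{Q1c}, and I expect it to be the crux. I would start from $(f+g)^*(s)\le f^*(s/2)+g^*(s/2)$ and estimate the two components separately. For the local component one has the pointwise bound $f^*(s/2)\chi_{[0,1]}(s)\le D_{1/2}(f^*\chi_{[0,1]})(s)$ (the dilate $D_{1/2}(f^*\chi_{[0,1]})$ equals $f^*(\cdot/2)$ on $[0,2]$), so \ref{P2}, the triangle inequality for $\lVert\cdot\rVert_A$, and the boundedness of $D_{1/2}$ on $A$ (Theorem~\ref{TDRIS}) give $\lVert(f+g)^*\chi_{[0,1]}\rVert_A\le\lVert D_{1/2}\rVert_{A\to A}\bigl(\lVert f^*\chi_{[0,1]}\rVert_A+\lVert g^*\chi_{[0,1]}\rVert_A\bigr)$. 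For the global component I would use the identity $f^*(s/2)\chi_{(1,\infty)}(s)=D_{1/2}(f^*\chi_{(1/2,\infty)})(s)$ together with the pointwise bound $f^*\chi_{(1/2,\infty)}\le f^*(1/2)\,\chi_{(1/2,1]}+f^*\chi_{(1,\infty)}$; since $f^*(1/2)\le2\int_0^{1/2}f^*\,d\lambda\le2\,C_{[0,1]}\lVert f^*\chi_{[0,1]}\rVert_A$ by \ref{P5} for $\lVert\cdot\rVert_A$ and $\lVert\chi_{(1/2,1]}\rVert_B<\infty$ by \ref{P4} for $B$, the boundedness of $D_{1/2}$ on $B$ (Theorem~\ref{TDRIS}) produces a finite constant $C_1$ with $\lVert(f+g)^*\chi_{(1,\infty)}\rVert_B\le C_1\bigl(\lVert f\rVert_{WL(A,B)}+\lVert g\rVert_{WL(A,B)}\bigr)$. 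Adding the two estimates yields \ref{Q1c} with modulus of concavity $\max\{\lVert D_{1/2}\rVert_{A\to A},C_1\}$.

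The point worth stressing about this last step is why the local component must be brought in to control the global one: the functional $f\mapsto\lVert f^*\chi_{(1,\infty)}\rVert_B$ is not subadditive up to any constant in isolation (it vanishes on $\chi_{[0,1]}$ and on $\chi_{[1,2]}$ but not on their sum), so the dilation step unavoidably leaves a residual term supported near the cut point $s=1$, and it is exactly the combination of the boundedness of the dilation operator on r.i.\ spaces with axiom \ref{P5} of the local space $A$ that absorbs it. Once \ref{Q1c} and the remaining axioms are established, $\lVert\cdot\rVert_{WL(A,B)}$ is a rearrangement-invariant quasi-Banach function norm satisfying \ref{P5}, whence $WL(A,B)$ is a rearrangement-invariant quasi-Banach function space (complete by Theorem~\ref{TC}), which is the asserted conclusion.
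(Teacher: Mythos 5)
Your proposal is correct and follows essentially the same route as the paper: the easy axioms are reduced to the corresponding properties of $\lVert\cdot\rVert_A$, $\lVert\cdot\rVert_B$ and of the rearrangement, \ref{P5} is obtained from the Hardy--Littlewood inequality, and the quasi-triangle inequality \ref{Q1c} is proved via $(f+g)^*\le D_{1/2}f^*+D_{1/2}g^*$, the boundedness of $D_{1/2}$ on r.i.\ spaces (Theorem~\ref{TDRIS}), and absorption of the residual term $f^*\chi_{(1/2,1]}$ into the local $A$-component. The only (harmless) deviations are cosmetic: your \ref{P5} estimate runs entirely through the local component as in Proposition~\ref{PLC}, whereas the paper splits $\int_0^{\lambda(E)}f^*$ and uses \ref{P5} of both $A$ and $B$, and you control $f^*(1/2)$ via \ref{P5} of $A$ while the paper uses the lattice property with the factor $\lVert\chi_{(0,1/2)}\rVert_A^{-1}$.
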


\begin{proof}
	The properties \ref{P2}, \ref{P3} and \ref{P4} as well as those from parts \ref{Q1a} and \ref{Q1b} of the axiom \ref{Q1} are easy consequences of the respective properties of $\lVert \cdot \rVert_A$ and $\lVert \cdot \rVert_B$ and the properties of non-increasing rearrangement. Furthermore, the rearrangement invariance is obvious.
	
	To show \ref{P5}, fix some set $E \subseteq [0, \infty)$ of finite measure. We may, without loss of generality, assume that $\lambda(E) > 1$, since otherwise the proof is similar but simpler. Then, by the Hardy--Littlewood inequality (Theorem~\ref{THLI}), it holds for every $f \in M_+$ that
	\begin{equation*}
		\begin{split}
			\int_{E} f \: d\lambda \leq \int_{0}^{\lambda(E)} f^* \: d\lambda &= \int_0^{1} f^* \: d\lambda + \int_1^{\lambda(E)} f^* \: d\lambda \\
			&\leq C_{[0,1]} \lVert f^* \chi_{[0,1]} \rVert_A + C_{(1, \lambda(E))} \lVert f^* \chi_{(1, \infty)} \rVert_B,
		\end{split}
	\end{equation*}
	where $C_{[0,1]}$ is the constant from the property \ref{P5} of $\lVert \cdot \rVert_A$ for the set $[0,1]$ and $C_{(1, \lambda(E))}$ is the constant from the same property of $\lVert \cdot \rVert_B$ for the set $(1, \lambda(E ))$.
	
	Finally, for the triangle inequality up to a multiplicative constant (part \ref{Q1c} of the axiom \ref{Q1}), consider the dilation operator $D_{\frac{1}{2}}$, as defined in Definition~\ref{DDO}, and use at first only the appropriate properties of non-increasing rearrangement and those of $\lVert \cdot \rVert_A$ and $\lVert \cdot \rVert_B$ to calculate
	\begin{align*}
		\lVert f+g \rVert_{WL(A, B)} &= \lVert (f+g)^* \chi_{[0,1]} \rVert_A + \lVert (f+g)^* \chi_{(1, \infty)} \rVert_B \\
		& \; \begin{aligned}
			\leq \lVert (D_{\frac{1}{2}} f^* + &D_{\frac{1}{2}} g^*) \chi_{[0,1]} \rVert_A \\ 
			&+ \lVert (D_{\frac{1}{2}} f^* + D_{\frac{1}{2}} g^*) \chi_{(1, \infty)} \rVert_B
		\end{aligned} \\
		& \; \begin{aligned}
			\leq \lVert D_{\frac{1}{2}} f^* &\chi_{[0,1]} \rVert_A + \lVert D_{\frac{1}{2}} g^* \chi_{[0,1]} \rVert_A \\
			&+ \lVert D_{\frac{1}{2}} f^* \chi_{(1, \infty)} \rVert_B + \lVert D_{\frac{1}{2}} g^* \chi_{(1, \infty)} \rVert_B,
		\end{aligned}
	\end{align*}
	which shows that it is sufficient to prove that there is some $C \in (0, \infty)$ such that
	\begin{equation*}
		\lVert D_{\frac{1}{2}} f^* \chi_{[0,1]} \rVert_A + \lVert D_{\frac{1}{2}} f^* \chi_{(1, \infty)} \rVert_B \leq C \lVert f \rVert_{WL(A, B)}
	\end{equation*}
	for all $f \in M_+$. Actually, it suffices to show
	\begin{equation} \label{TQN1}
	\lVert D_{\frac{1}{2}} f^* \chi_{(1, \infty)} \rVert_B \leq C \lVert f \rVert_{WL(A, B)},
	\end{equation}
	because $D_{\frac{1}{2}}$ is bounded on $A$ (by Theorem~\ref{TDRIS}), and thus
	\begin{equation*}
		\lVert D_{\frac{1}{2}} f^* \chi_{[0,1]} \rVert_A = \lVert D_{\frac{1}{2}}( f^* \chi_{[0,\frac{1}{2}]}) \rVert_A \leq \lVert D_{\frac{1}{2}} \rVert \lVert f^* \chi_{[0,\frac{1}{2}]} \rVert_A \leq \lVert D_{\frac{1}{2}} \rVert \lVert f^* \chi_{[0,1]} \rVert_A.
	\end{equation*}
	
	To show \eqref{TQN1}, fix some $f \in M_+$ and calculate
	\begin{equation*}
		\begin{split}
			\lVert D_{\frac{1}{2}} f^* \chi_{(1, \infty)} \rVert_B &= \lVert D_{\frac{1}{2}} (f^* \chi_{(\frac{1}{2}, \infty)}) \rVert_B \\
			&\leq \lVert D_{\frac{1}{2}} \rVert \lVert f^* \chi_{(\frac{1}{2}, \infty)} \rVert_B \\
			&\leq \lVert D_{\frac{1}{2}} \rVert (\lVert f^* \chi_{(1, \infty)} \rVert_B + \lVert f^* \chi_{(\frac{1}{2}, 1)} \rVert_B) \\
			&\leq \lVert D_{\frac{1}{2}} \rVert (\lVert f^* \chi_{(1, \infty)} \rVert_B + f^*(\tfrac{1}{2}) \lVert \chi_{(\frac{1}{2}, 1)} \rVert_B) \\
			&\leq \lVert D_{\frac{1}{2}} \rVert (\lVert f^* \chi_{(1, \infty)} \rVert_B + \lVert \chi_{(\frac{1}{2}, 1)} \rVert_B \lVert \chi_{(0, \frac{1}{2})} \rVert_A^{-1} \lVert f^*(\tfrac{1}{2}) \chi_{(0, \frac{1}{2})} \rVert_A   \\
			&\leq \lVert D_{\frac{1}{2}} \rVert (\lVert f^* \chi_{(1, \infty)} \rVert_B + \lVert \chi_{(\frac{1}{2}, 1)} \rVert_B \lVert \chi_{(0, \frac{1}{2})} \rVert_A^{-1} \lVert f^* \chi_{[0,1]} \rVert_A) \\
			&\leq \lVert D_{\frac{1}{2}} \rVert \max \{1, \lVert \chi_{(\frac{1}{2}, 1)} \rVert_B \lVert \chi_{(0, \frac{1}{2})} \rVert_A^{-1}\} \lVert f \rVert_{WL(A, B)}.
		\end{split}
	\end{equation*}
	This concludes the proof.
\end{proof}

\subsection{Associate spaces of Wiener--Luxemburg amalgams}

Let us now turn our attention to the associate spaces of Wiener--Luxemburg amalgams. The natural intuition here is that an associate space of an amalgam should be an amalgam of the respective associate spaces. This intuition turns out to be correct, as can be observed from the theorem presented below.

Note that while the conclusion of this theorem is natural, its proof is in fact quite involved. The difficulty stems from the fact that for a general function $f \in M$ the restriction of its non-increasing rearrangement $f^* \chi_{(1, \infty)}$ is not necessarily non-increasing and thus the quasinorm $\lVert f^* \chi_{(1, \infty)} \rVert_{WL(A, B)}$ actually depends not only on $\lVert \cdot \rVert_B$ but also on $\lVert \cdot \rVert_A$. This complicates things greatly and an entirely new method had to be developed to resolve the ensuing problems.

\begin{theorem} \label{TAS}
	Let $\lVert \cdot \rVert_A$ and $\lVert \cdot \rVert_B$ be r.i.~Banach function norms and let $\lVert \cdot \rVert_{A'}$ and $\lVert \cdot \rVert_{B'}$ be their respective associate norms. Then there is a constant $C>0$ such that the associate norm $\lVert \cdot \rVert_{(WL(A, B))'}$ of $\lVert \cdot \rVert_{WL(A, B)}$ satisfies
	\begin{equation}\label{TAS1}
		\lVert f \rVert_{(WL(A, B))'} \leq \lVert f \rVert_{WL(A', B')} \leq C \lVert f \rVert_{(WL(A, B))'}
	\end{equation}
	for every $f \in M$.
	
	Consequently, the corresponding associate space satisfies
	\begin{equation*}
		(WL(A, B))' = WL(A', B'),
	\end{equation*}
	up to equivalence of defining functionals.
\end{theorem}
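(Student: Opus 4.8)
The plan is to establish the two inequalities of \eqref{TAS1} separately; the identity of spaces then follows at once, since equivalent functionals have the same domain. Two reductions are made at the outset: we may assume $f\in M_+$, and, since the underlying space $([0,\infty),\lambda)$ is non-atomic and hence resonant, Proposition~\ref{PAS} lets us compute every associate norm appearing below — that of $WL(A,B)$ (which is an r.i.~quasi-Banach function norm by Theorem~\ref{TQN}) as well as those of $A$ and $B$ — as a supremum of integrals $\int_0^\infty u^*v^*\,d\lambda$ taken over non-increasing test profiles. The first inequality is routine; the whole difficulty lies in the second, and within it in the global component, for the reason indicated at the end.

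For the first inequality, fix $g\in M$. By the Hardy--Littlewood inequality (Theorem~\ref{THLI}) and splitting at $t=1$,
\[
\int_{[0,\infty)}\lvert fg\rvert\,d\lambda \le \int_0^\infty f^*g^*\,d\lambda = \int_0^\infty (f^*\chi_{[0,1]})(g^*\chi_{[0,1]})\,d\lambda + \int_0^\infty (f^*\chi_{(1,\infty)})(g^*\chi_{(1,\infty)})\,d\lambda .
\]
Applying the H\"older inequality for associate spaces (Theorem~\ref{THAS}) to the first term with the pair $A,A'$ and to the second with the pair $B,B'$, and then using that $\lVert g^*\chi_{[0,1]}\rVert_A$ and $\lVert g^*\chi_{(1,\infty)}\rVert_B$ are each dominated by $\lVert g\rVert_{WL(A,B)}$, we obtain $\int_{[0,\infty)}\lvert fg\rvert\,d\lambda \le \lVert g\rVert_{WL(A,B)}\lVert f\rVert_{WL(A',B')}$. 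Dividing by $\lVert g\rVert_{WL(A,B)}$ and taking the supremum over $g$ gives $\lVert f\rVert_{(WL(A,B))'} \le \lVert f\rVert_{WL(A',B')}$.

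For the second inequality we bound the two components of $\lVert f\rVert_{WL(A',B')}$ in turn. Since $f^*\chi_{[0,1]}$ is already non-increasing, Proposition~\ref{PAS} for $A$ gives $\lVert f^*\chi_{[0,1]}\rVert_{A'}=\sup\bigl\{\int_0^1 f^*u\,d\lambda : u\in M_+\ \text{non-increasing},\ \lVert u\rVert_A\le 1\bigr\}$. For any such $u$, the function $u\chi_{[0,1]}$ is an admissible test profile for $\lVert\cdot\rVert_{(WL(A,B))'}$: its global component vanishes, so $\lVert u\chi_{[0,1]}\rVert_{WL(A,B)}=\lVert u\chi_{[0,1]}\rVert_A\le 1$, while $\int_0^\infty f^*(u\chi_{[0,1]})\,d\lambda=\int_0^1 f^*u\,d\lambda$; by Proposition~\ref{PAS} for $WL(A,B)$ we get $\int_0^1 f^*u\,d\lambda\le\lVert f\rVert_{(WL(A,B))'}$, and taking the supremum yields $\lVert f^*\chi_{[0,1]}\rVert_{A'}\le\lVert f\rVert_{(WL(A,B))'}$. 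The same argument with $u=\chi_{[0,1]}$ also records the estimate $f^*(1)\le\lVert\chi_{[0,1]}\rVert_A\,\lVert f\rVert_{(WL(A,B))'}$, used below.

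The global component is the crux. Since $f^*$ is non-increasing, $f^*\chi_{(1,\infty)}$ is equimeasurable with $s\mapsto f^*(s+1)$, so Proposition~\ref{PAS} for $B$ gives $\lVert f^*\chi_{(1,\infty)}\rVert_{B'}=\sup_\psi\int_0^\infty f^*(s+1)\psi(s)\,ds$, the supremum over non-increasing $\psi\in M_+$ with $\lVert\psi\rVert_B\le 1$. Fix such a $\psi$ and split at $s=1$. On $[0,1]$ bound crudely: $f^*(s+1)\le f^*(1)$, and axiom~\ref{P5} for $B$ on the set $[0,1]$ gives $\int_0^1\psi\,d\lambda\le C_{[0,1]}$, so $\int_0^1 f^*(s+1)\psi(s)\,ds\le C_{[0,1]}f^*(1)\le C_{[0,1]}\lVert\chi_{[0,1]}\rVert_A\,\lVert f\rVert_{(WL(A,B))'}$ by the previous paragraph. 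For the tail, write $\int_1^\infty f^*(s+1)\psi(s)\,ds=\int_2^\infty f^*(u)\psi(u-1)\,du$ and set $m:=\psi(1^+)$; since $\psi\ge m$ on $[0,1]$ we have $m\lVert\chi_{[0,1]}\rVert_B\le\lVert\psi\rVert_B\le 1$, so $m<\infty$. Put $h:=m\chi_{[0,2]}+\psi(\cdot-1)\chi_{(2,\infty)}$. Then $h$ is non-increasing (note $\psi(u-1)\le m$ for $u>2$), $\int_0^\infty f^*h\,d\lambda\ge\int_2^\infty f^*(u)\psi(u-1)\,du$, the local component is $\lVert h\chi_{[0,1]}\rVert_A=m\lVert\chi_{[0,1]}\rVert_A\le\lVert\chi_{[0,1]}\rVert_A/\lVert\chi_{[0,1]}\rVert_B$, and the global component $\lVert h\chi_{(1,\infty)}\rVert_B$ is at most $\lVert\psi(\cdot-1)\chi_{(1,\infty)}\rVert_B=\lVert\psi\rVert_B\le 1$ (because $h\chi_{(1,\infty)}\le\psi(\cdot-1)\chi_{(1,\infty)}$, using $\psi(u-1)\ge\psi(1)\ge m$ on $(1,2)$, and $\psi(\cdot-1)\chi_{(1,\infty)}$ is equimeasurable with $\psi$). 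Hence $\lVert h\rVert_{WL(A,B)}$ is bounded by a constant depending only on $A$ and $B$, and Proposition~\ref{PAS} for $WL(A,B)$ gives $\int_1^\infty f^*(s+1)\psi(s)\,ds\le\lVert h\rVert_{WL(A,B)}\lVert f\rVert_{(WL(A,B))'}$. Adding the two parts, taking the supremum over $\psi$, and combining with the local estimate yields $\lVert f\rVert_{WL(A',B')}\le C\lVert f\rVert_{(WL(A,B))'}$, and the equality of spaces follows. The main obstacle is exactly this global step: the obvious profile $\psi(\cdot-1)\chi_{(1,\infty)}$ reproduces $\psi$ under rearrangement and hence drags in the uncontrolled local quantity $\lVert\psi\chi_{[0,1]}\rVert_A$, about which $\psi\in B$ says nothing; the device above circumvents this by absorbing $\psi$ on $[0,1]$ into the elementary bound $f^*(1)\lesssim\lVert f\rVert_{(WL(A,B))'}$ and handling $\psi$ on $[1,\infty)$ — where it is automatically bounded by $\lVert\chi_{[0,1]}\rVert_B^{-1}$ — via a shifted, flat-topped test profile whose local component is therefore harmless.
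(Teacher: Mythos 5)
Your proof is correct, and while the first inequality is handled exactly as in the paper, your treatment of the harder second inequality takes a genuinely different route. You prove it directly and quantitatively: each component of $\lVert f\rVert_{WL(A',B')}$ is bounded by testing the associate norm of $WL(A,B)$ against explicitly built non-increasing profiles --- $u\chi_{[0,1]}$ for the local part, and the flat-topped shifted profile $h=m\chi_{[0,2]}+\psi(\cdot-1)\chi_{(2,\infty)}$ with $m=\psi(1^+)\le\lVert\chi_{[0,1]}\rVert_B^{-1}$ for the global part. The paper instead argues indirectly: it establishes only the set inclusion $(WL(A,B))'\subseteq WL(A',B')$, by contraposition, using Landau's resonance theorem (Theorem~\ref{LT}) to produce, for $f\notin WL(A',B')$, a function making the pairing infinite, and then building a single witness $h=\tilde{g}(2)\chi_{[0,1]}+\min\{\tilde{g},\tilde{g}(2)\}\in WL(A,B)$; the norm inequality, with an unspecified constant, then comes for free from Theorem~\ref{TEQBFS} combined with Theorems~\ref{TQN} and~\ref{TFA}. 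Your flat-top-plus-shift device is essentially the same idea as the paper's construction of its witness $h$, but you run it uniformly over all test profiles $\psi$ in the unit ball of $B$ rather than on one pathological function; this buys an explicit constant $C$ (in terms of $\lVert\chi_{[0,1]}\rVert_A$, $\lVert\chi_{[0,1]}\rVert_B$ and the \ref{P5} constant of $B$) and dispenses with Theorems~\ref{LT} and~\ref{TEQBFS}, whereas the paper's softer argument is shorter and reuses machinery it needs anyway, at the price of no control on $C$. One presentational remark: in the degenerate case $f^*(1)=\infty$ your test function $u=\chi_{[0,1]}/\lVert\chi_{[0,1]}\rVert_A$ already forces $\lVert f\rVert_{(WL(A,B))'}=\infty$, so all your estimates hold trivially; it would be worth stating this explicitly, as the paper does when it reduces to $f^*(1)<\infty$.
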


\begin{proof}
	We begin by showing the first inequality in \eqref{TAS1}. To this end, fix some $f \in M$ and arbitrary $g \in M$ satisfying $\lVert g \rVert_{WL(A, B)} < \infty$. Then it follows from the Hölder inequality for associate spaces (Theorem~\ref{THAS}) that
	\begin{equation*}
		\begin{split}
			\int_0^{\infty} f^* g^* \: d\lambda &= \int_0^{\infty} f^*\chi_{[0,1]} g^* \: d\lambda + \int_0^{\infty} f^* \chi_{(1, \infty)} g^* \: d\lambda \\
			&\leq  \lVert f^* \chi_{[0,1]} \rVert_{A'} \lVert g^* \chi_{[0,1]} \rVert_A + \lVert f^* \chi_{(1,\infty)} \rVert_{B'} \lVert g^* \chi_{(1,\infty)} \rVert_B \\
			&\leq \max \{\lVert f^* \chi_{[0,1]} \rVert_{A'}, \: \lVert f^* \chi_{(1,\infty)} \rVert_{B'} \} \cdot \lVert g \rVert_{WL(A, B)} \\
			&\leq \lVert f \rVert_{WL(A', B')} \lVert g \rVert_{WL(A, B)}.
		\end{split}
	\end{equation*}
	The desired inequality now follows by dividing both sides of the inequality by $\lVert g \rVert_{WL(A, B)}$, taking the supremum over $WL(A,B)$ and using Proposition~\ref{PAS}.
	
	The second inequality in \eqref{TAS1} is more involved. We obtain it indirectly, showing first that $(WL(A,B))' \subseteq WL(A', B')$ and then using Theorem~\ref{TEQBFS}.
	
	Suppose that $f \notin WL(A', B')$. Then at least one of the following holds: $f^* \chi_{[0,1]} \notin A'$ or $f^* \chi_{(1,\infty)} \notin B'$. We treat these two cases separately and show that either is sufficient for $f \notin (WL(A,B))'$.
	
	If $f^* \chi_{[0,1]} \notin A'$ then we get by Theorem~\ref{LT} that there is a non-negative function $g \in A$ such that
	\begin{equation*}
		\int_0^{\infty} f^* \chi_{[0,1]} g \: d\lambda = \infty.
	\end{equation*}
	Now, $g^* \chi_{[0,1]} \in WL(A,B)$ because
	\begin{equation*}
		\lVert g^* \chi_{[0,1]} \rVert_{WL(A,B)} = \lVert g^* \chi_{[0,1]} \rVert_A \leq \lVert g^* \rVert_A = \lVert g \rVert_A < \infty
	\end{equation*}
	and we have by the Hardy--Littlewood inequality (Theorem~\ref{THLI}) the following estimate:
	\begin{equation*}
		\infty = \int_0^{\infty} f^* \chi_{[0,1]} g \: d\lambda \leq \int_0^{\infty} f^* g^* \chi_{[0,1]} \: d\lambda.
	\end{equation*}
	We have thus shown that $f \notin (WL(A,B))'$.
	
	Suppose now that $f^* \chi_{(1,\infty)} \notin B'$. We may assume that $f^*(1) < \infty$, because otherwise $f^* \chi_{[0,1]} = \infty \chi_{[0,1]} \notin A'$ and thus $f \notin (WL(A,B))'$ by the argument above. As in the previous case, we get by Theorem~\ref{LT} that there is some non-negative function $g \in B$ such that
	\begin{equation*}
		\int_0^{\infty} f^* \chi_{(1,\infty)} g \: d\lambda = \infty.
	\end{equation*}
	Now, it holds for all $t \in (0, \infty)$ that
	\begin{equation*}
		(f^* \chi_{(1,\infty)})^*(t) = f^*(t+1),
	\end{equation*}
	which, when combined with the Hardy--Littlewood inequality (Theorem~\ref{THLI}), yields
	\begin{equation*}
		\infty = \int_0^{\infty} f^* \chi_{(1,\infty)} g \: d\lambda \leq \int_0^{\infty} f^*(t+1) g^*(t) \: dt = \int_1^{\infty} f^*(t) g^*(t-1) \: dt.
	\end{equation*}
	If we now put
	\begin{equation*}
		\tilde{g}(t) = \begin{cases}
			0 & \text{for } t \in [0,1], \\
			g^*(t-1) & \text{for } t \in (1, \infty),
		\end{cases}
	\end{equation*}
	we immediately see that $\tilde{g}^* = g^*$ and thus we have found a function $\tilde{g} \in B$ that is zero on $[0, 1]$, non-increasing on $(1, \infty)$ and that satisfies
	\begin{equation*}
		\int_0^{\infty} f^* \chi_{(1,\infty)} \tilde{g} \: d\lambda = \infty.
	\end{equation*}
	Furthermore, we may estimate
	\begin{equation*}
		\int_1^{2} f^* \tilde{g} \: d\lambda \leq f^*(1) \int_1^2 \tilde{g} \: d\lambda \leq f^*(1)C_{[1,2]} \lVert \tilde{g} \rVert_{B} < \infty,
	\end{equation*}
	where $C_{[1,2]}$ is the constant from property \ref{P5} of $\lVert \cdot \rVert_B$. It follows that
	\begin{equation*}
		\int_2^{\infty} f^* \tilde{g} \: d\lambda = \infty,
	\end{equation*}
	because
	\begin{equation*}
		\infty = \int_0^{\infty} f^* \chi_{(1,\infty)} \tilde{g} \: d\lambda = \int_1^{2} f^* \tilde{g} \: d\lambda + \int_2^{\infty} f^* \tilde{g} \: d\lambda.
	\end{equation*}
	
	Finally, put $h = \tilde{g}(2) \chi_{[0,1]} + \min\{\tilde{g}, \tilde{g}(2)\}$. Note that $\tilde{g}(2) < \infty$ as follows from $\tilde{g} \in B$ and that $h$ is therefore a finite non-increasing function. Hence, we get that
	\begin{equation*}
		\lVert h \rVert_{WL(A,B)} = \tilde{g}(2) \lVert \chi_{(0,1)} \rVert_A + \lVert \min\{\tilde{g}, \tilde{g}(2)\} \rVert_{B} \leq \tilde{g}(2) \lVert \chi_{(0,1)} \rVert_A + \lVert \tilde{g} \rVert_{B} < \infty,
	\end{equation*}
	while by the arguments above we have
	\begin{equation*}
		\int_0^{\infty} f^* h^* \: d\lambda \geq \int_2^{\infty} f^* h^* \: d\lambda = \int_2^{\infty} f^* \tilde{g} \: d\lambda = \infty.
	\end{equation*}
	We therefore get that $f \notin (WL(A,B))'$. This covers the last case and establishes the desired inclusion $(WL(A,B))' \subseteq WL(A', B')$.
	
	Because we already know from Theorem~\ref{TQN} that $WL(A', B')$ is a quasi-Banach function space and from Theorem~\ref{TFA} that $(WL(A,B))'$ is a Banach function space, we may use Theorem~\ref{TEQBFS} to obtain $(WL(A,B))' \hookrightarrow WL(A', B')$, i.e.~that there is a constant $C>0$ such that it holds for all $f \in M$ that
	\begin{equation*}
		\lVert f \rVert_{WL(A', B')} \leq C \lVert f \rVert_{(WL(A, B))'},
	\end{equation*}
	which concludes the proof.
\end{proof}

As a corollary, we obtain normability of Wiener--Luxemburg amalgam spaces.

\begin{corollary} \label{CN}
	Let $\lVert \cdot \rVert_A$ and $\lVert \cdot \rVert_B$ be r.i.~Banach function norms. Then the Wiener--Luxemburg quasinorm $\lVert \cdot \rVert_{WL(A, B)}$ is equivalent to an r.i.~Banach function norm. Consequently, the Wiener--Luxemburg amalgam space $WL(A,B)$ is an r.i.~Banach function space.
\end{corollary}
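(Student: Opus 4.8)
The plan is to invoke Theorem~\ref{TFA} with $\lVert\cdot\rVert_X = \lVert\cdot\rVert_{WL(A,B)}$. By Theorem~\ref{TQN} this functional is a quasi-Banach function norm satisfying \ref{P4} and \ref{P5}, and it manifestly satisfies $\lVert f\rVert_{WL(A,B)} = \lVert\,\lvert f\rvert\,\rVert_{WL(A,B)}$, so the hypotheses of Theorem~\ref{TFA} are met. Hence its associate functional $\lVert\cdot\rVert_{(WL(A,B))'}$ is already a Banach function norm, and $\lVert\cdot\rVert_{WL(A,B)}$ is equivalent to a Banach function norm precisely when \eqref{TFA1} holds, i.e.\ when $\lVert f\rVert_{(WL(A,B))''} \leq \lVert f\rVert_{WL(A,B)} \leq C\lVert f\rVert_{(WL(A,B))''}$ for some $C\geq 1$ and all $f\in M$.

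First I would dispatch the left-hand inequality: it is exactly Proposition~\ref{PESSAS} applied to $\lVert\cdot\rVert_X = \lVert\cdot\rVert_{WL(A,B)}$, whose hypotheses were just verified. For the right-hand inequality I would iterate Theorem~\ref{TAS}. That theorem gives, up to equivalence of defining functionals, $(WL(A,B))' = WL(A',B')$; applying it once more to the pair $\lVert\cdot\rVert_{A'},\lVert\cdot\rVert_{B'}$, which are again r.i.~Banach function norms, yields $(WL(A',B'))' = WL(A'',B'')$ up to equivalence. Since $\lVert\cdot\rVert_A$ and $\lVert\cdot\rVert_B$ are Banach function norms, the Lorentz--Luxemburg theorem (Theorem~\ref{TDAS}) gives $\lVert\cdot\rVert_{A''} = \lVert\cdot\rVert_A$ and $\lVert\cdot\rVert_{B''} = \lVert\cdot\rVert_B$, so $WL(A'',B'') = WL(A,B)$ with identical functionals. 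Chaining the two equivalences, and using Proposition~\ref{PEASG} to propagate the equivalence from $\lVert\cdot\rVert_{(WL(A,B))'}$ to its associate, produces a constant $C$ with $\lVert f\rVert_{WL(A,B)} \leq C\lVert f\rVert_{(WL(A,B))''}$ for all $f$. This establishes \eqref{TFA1}, and Theorem~\ref{TFA} then shows $\lVert\cdot\rVert_{WL(A,B)}$ is equivalent to a Banach function norm.

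It remains to exhibit an \emph{r.i.} Banach function norm equivalent to $\lVert\cdot\rVert_{WL(A,B)}$, and for this I would simply take $\lVert\cdot\rVert_{(WL(A,B))''}$ itself. By Theorem~\ref{TFA} it is a Banach function norm; by Theorem~\ref{TQN} the space $WL(A,B)$ is rearrangement-invariant over the resonant measure space $([0,\infty),\lambda)$, so $\lVert\cdot\rVert_{(WL(A,B))'}$ is r.i.~by the observation following Proposition~\ref{PAS}, and for the same reason so is its associate $\lVert\cdot\rVert_{(WL(A,B))''}$; the estimate \eqref{TFA1} just proved makes it equivalent to $\lVert\cdot\rVert_{WL(A,B)}$. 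Finally, equivalent functionals define the same set of functions, so $WL(A,B)$ coincides with the r.i.~Banach function space determined by $\lVert\cdot\rVert_{(WL(A,B))''}$, which gives the last assertion. I do not expect any genuine obstacle here, since the corollary is a formal consequence of Theorems~\ref{TQN}, \ref{TFA} and \ref{TAS}; the only points needing a little care are the bookkeeping of constants when iterating Theorem~\ref{TAS} and passing to associates, and checking that rearrangement invariance survives the double-associate operation.
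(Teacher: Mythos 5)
Your argument is correct and rests on exactly the same key ingredients as the paper's proof: Theorem~\ref{TAS} applied to the pair $\lVert\cdot\rVert_{A'},\lVert\cdot\rVert_{B'}$, the Lorentz--Luxemburg identities $A''=A$, $B''=B$ from Theorem~\ref{TDAS}, and Theorems~\ref{TQN} and \ref{TFA} to conclude that the resulting associate space is an r.i.~Banach function space. The paper simply takes the shortcut $WL(A,B)=WL(A'',B'')=(WL(A',B'))'$, so your additional detour through verifying \eqref{TFA1} (via Proposition~\ref{PESSAS}, a second application of Theorem~\ref{TAS} and Proposition~\ref{PEASG}) is sound but not needed.
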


\begin{proof}
	It follows from Theorem~\ref{TAS} that
	\begin{equation*}
		WL(A, B) = WL(A'', B'') = (WL(A',B'))',
	\end{equation*}
	where the space on the right-hand side is a Banach function space thanks to Theorem~\ref{TFA} and Theorem~\ref{TQN}.
\end{proof}

\subsection{Embeddings}

We now examine the embeddings of Wiener--Luxemburg amalgams. First we characterise the embeddings between two Wiener--Luxemburg amalgams, then between a Wiener--Luxemburg amalgam of two spaces and the sum or intersection of these spaces and finally we examine the case when the local or the global component is $L^1$ or $L^{\infty}$.

The first theorem provides the characterisation of embeddings among Wiener--Luxemburg amalgams.

\begin{theorem} \label{TEM}
	Let $\lVert \cdot \rVert_A$, $\lVert \cdot \rVert_B$, $\lVert \cdot \rVert_C$ and $\lVert \cdot \rVert_D$ be r.i.~Banach function norms. Then the following assertions are true:
	\begin{enumerate}
		\item The embedding $WL(A, C) \hookrightarrow WL(B,C)$ holds if and only if the local component of $\lVert \cdot \rVert_A$ is stronger that that of $\lVert \cdot \rVert_B$, in the sense that for every $f \in M$ the implication
		\begin{equation*}
			\lVert f^* \chi_{[0,1]} \rVert_A < \infty \Rightarrow \lVert f^* \chi_{[0,1]} \rVert_B < \infty
		\end{equation*} \label{TEMp1}
		holds.
		\item The embedding $WL(A, B) \hookrightarrow WL(A,C)$ holds if and only if the global component of $\lVert \cdot \rVert_B$ is stronger that that of $\lVert \cdot \rVert_C$, in the sense that for every $f \in M$ such that $f^*(1) < \infty$ the implication
		\begin{equation*}
			\lVert f^* \chi_{(1, \infty)} \rVert_B < \infty \Rightarrow \lVert f^* \chi_{(1, \infty)} \rVert_C < \infty
		\end{equation*}
		holds. \label{TEMp2}
		\item The embedding $WL(A,B) \hookrightarrow WL(C,D)$ holds if and only if the local component of $\lVert \cdot \rVert_A$ is stronger than that of $\lVert \cdot \rVert_C$ and the global component of $\lVert \cdot \rVert_B$ is stronger than that of $\lVert \cdot \rVert_D$. \label{TEMp3}
	\end{enumerate}
\end{theorem}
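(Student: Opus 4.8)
The plan is to establish all three equivalences by reducing everything to the two basic "component" comparisons and then combining them. The key observation is that the local and global components essentially decouple in the Wiener--Luxemburg construction, so I would first prove parts~\eqref{TEMp1} and~\eqref{TEMp2} directly and then obtain part~\eqref{TEMp3} by composition.

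For part~\eqref{TEMp1}, the necessity direction is immediate: if $WL(A,C)\hookrightarrow WL(B,C)$ and $\lVert f^*\chi_{[0,1]}\rVert_A<\infty$, then applying the embedding to $f^*\chi_{[0,1]}$ itself (whose rearrangement is supported in $[0,1]$, so its global $C$-component vanishes) gives $\lVert f^*\chi_{[0,1]}\rVert_B<\infty$. For sufficiency, note that the hypothesis says exactly that the local component of $\lVert\cdot\rVert_A$ (which by Proposition~\ref{PLC} is an r.i.~Banach function norm, call it $\lVert\cdot\rVert_{A_0}$) defines a space contained in the space defined by the local component $\lVert\cdot\rVert_{B_0}$ of $\lVert\cdot\rVert_B$. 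By Theorem~\ref{TEQBFS} this set inclusion upgrades to a continuous embedding $A_0\hookrightarrow B_0$ with some constant $K$. Then for any $f$,
\begin{equation*}
\lVert f\rVert_{WL(B,C)}=\lVert f^*\chi_{[0,1]}\rVert_{B_0}+\lVert f^*\chi_{(1,\infty)}\rVert_C\leq K\lVert f^*\chi_{[0,1]}\rVert_{A_0}+\lVert f^*\chi_{(1,\infty)}\rVert_C\leq \max\{K,1\}\lVert f\rVert_{WL(A,C)},
\end{equation*}
which gives the embedding.

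Part~\eqref{TEMp2} is the more delicate one and I expect it to be the main obstacle, for exactly the reason flagged before Theorem~\ref{TAS}: the global component of $\lVert\cdot\rVert_{WL(A,B)}$ is not simply a function of $\lVert\cdot\rVert_B$, because $f^*\chi_{(1,\infty)}$ need not be non-increasing, so one cannot naively pass to a representation over $([0,\infty),\lambda)$ and invoke Theorem~\ref{TEQBFS} on the global pieces alone. The workaround is to restrict attention to functions whose rearrangement is already "flat enough" near $1$: given $g\in B$ with $\tilde g=g^*$ supported on $(1,\infty)$ and non-increasing there, one builds (as in the proof of Theorem~\ref{TAS}) an honest non-increasing competitor by capping, namely $h=\tilde g(2)\chi_{[0,1]}+\min\{\tilde g,\tilde g(2)\}$, which lies in $WL(A,B)$ with comparable quasinorm and whose behaviour on $(2,\infty)$ agrees with $\tilde g$. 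Using such functions one transfers the set inclusion of the global components to a genuine embedding via Theorem~\ref{TEQBFS} applied to the full Wiener--Luxemburg spaces; the hypothesis $f^*(1)<\infty$ in the statement is precisely what lets the capping argument go through. Necessity again follows by testing the embedding on $\tilde g$ of the above form. The constant $\lVert\chi_{(1/2,1)}\rVert_B\lVert\chi_{(0,1/2)}\rVert_A^{-1}$ appearing in the proof of Theorem~\ref{TQN} will reappear here when controlling the effect of $A$ on the global piece.

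Finally, part~\eqref{TEMp3} follows by factoring the embedding through an intermediate space: $WL(A,B)\hookrightarrow WL(C,D)$ holds if and only if both $WL(A,B)\hookrightarrow WL(C,B)$ and $WL(C,B)\hookrightarrow WL(C,D)$ hold. The forward direction of this factoring needs the converse implications from parts~\eqref{TEMp1} and~\eqref{TEMp2}: composing $WL(A,B)\hookrightarrow WL(C,D)$ with suitable test functions supported in $[0,1]$ (resp.\ in $(1,\infty)$ with $f^*(1)<\infty$) isolates the local (resp.\ global) comparison, so one recovers that the local component of $A$ dominates that of $C$ and the global component of $B$ dominates that of $D$; the backward direction is just transitivity of $\hookrightarrow$ applied to the conclusions of parts~\eqref{TEMp1} and~\eqref{TEMp2}. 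This completes the proof modulo the capping construction in part~\eqref{TEMp2}.
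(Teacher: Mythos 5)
Your overall strategy coincides with the paper's: sufficiency by passing from a set inclusion to a continuous embedding via Theorem~\ref{TEQBFS}, necessity by exhibiting explicit test functions, and part~\ref{TEMp3} by composing the first two parts (you factor through $WL(C,B)$, the paper through $WL(A,D)$ --- immaterial). Your treatment of part~\ref{TEMp1} is correct; routing it through Proposition~\ref{PLC} and the local-component spaces is a harmless variant of the paper's argument, which simply observes the set inclusion $WL(A,C)\subseteq WL(B,C)$ and applies Theorem~\ref{TEQBFS} to the full amalgams, these being quasi-Banach function spaces by Theorem~\ref{TQN}.

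Where you overcomplicate matters is part~\ref{TEMp2}: the obstacle you anticipate is not there, and the capping construction you import from the proof of Theorem~\ref{TAS} plays no role in the sufficiency. Every $f \in WL(A,B)$ automatically satisfies $f^*(1) < \infty$ (otherwise $f^*\chi_{[0,1]}$ would be infinite on $[0,1)$ and could not have finite $A$-norm), so the hypothesis applies verbatim to each such $f$ and yields $\lVert f^*\chi_{(1,\infty)} \rVert_C < \infty$, i.e.\ the set inclusion $WL(A,B) \subseteq WL(A,C)$ of the full amalgams; Theorem~\ref{TEQBFS} then finishes exactly as in part~\ref{TEMp1}. The subtlety you flag --- that $f^*\chi_{(1,\infty)}$ need not be non-increasing, so the global pieces cannot be fed to Theorem~\ref{TEQBFS} on their own --- is real but irrelevant here, since one never needs to treat the global components as spaces in their own right; it matters for Theorem~\ref{TAS}, not for this theorem. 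A capping-type function is needed only as the necessity witness: the paper uses the simpler $f = f_0^*(1)\chi_{[0,1]} + f_0^*\chi_{(1,\infty)}$, which is already non-increasing, lies in $WL(A,B)$, and fails to lie in $WL(A,C)$; your $h = \tilde g(2)\chi_{[0,1]} + \min\{\tilde g, \tilde g(2)\}$ would also work, at the cost of an extra triangle-inequality argument on $(1,2]$, and note that merely ``testing on $\tilde g$'' without such an adjustment does not work, precisely because rearranging $\tilde g$ shifts its support back to the origin. With this simplification your proposal closes the gap you left open (``modulo the capping construction'') and matches the paper's proof; the same witnesses also make your sketch of the necessity in part~\ref{TEMp3} precise.
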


\begin{proof}
	In the first two cases the sufficiency follows directly from Theorem~\ref{TEQBFS} and Definition~\ref{DefWL}, only in the second case one has to realise that all $f \in WL(A, B)$ satisfy $f^*(1) < \infty$. The third case then follows, because we can use what we already proved to get
	\begin{equation*}
		WL(A,B) \hookrightarrow WL(A,D) \hookrightarrow WL(C,D).
	\end{equation*}
	
	The necessity in the case~\ref{TEMp1} can be shown in a following way. Fix some $f_0 \in M$ such that $\lVert f_0^* \chi_{[0,1]} \rVert_A < \infty$ but $\lVert f_0^* \chi_{[0,1]} \rVert_B = \infty$. Then $f = f_0^* \chi_{[0,1]}$ belongs to $WL(A,C)$, since 
	\begin{align*}
		\lVert f^* \chi_{[0,1]} \rVert_A &= \lVert f_0^* \chi_{[0,1]} \rVert_A < \infty, \\
		\lVert f^* \chi_{(1, \infty)} \rVert_C &= \lVert 0 \rVert_C = 0,
	\end{align*}
	but not to $WL(B,C)$, since 
	\begin{equation*}
		\lVert f^* \chi_{[0,1]} \rVert_B = \lVert f_0^* \chi_{[0,1]} \rVert_B = \infty.
	\end{equation*}
	
	As for the case~\ref{TEMp2}, fix some $f_0 \in M$ such that $f_0^*(1) < \infty$ and $\lVert f_0^* \chi_{(1, \infty)} \rVert_B < \infty$ while $\lVert f_0^* \chi_{(1, \infty)} \rVert_C = \infty$. Then $f = f_0^*(1) \chi_{[0,1]} + f_0^* \chi_{(1, \infty)}$ belongs to $WL(A,B)$, since 
	\begin{align*}
		\lVert f^* \chi_{[0,1]} \rVert_A &= \lVert f_0^*(1) \chi_{[0,1]} \rVert_A = f_0^*(1) \lVert \chi_{[0,1]} \rVert_A < \infty, \\
		\lVert f^* \chi_{(1, \infty)} \rVert_B &= \lVert f^*_0 \chi_{(1, \infty)} \rVert_B < \infty,
	\end{align*}
	but not to $WL(A,C)$, since
	\begin{equation*} 
		\lVert f^* \chi_{(1, \infty)} \rVert_C = \lVert f^*_0 \chi_{(1, \infty)} \rVert_C = \infty.
	\end{equation*}
	
	Finally for the case~\ref{TEMp3} one needs to combine the steps presented above. To be precise, if the condition on local components is violated one finds the counterexample as in the case~\ref{TEMp1} while in the case when the condition on the global components gets violated one finds it as in the case~\ref{TEMp2}.
\end{proof}

To provide an example we turn to the classical Lebesgue spaces. It is well known that Lebesgue spaces over $[0,\infty)$ are not ordered, but it is easy to show that their local and global component are, as is formalised in the following remark. 

\begin{remark} \label{RELp}
	Let $p,q \in (0,\infty]$. Then it holds that
	\begin{enumerate}
		\item the local component of $\lVert \cdot \rVert_{L^p}$ is stronger than that of $\lVert \cdot \rVert_{L^q}$ if and only if $p \geq q$,
		\item the global component of $\lVert \cdot \rVert_{L^p}$ is stronger than that of $\lVert \cdot \rVert_{L^q}$ if and only if $p \leq q$.
	\end{enumerate}
\end{remark}

As a second example we present a similar statement about Lorentz spaces. The proof is easy and uses only standard techniques.

\begin{remark} \label{RELpq}
	Let $p_1, p_2, q_1, q_2 \in (0,\infty]$ such that $p_1 \neq p_2$ and let $\lVert \cdot \rVert_{L^{p_1,q_1}}$ and $\lVert \cdot \rVert_{L^{p_2,q_2}}$ be the corresponding Lorentz functionals. Then it holds that
	\begin{enumerate}
		\item the local component of $\lVert \cdot \rVert_{L^{p_1,q_1}}$ is stronger than that of $\lVert \cdot \rVert_{L^{p_2,q_2}}$ if and only if $p_1 > p_2$,
		\item the global component of $\lVert \cdot \rVert_{L^{p_1,q_1}}$ is stronger than that of $\lVert \cdot \rVert_{L^{p_2,q_2}}$ if and only if $p_1 < p_2$.
	\end{enumerate}
\end{remark}

A third example might be found among Orlicz spaces. The following remark contains sufficient conditions for the ordering of their respective local and global components. It is again easy to prove, using only the already well-known methods which have been originally developed for characterising the embeddings between Orlicz spaces and which can be found for example in \cite[Theorem~4.17.1]{FucikKufner13}. We also refer the reader to \cite[Chapter~4]{FucikKufner13} for an extensive treatment of Orlicz spaces.

\begin{remark}
	Let $\Phi_1$ and $\Phi_2$ be two Young functions and let $\lVert \cdot \rVert_{\Phi_1}$ and $\lVert \cdot \rVert_{\Phi_2}$ be the corresponding Orlicz norms. Then
	\begin{enumerate}
		\item if there are some constants $c, T \in (0, \infty)$ such that
		\begin{equation*}
			\Phi_2(t) \leq \Phi_1(ct)
		\end{equation*}
		for all $t \in [T, \infty)$ then the local component of $\lVert \cdot \rVert_{\Phi_1}$ is stronger than that of $\lVert \cdot \rVert_{\Phi_2}$,
		\item if there are some constants $c, T \in (0, \infty)$ such that
		\begin{equation*}
			\Phi_2(t) \leq \Phi_1(ct)
		\end{equation*}
		for all $t \in [0, T]$ then the global component of $\lVert \cdot \rVert_{\Phi_1}$ is stronger than that of $\lVert \cdot \rVert_{\Phi_2}$.
	\end{enumerate}
\end{remark}

We now put $WL(A,B)$ in relation with the sum and intersection of $A$ and $B$. We first show that $WL(A,B)$ is always sandwiched between them.

\begin{theorem}\label{TRS}
	Let $\lVert \cdot \rVert_A$ and $\lVert \cdot \rVert_B$ be r.i.~Banach function norms. Then
	\begin{equation*}
		A \cap B \hookrightarrow WL(A,B) \hookrightarrow A + B.
	\end{equation*}
\end{theorem}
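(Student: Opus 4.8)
The plan is to prove the two embeddings separately, using the characterisation of membership in $WL(A,B)$ together with the Hardy--Littlewood inequality and property \ref{P5}.

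For the embedding $A \cap B \hookrightarrow WL(A,B)$, I would fix $f \in A \cap B$ and simply estimate each component of $\lVert f \rVert_{WL(A,B)}$ by the lattice property. Since $f^* \chi_{[0,1]} \le f^*$ pointwise, \ref{P2} gives $\lVert f^* \chi_{[0,1]} \rVert_A \le \lVert f^* \rVert_A = \lVert f \rVert_A$, and likewise $\lVert f^* \chi_{(1,\infty)} \rVert_B \le \lVert f \rVert_B$. Adding these yields $\lVert f \rVert_{WL(A,B)} \le \lVert f \rVert_A + \lVert f \rVert_B \le 2 \lVert f \rVert_{A \cap B}$, and the embedding follows (the inclusion $A \cap B \subseteq WL(A,B)$ is immediate from this, and continuity from Theorem~\ref{TEQBFS}, or just directly from the inequality just obtained).

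For the embedding $WL(A,B) \hookrightarrow A+B$, I would fix $f \in WL(A,B)$ and produce an explicit decomposition $f = f_1 + f_2$ with $f_1 \in A$, $f_2 \in B$, bounding $\lVert f_1 \rVert_A + \lVert f_2 \rVert_B$ by a constant multiple of $\lVert f \rVert_{WL(A,B)}$. The natural splitting is along the level $f^*(1)$: informally, put the ``large values'' part of $f$ into $A$ (since $A$ controls $f^*$ on $[0,1]$) and the ``small values'' part into $B$. Concretely I would let $f_1$ be the function obtained by truncating $f$ at height $f^*(1)$ and subtracting, i.e.\ $f_1 = (\lvert f \rvert - f^*(1))_{+}\,\operatorname{sgn} f$, so that $f_1^* = (f^* - f^*(1))_{+}$ is supported on $[0,1]$ and bounded above by $f^* \chi_{[0,1]}$, giving $\lVert f_1 \rVert_A \le \lVert f^* \chi_{[0,1]} \rVert_A$ by \ref{P2} and rearrangement invariance. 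Then $f_2 = f - f_1$ satisfies $\lvert f_2 \rvert = \min\{\lvert f \rvert, f^*(1)\}$, whose rearrangement is $\min\{f^*, f^*(1)\}$; this is bounded by $f^*(1) \chi_{[0,1]} + f^* \chi_{(1,\infty)}$, so $\lVert f_2 \rVert_B \le f^*(1) \lVert \chi_{[0,1]} \rVert_B + \lVert f^* \chi_{(1,\infty)} \rVert_B$. The term $f^*(1) \lVert \chi_{[0,1]} \rVert_B$ must be absorbed into the local component: since $f^*(1) \le \int_0^1 f^* \,d\lambda \le C_{[0,1]} \lVert f^* \chi_{[0,1]} \rVert_A$ by monotonicity of $f^*$ and \ref{P5} for $\lVert \cdot \rVert_A$ (exactly the estimate already used in Proposition~\ref{PLC}), we get $\lVert f_2 \rVert_B \le C_{[0,1]} \lVert \chi_{[0,1]} \rVert_B \lVert f^* \chi_{[0,1]} \rVert_A + \lVert f^* \chi_{(1,\infty)} \rVert_B$. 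Combining, $\lVert f \rVert_{A+B} \le \lVert f_1 \rVert_A + \lVert f_2 \rVert_B \le C \lVert f \rVert_{WL(A,B)}$ with $C = \max\{1, C_{[0,1]} \lVert \chi_{[0,1]} \rVert_B\} + 1$, which proves both the set inclusion and the continuity.

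The main obstacle is the $WL(A,B) \hookrightarrow A+B$ direction, and specifically the bookkeeping in the decomposition: one has to verify that $f_1$ and $f_2$ are genuine measurable functions in $A$ and $B$ (not merely that their rearrangements have finite norm — but this is handled by rearrangement invariance once the pointwise rearrangement identities are checked), and one must correctly identify the rearrangements of the truncation and its complement. The identity $(\lvert f \rvert - c)_{+}$ has rearrangement $(f^* - c)_{+}$ and $\min\{\lvert f\rvert, c\}$ has rearrangement $\min\{f^*, c\}$ are standard properties of the non-increasing rearrangement, but they should be invoked explicitly. Everything else is routine application of \ref{P2}, \ref{P5}, and the monotonicity of $f^*$.
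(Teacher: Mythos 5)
Your proof is correct and follows essentially the same route as the paper: the first embedding via the lattice property, and the second via the truncation of $f$ at height $f^*(1)$, with $f^*(1)$ absorbed into the local component using \ref{P5} for $\lVert \cdot \rVert_A$. The only cosmetic difference is that you carry the sign factor $\operatorname{sgn} f$ along, whereas the paper first reduces to non-negative $f$ via $\lVert f \rVert_{A+B} = \lVert \, \lvert f \rvert \, \rVert_{A+B}$.
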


\begin{proof}
	Fix some $f \in M$. Then
	\begin{equation*}
		\lVert f \rVert_{WL(A, B)} = \lVert f^* \chi_{[0,1]} \rVert_A + \lVert f^* \chi_{(1, \infty)} \rVert_B \leq \lVert f \rVert_A + \lVert f \rVert_B \leq 2 \lVert f \rVert_{A \cap B}
	\end{equation*}
	which establishes the first embedding. 
	
	As for the second embedding, note that we may consider $f$ to be non-negative, since it is easy to show that it holds for every $f \in M$ that $\lVert f \rVert_{A+B} = \lVert \, \lvert f \rvert \, \rVert_{A+B}$.
	
	Consider now functions $g$ and $h$ defined by
	\begin{align*}
		g &= \max \{ f - f^*(1), 0\}, \\
		h &= \min \{ f, f^*(1)\}.
	\end{align*}
	Then $f = g + h$ and thus
	\begin{equation*}
		\begin{split}
			\lVert f \rVert_{A+B} &\leq \lVert g \rVert_A + \lVert h \rVert_B = \lVert g^* \rVert_A + \lVert h^* \rVert_B
		\end{split} 
	\end{equation*}
	thanks to rearrangement invariance of both $\lVert \cdot \rVert_A$ and $\lVert \cdot \rVert_B$. Furthermore, thanks to $f$ beeing non-negative, it is an exercise to verify that
	\begin{align*}
		g^* &= (f^* - f^*(1)) \chi_{[0,1]}, \\
		h^* &= f^*(1) \chi_{[0,1]} + f^* \chi_{(1, \infty)},
	\end{align*}
	and therefore
	\begin{equation*}
		\begin{split}
			\lVert f \rVert_{A+B} &\leq \lVert f^* \chi_{[0,1]} \rVert_A + \lVert f^*(1) \chi_{[0,1]} \rVert_B + \lVert f^* \chi_{(1, \infty)} \rVert_B \\
			&\leq \lVert f \rVert_{WL(A, B)} + \lVert \chi_{[0,1]} \rVert_B \int_0^1 f^* \: d\lambda \\
			&\leq (1 + C_{[0,1]}  \lVert \chi_{[0,1]} \rVert_B ) \lVert f \rVert_{WL(A, B)},
		\end{split} 
	\end{equation*}
	where $C_{[0,1]}$ is the constant from the property \ref{P5} of $\lVert \cdot \rVert_A$ for the set $[0,1]$. This establishes the second embedding.
\end{proof}

Moreover, in the case when we have proper relations between the respective components of $A$ and $B$ we can describe their sum and intersection in terms of Wiener--Luxemburg amalgams, at least in the set theoretical sense.

\begin{corollary}\label{CIS}
	Let $\lVert \cdot \rVert_A$ and $\lVert \cdot \rVert_B$ be r.i.~Banach function norms. Suppose that the local component of $\lVert \cdot \rVert_A$ is stronger than that of $\lVert \cdot \rVert_B$ while the global component of $\lVert \cdot \rVert_B$ is stronger than that of $\lVert \cdot \rVert_A$. Then
	\begin{equation*}
		A \cap B = WL(A,B)
	\end{equation*}
	up to equivalence of quasinorms, while
	\begin{equation} \label{CIS1}
		A + B = WL(B,A)
	\end{equation}
	as sets.
\end{corollary}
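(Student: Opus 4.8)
The plan is to read off both identities from the embedding theorems already at our disposal rather than to compute with the defining functionals directly. For the first identity, Theorem~\ref{TRS} already supplies one embedding, $A\cap B\hookrightarrow WL(A,B)$, so it remains to establish $WL(A,B)\hookrightarrow A\cap B$, which amounts to the two embeddings $WL(A,B)\hookrightarrow A$ and $WL(A,B)\hookrightarrow B$. Using Remark~\ref{RAS} to identify $A$ with $WL(A,A)$ and $B$ with $WL(B,B)$ up to equivalence of norms, both of these are instances of part~\ref{TEMp3} of Theorem~\ref{TEM}: the embedding $WL(A,B)\hookrightarrow WL(A,A)$ requires the local component of $\lVert\cdot\rVert_A$ to be stronger than that of $\lVert\cdot\rVert_A$ (trivial) and the global component of $\lVert\cdot\rVert_B$ to be stronger than that of $\lVert\cdot\rVert_A$ (the first hypothesis), while $WL(A,B)\hookrightarrow WL(B,B)$ requires the local component of $\lVert\cdot\rVert_A$ to be stronger than that of $\lVert\cdot\rVert_B$ (the second hypothesis) and the global component of $\lVert\cdot\rVert_B$ to be stronger than that of $\lVert\cdot\rVert_B$ (trivial). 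Chaining the two directions yields $A\cap B=WL(A,B)$ together with equivalence of the defining quasinorms.

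For the second identity I would proceed analogously. Applying Theorem~\ref{TRS} to the pair $(B,A)$ gives $WL(B,A)\hookrightarrow B+A=A+B$, hence $WL(B,A)\subseteq A+B$. For the reverse inclusion it is enough to check $A\subseteq WL(B,A)$ and $B\subseteq WL(B,A)$, since then every $f=g+h$ with $g\in A$ and $h\in B$ lies in the linear space $WL(B,A)$; by Remark~\ref{RAS} and part~\ref{TEMp3} of Theorem~\ref{TEM} these inclusions follow from $WL(A,A)\hookrightarrow WL(B,A)$ (local component of $\lVert\cdot\rVert_A$ stronger than that of $\lVert\cdot\rVert_B$, global components equal) and from $WL(B,B)\hookrightarrow WL(B,A)$ (local components equal, global component of $\lVert\cdot\rVert_B$ stronger than that of $\lVert\cdot\rVert_A$). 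This gives $A+B\subseteq WL(B,A)$ and hence the set equality $A+B=WL(B,A)$. If one also wanted the quasinorm equivalence, one could feed the set inclusion $A+B\subseteq WL(B,A)$ into Theorem~\ref{TEQBFS}, but the statement as formulated asks only for set equality.

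The computations here are routine; the one point that genuinely needs attention is making sure the two ``stronger than'' hypotheses are pointed at the right embeddings when part~\ref{TEMp3} of Theorem~\ref{TEM} is applied, since interchanging them would break the argument entirely. As a more hands-on alternative for the inclusion $A+B\subseteq WL(B,A)$ one can avoid Theorem~\ref{TEM} altogether: given $f=g+h$ with $g\in A$ and $h\in B$, use the subadditivity of the rearrangement in the form $(g+h)^*\le D_{\frac{1}{2}}g^*+D_{\frac{1}{2}}h^*$ together with the boundedness of $D_{\frac{1}{2}}$ on $A$ and on $B$ (Theorem~\ref{TDRIS}), axiom~\ref{P4}, and the lattice property to show that $f^*\chi_{[0,1]}\in B$ and $f^*\chi_{(1,\infty)}\in A$; here the local part of $g^*$ is absorbed into $B$ via the hypothesis that the local component of $\lVert\cdot\rVert_A$ is stronger than that of $\lVert\cdot\rVert_B$, while the tail of $h^*$ is absorbed into $A$ via the hypothesis on the global components, after noting through axiom~\ref{P5} of $\lVert\cdot\rVert_B$ that $h^*(1)<\infty$. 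The main nuisance in this variant is the cross terms, namely the tail of $g^*$ and the local part of $h^*$, which have to be accommodated together with the half-unit shift produced by $D_{\frac{1}{2}}$.
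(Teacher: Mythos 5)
Your argument is correct and follows essentially the same route as the paper: Theorem~\ref{TRS} for one direction, and Theorem~\ref{TEM} together with Remark~\ref{RAS} and the linearity of $WL(B,A)$ for the reverse inclusions $WL(A,B)\subseteq A\cap B$ and $A+B\subseteq WL(B,A)$. The only caveat concerns your parenthetical suggestion to upgrade $A+B=WL(B,A)$ to a quasinorm equivalence via Theorem~\ref{TEQBFS}: that theorem applies to quasi-Banach function spaces and $A+B$ need not be one (which is precisely why the corollary asserts only set equality there), but since you do not rely on this aside, your proof stands.
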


\begin{proof}
	Thanks to Proposition~\ref{PIBFS}, Theorem~\ref{TEQBFS} and Theorem~\ref{TRS} it suffices to prove that $WL(A,B) \subseteq A \cap B$ and $A + B \subseteq WL(B,A)$. But this is provided by Theorem~\ref{TEM} and Remark~\ref{RAS}, which, thanks to our assumptions, yield
	\begin{align*}
		WL(A,B) &\hookrightarrow A, \\
		WL(A,B) &\hookrightarrow B, \\
		A &\hookrightarrow WL(B,A), \\
		B &\hookrightarrow WL(B,A).
	\end{align*}
	When combined with the fact that $WL(B,A)$ is a linear set, this is sufficient for the inclusions.
\end{proof}

The reason for the equality \eqref{CIS1} holding only in the set theoretical sense is of course the fact that $A+B$ is not necessarily a Banach function space. This also motivates the following observation.

\begin{corollary}
	Let $\lVert \cdot \rVert_A$ and $\lVert \cdot \rVert_B$ be r.i.~Banach function norms. Suppose that the local component of $\lVert \cdot \rVert_A$ is stronger than that of $\lVert \cdot \rVert_B$ while the global component of $\lVert \cdot \rVert_B$ is stronger than that of $\lVert \cdot \rVert_A$. Then there is an r.i.~Banach function norm $\lVert \cdot \rVert_X$ such that there is a constant $C >0$ satisfying
	\begin{equation*}
		\lVert f \rVert_{A+B} \leq C \lVert f \rVert_X
	\end{equation*}
	for all $f \in A+B$ and such that the corresponding r.i.~Banach function space $X$ satisfies
	\begin{equation*}
		X = A + B
	\end{equation*}
	as a set.
\end{corollary}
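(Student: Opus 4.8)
The plan is to take $\lVert\cdot\rVert_X$ to be precisely the r.i.~Banach function norm that Corollary~\ref{CN} attaches to the amalgam $WL(B,A)$ (note the reversed order of the components) and then to read off every assertion of the corollary from results already established. Concretely, applying Corollary~\ref{CN} to the pair $\lVert\cdot\rVert_B$, $\lVert\cdot\rVert_A$ produces an r.i.~Banach function norm equivalent to the Wiener--Luxemburg quasinorm $\lVert\cdot\rVert_{WL(B,A)}$; one may take $\lVert\cdot\rVert_X=\lVert\cdot\rVert_{(WL(B',A'))'}$, which is a Banach function norm by Theorem~\ref{TFA} (applicable since $WL(B',A')$ satisfies \ref{P4} and \ref{P5} by Theorem~\ref{TQN}) and is rearrangement-invariant since it is the associate norm of an r.i.~functional over the resonant space $([0,\infty),\lambda)$ (cf.~Proposition~\ref{PAS}). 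In particular there is a constant $C_0\geq1$ such that $\lVert f\rVert_{WL(B,A)}\leq C_0\lVert f\rVert_X$ for all $f\in M$, and the corresponding space $X$ equals $WL(B,A)$ as a set.

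Next I would identify $X$ with $A+B$ as a set. The hypotheses of the present statement are exactly those of Corollary~\ref{CIS}, whose conclusion \eqref{CIS1} gives $A+B=WL(B,A)$ as sets. Combined with $X=WL(B,A)$ from the previous step, this yields $X=A+B$ as a set, which is one of the two things to prove; in particular every $f\in A+B$ satisfies $\lVert f\rVert_X<\infty$, so the claimed inequality is meaningful.

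For the norm estimate I would invoke Theorem~\ref{TRS} with the roles of $A$ and $B$ interchanged, which gives the embedding $WL(B,A)\hookrightarrow B+A=A+B$, i.e.~a constant $C_1>0$ with $\lVert f\rVert_{A+B}\leq C_1\lVert f\rVert_{WL(B,A)}$ for every $f$. Chaining this with the equivalence $\lVert f\rVert_{WL(B,A)}\leq C_0\lVert f\rVert_X$ from the first step produces $\lVert f\rVert_{A+B}\leq C_1C_0\lVert f\rVert_X$ for all $f\in A+B$, which is the desired conclusion with $C=C_1C_0$. There is no genuine obstacle here: the statement is an assembly of Corollary~\ref{CN}, Corollary~\ref{CIS}, and Theorem~\ref{TRS}, and the only point demanding care is the bookkeeping of component order---one must feed $WL(B,A)$, not $WL(A,B)$, into the normability and sandwich results, since it is $WL(B,A)$ that coincides set-theoretically with $A+B$.
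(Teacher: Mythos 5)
Your argument is correct and is exactly the assembly the paper intends: the corollary is stated as an immediate consequence of Corollary~\ref{CIS} (set equality $A+B=WL(B,A)$), Corollary~\ref{CN} (normability of the Wiener--Luxemburg quasinorm via $(WL(B',A'))'$), and Theorem~\ref{TRS} (the embedding $WL(B,A)\hookrightarrow B+A$). Your attention to the component order, feeding $WL(B,A)$ rather than $WL(A,B)$ into these results, is precisely the right bookkeeping.
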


In the next theorem, we show that the classical Lebesgue space $L^1$ has the weakest local component, as well as the strongest global component, among all r.i.~Banach function spaces, while $L^{\infty}$ has, in the same context, the strongest local component as well as the weakest global component.

\begin{theorem} \label{TLL}
	Let $\lVert \cdot \rVert_A$ and $\lVert \cdot \rVert_B$ be r.i.~Banach function norms and let $A$ and $B$ be the corresponding r.i.~Banach function spaces. Then
	\begin{enumerate}
		\item  $WL(L^{\infty}, B) \hookrightarrow WL(A,B)$, \label{TLLp1}
		\item  $WL(A, L^1) \hookrightarrow WL(A,B)$, \label{TLLp2}
		\item  $WL(A, B) \hookrightarrow WL(L^1, B)$, \label{TLLp3}
		\item  $WL(A, B) \hookrightarrow WL(A, L^{\infty})$. \label{TLLp4}
	\end{enumerate}
\end{theorem}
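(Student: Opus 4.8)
The plan is to obtain all four embeddings from the characterisation of embeddings between Wiener--Luxemburg amalgams proved in Theorem~\ref{TEM}, which turns each of the four assertions into a comparison of local or global components. Concretely, \ref{TLLp1} is equivalent to the local component of $L^\infty$ being stronger than that of $A$, \ref{TLLp3} to the local component of $A$ being stronger than that of $L^1$, \ref{TLLp2} to the global component of $L^1$ being stronger than that of $B$, and \ref{TLLp4} to the global component of $B$ being stronger than that of $L^\infty$. Three of these comparisons are immediate consequences of the axioms \ref{P4} and \ref{P5}; the fourth, relevant to \ref{TLLp2}, is the only point that requires real work.

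I would dispatch the easy cases first. For \ref{TLLp1}, if $\lVert f^* \chi_{[0,1]} \rVert_{L^\infty} = f^*(0) < \infty$, then $f^* \chi_{[0,1]} \leq f^*(0) \chi_{[0,1]}$ by monotonicity of $f^*$, and the lattice property \ref{P2} together with \ref{P4} give $\lVert f^* \chi_{[0,1]} \rVert_A \leq f^*(0) \lVert \chi_{[0,1]} \rVert_A < \infty$. For \ref{TLLp3}, if $\lVert f^* \chi_{[0,1]} \rVert_A < \infty$, then axiom \ref{P5} for $\lVert \cdot \rVert_A$ applied to the set $[0,1]$ yields $\lVert f^* \chi_{[0,1]} \rVert_{L^1} = \int_0^1 f^* \, d\lambda \leq C_{[0,1]} \lVert f^* \chi_{[0,1]} \rVert_A < \infty$. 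For \ref{TLLp4}, every $f \in M$ with $f^*(1) < \infty$ satisfies $\lVert f^* \chi_{(1,\infty)} \rVert_{L^\infty} \leq f^*(1) < \infty$, so the implication demanded by the second part of Theorem~\ref{TEM} holds vacuously.

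The substantive case is \ref{TLLp2}. One is given $f \in M$ with $f^*(1) < \infty$ and $\lVert f^* \chi_{(1,\infty)} \rVert_{L^1} = \int_1^\infty f^* \, d\lambda < \infty$, and one must show $\lVert f^* \chi_{(1,\infty)} \rVert_B < \infty$. Since $f^* \chi_{(1,\infty)}$ need not be non-increasing, rearrangement invariance of $B$ cannot be used on it directly; I would get around this by introducing the non-increasing majorant
\begin{equation*}
	\tilde{f} = f^*(1) \chi_{[0,1]} + f^* \chi_{(1, \infty)}.
\end{equation*}
Because $f^* \leq f^*(1)$ on $(1, \infty)$, the function $\tilde{f}$ is non-increasing, so $\tilde{f} = \tilde{f}^*$, and $\lVert \tilde{f} \rVert_{L^\infty} = f^*(1) < \infty$ while $\lVert \tilde{f} \rVert_{L^1} = f^*(1) + \int_1^\infty f^* \, d\lambda < \infty$, so $\tilde{f} \in L^1 \cap L^\infty$. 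Since $L^1 \cap L^\infty \hookrightarrow B$ for every r.i.~Banach function space $B$, we get $\lVert \tilde{f} \rVert_B < \infty$, and as $f^* \chi_{(1,\infty)} \leq \tilde{f}$ the lattice property \ref{P2} of $\lVert \cdot \rVert_B$ gives $\lVert f^* \chi_{(1,\infty)} \rVert_B \leq \lVert \tilde{f} \rVert_B < \infty$. With all four component comparisons in hand, Theorem~\ref{TEM} delivers the four embeddings, and the only step beyond routine bookkeeping is the passage to $\tilde{f}$ in \ref{TLLp2}, which leans on the classical two-sided embedding $L^1 \cap L^\infty \hookrightarrow A \hookrightarrow L^1 + L^\infty$.
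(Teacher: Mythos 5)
Your argument is correct, and for parts \ref{TLLp1}, \ref{TLLp3} and \ref{TLLp4} it is essentially the paper's own proof (the paper handles \ref{TLLp4} via Proposition~\ref{PLC}, but your direct verification of the hypothesis of part~\ref{TEMp2} of Theorem~\ref{TEM} amounts to the same estimate). Where you genuinely diverge is the key part \ref{TLLp2}: you build the non-increasing majorant $\tilde{f} = f^*(1)\chi_{[0,1]} + f^*\chi_{(1,\infty)}$, place it in $L^1 \cap L^{\infty}$, and invoke the classical embedding $L^1 \cap L^{\infty} \hookrightarrow B$ together with the lattice property, whereas the paper argues by duality: from the already proved part \ref{TLLp4} and Remark~\ref{RAS} it gets $B' \hookrightarrow WL(B', L^{\infty})$, and then Theorem~\ref{TAS}, Proposition~\ref{PEASG} and the Lorentz--Luxemburg identity $B''=B$ give $WL(B, L^1) = (WL(B', L^{\infty}))' \hookrightarrow B$, after which Theorem~\ref{TEM} finishes. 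Your route is more elementary and is essentially the same majorant trick the paper itself uses later in Theorem~\ref{THLP}, so it generalises to any r.i.~quasinorm satisfying the Hardy--Littlewood--P\'{o}lya principle; the price is that it imports the classical fact $L^1 \cap L^{\infty} \hookrightarrow B$ (which ultimately rests on that principle, cf.~Lemma~\ref{LHLP}), and since the paper deduces exactly this fact as a corollary of Theorem~\ref{TLL}, advertising an alternative proof, your version would make that corollary circular within the paper's own architecture, even though it is not circular mathematically because the embedding has an independent classical proof. The paper's duality argument, by contrast, stays self-contained in the associate-space machinery just developed and keeps the corollary a genuine alternative proof.
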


\begin{proof}
	Fix $f \in M$. The first embedding follows from the estimate
	\begin{equation*}
		\lVert f^* \chi_{[0,1]} \rVert_A \leq \lVert f^* \chi_{[0,1]} \rVert_{L^{\infty}} \lVert \chi_{[0,1]} \rVert_A 
	\end{equation*}
	and part~\ref{TEMp1} of Theorem~\ref{TEM}.
	
	The third embedding also uses part~\ref{TEMp1} of Theorem~\ref{TEM} but this time paired with the estimate
	\begin{equation*}
		\lVert f^* \chi_{[0,1]} \rVert_{L^1} = \int_0^1 f^* \chi_{[0,1]} \: d\lambda \leq C_{[0,1]} \lVert f^* \chi_{[0,1]} \rVert_A,
	\end{equation*}
	where $C_{[0,1]}$ is the constant from property \ref{P5} of $\lVert \cdot \rVert_A$.
	
	The fourth embedding is a trivial consequence of Proposition~\ref{PLC}, specifically of \eqref{PLCe}.
	
	The second embedding is most involved. Denote by $\lVert \cdot \rVert_{B'}$ the associate norm of $\lVert \cdot \rVert_B$ and by $B'$ the associate space of $B$. Then we know from part~\ref{TLLp4}, which has already been proved, and Remark~\ref{RAS} that $B' \hookrightarrow WL(B',L^{\infty})$. Thus it follows from Theorem~\ref{TAS} and Proposition~\ref{PEASG} that
	\begin{equation*}
		WL(B, L^1) = (WL(B', L^{\infty}))' \hookrightarrow B'' = B.
	\end{equation*}
	Hence, it follows from part~\ref{TEMp2} of Theorem~\ref{TEM} that the global component of $\lVert \cdot \rVert_{L^1}$ is stronger than that of $\lVert \cdot \rVert_B$ which, by the same theorem, implies the desired embedding.
\end{proof}

As a corollary, we obtain the following well-known classical result, for which we thus provide an alternative proof.

\begin{corollary}
	Let $A$ be an r.i.~Banach function space. Then
	\begin{equation*}
		L^1 \cap L^{\infty} \hookrightarrow A \hookrightarrow L^1 + L^{\infty}.
	\end{equation*}
\end{corollary}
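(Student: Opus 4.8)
The plan is to read off both embeddings from the structural results already established about Wiener--Luxemburg amalgams, the key device being Remark~\ref{RAS}: every r.i.~Banach function space $A$ is, up to equivalence of the defining norms, equal to the amalgam $WL(A,A)$ of itself. Once $A$ is written in this form, comparing it with $L^1 \cap L^\infty$ and $L^1 + L^\infty$ becomes a matter of comparing the local and global components of $A$ against those of $L^1$ and $L^\infty$, which is exactly what Theorem~\ref{TLL} provides.

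For the left-hand embedding, I would produce the chain
\[
	L^1 \cap L^\infty \hookrightarrow WL(L^\infty, L^1) \hookrightarrow WL(A, L^1) \hookrightarrow WL(A,A) \hookrightarrow A.
\]
Here the first arrow is the left half of Theorem~\ref{TRS} applied to the pair $L^\infty$, $L^1$; the second arrow is part~\ref{TLLp1} of Theorem~\ref{TLL} (the local component of $L^\infty$ is the strongest one); the third arrow is part~\ref{TLLp2} of Theorem~\ref{TLL} (the global component of $L^1$ is the strongest one); and the last arrow is Remark~\ref{RAS}.

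For the right-hand embedding, I would dually produce the chain
\[
	A \hookrightarrow WL(A,A) \hookrightarrow WL(L^1, A) \hookrightarrow WL(L^1, L^\infty) \hookrightarrow L^1 + L^\infty,
\]
where the first arrow is Remark~\ref{RAS}, the second is part~\ref{TLLp3} of Theorem~\ref{TLL} (the local component of $L^1$ is the weakest one), the third is part~\ref{TLLp4} of Theorem~\ref{TLL} (the global component of $L^\infty$ is the weakest one), and the last arrow is the right half of Theorem~\ref{TRS} applied to the pair $L^1$, $L^\infty$.

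There is no real obstacle here: all of the analytic content has been front-loaded into Theorem~\ref{TLL} and Theorem~\ref{TRS}, so the proof is purely a matter of assembling these embeddings. The only point demanding care is bookkeeping --- at each step one must invoke the correct one of the four parts of Theorem~\ref{TLL} with the generic r.i.~norms substituted appropriately, and one must not confuse the roles of \emph{stronger} versus \emph{weaker} for the local as opposed to the global component.
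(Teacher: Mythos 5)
Your proof is correct, and all the invocations of Theorem~\ref{TLL} and Theorem~\ref{TRS} are instantiated with the right substitutions. It is, however, routed slightly differently from the paper's proof. The paper does not use Theorem~\ref{TRS} at all: it instead identifies the endpoint spaces with amalgams via Corollary~\ref{CIS}, whose hypotheses are checked by Remark~\ref{RELp}, so that $L^1 \cap L^{\infty} = WL(L^{\infty},L^1)$ up to equivalence of norms and $L^1 + L^{\infty} = WL(L^1,L^{\infty})$ as sets; since the latter identification is only set-theoretic, the paper then needs Theorem~\ref{TEQBFS} together with the fact that $L^1+L^{\infty}$ is an r.i.~Banach function space to upgrade the inclusion into a continuous embedding, after which Theorem~\ref{TLL} (and Remark~\ref{RAS}) finishes the argument exactly as in your chains. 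Your version replaces this endpoint identification by the one-sided sandwich $A \cap B \hookrightarrow WL(A,B) \hookrightarrow A+B$ of Theorem~\ref{TRS}, which is all that is actually needed here; this buys a more economical argument (no appeal to Remark~\ref{RELp}, Corollary~\ref{CIS}, Theorem~\ref{TEQBFS}, or the completeness/Banach-function-space property of $L^1+L^{\infty}$), at the cost of not recording the stronger fact, implicit in the paper's route, that the endpoints actually coincide with the amalgams $WL(L^{\infty},L^1)$ and $WL(L^1,L^{\infty})$.
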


\begin{proof}
	The assertion is a direct consequence of Remark~\ref{RELp}, Corollary~\ref{CIS}, Theorem~\ref{TLL}, Theorem~\ref{TEQBFS} and the fact that $L^1 + L^{\infty}$ is an r.i.~Banach function space.
\end{proof}

The final result of this section is a more precise version of Proposition~\ref{PEASG}.

\begin{proposition} \label{PEAS}
	Let $\lVert \cdot \rVert_A$ and $\lVert \cdot \rVert_B$ be r.i.~Banach function norms and denote by $\lVert \cdot \rVert_ {A'}$ and $\lVert \cdot \rVert_{B'}$ the respective associate norms. Suppose that the local component of $\lVert \cdot \rVert_A$ is stronger than that of $\lVert \cdot \rVert_B$. Then the local component of $\lVert \cdot \rVert_{B'}$ is stronger than that of $\lVert \cdot \rVert_{A'}$.
	
	Similarly, if the global component of $\lVert \cdot \rVert_A$ is stronger than that of $\lVert \cdot \rVert_B$, then the global component of $\lVert \cdot \rVert_{B'}$ is stronger than that of $\lVert \cdot \rVert_{A'}$
\end{proposition}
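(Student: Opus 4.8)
The plan is to reduce both assertions to the embedding characterisation of Theorem~\ref{TEM}, combined with the duality for amalgams (Theorem~\ref{TAS}) and the monotonicity of the associate operation under embeddings (Proposition~\ref{PEASG}). The point is that, by Theorem~\ref{TEM}, the ``local component stronger'' and ``global component stronger'' relations are literally embeddings between suitable amalgams, that passing to associate spaces reverses such embeddings, and that in doing so the roles of the local and global slots get swapped because of the classical identities $(L^1)' = L^{\infty}$ and $(L^{\infty})' = L^1$.

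For the first assertion I would argue as follows. The hypothesis that the local component of $\lVert\cdot\rVert_A$ is stronger than that of $\lVert\cdot\rVert_B$ is, by part~\ref{TEMp1} of Theorem~\ref{TEM} applied with the common global slot $L^{\infty}$, equivalent to the embedding $WL(A,L^{\infty}) \hookrightarrow WL(B,L^{\infty})$. Since both spaces are quasi-Banach function spaces (Theorem~\ref{TQN}), this embedding is the same as an inequality $\lVert f\rVert_{WL(B,L^{\infty})} \le C\lVert f\rVert_{WL(A,L^{\infty})}$ between the defining functionals, so Proposition~\ref{PEASG} gives $\bigl(WL(B,L^{\infty})\bigr)' \hookrightarrow \bigl(WL(A,L^{\infty})\bigr)'$. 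Now Theorem~\ref{TAS} together with $(L^{\infty})' = L^1$ rewrites this as $WL(B',L^1) \hookrightarrow WL(A',L^1)$. Applying part~\ref{TEMp1} of Theorem~\ref{TEM} once more, this time with $\lVert\cdot\rVert_{B'}$ and $\lVert\cdot\rVert_{A'}$ in the local slots and $L^1$ in the common global slot, we read off exactly that the local component of $\lVert\cdot\rVert_{B'}$ is stronger than that of $\lVert\cdot\rVert_{A'}$. The second assertion is proved by the identical chain with the two slots interchanged: starting from $WL(L^1,A)\hookrightarrow WL(L^1,B)$ (part~\ref{TEMp2} of Theorem~\ref{TEM} with the common local slot $L^1$), Proposition~\ref{PEASG} yields $\bigl(WL(L^1,B)\bigr)' \hookrightarrow \bigl(WL(L^1,A)\bigr)'$, Theorem~\ref{TAS} and $(L^1)' = L^{\infty}$ turn this into $WL(L^{\infty},B') \hookrightarrow WL(L^{\infty},A')$, and part~\ref{TEMp2} of Theorem~\ref{TEM}, now read for $\lVert\cdot\rVert_{B'}$ and $\lVert\cdot\rVert_{A'}$, gives the conclusion.

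There is no genuinely hard step; the argument is essentially bookkeeping. Two minor points deserve care. First, one must keep straight which of the two arguments of $WL(\cdot,\cdot)$ is the local slot and which is the global one at each invocation of Theorem~\ref{TEM}, so that the hypotheses and conclusions are matched to the correct part (\ref{TEMp1} or \ref{TEMp2}). Second, since Theorem~\ref{TAS} identifies the associate of an amalgam only up to equivalence of defining functionals, one should observe that the local/global component relations in Theorem~\ref{TEM} are phrased purely in terms of the underlying sets (finiteness of the component (quasi)norms), hence are unaffected by replacing a functional with an equivalent one; this also makes the uses of the classical, constant-one identities $(L^1)'=L^{\infty}$ and $(L^{\infty})'=L^1$ harmless. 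Finally, one uses throughout that the associate of an r.i.~Banach function norm over the resonant space $([0,\infty),\lambda)$ is again an r.i.~Banach function norm (Theorem~\ref{TFA} and the rearrangement invariance of associate spaces), so that Theorems~\ref{TEM} and~\ref{TAS} are legitimately applicable to $\lVert\cdot\rVert_{A'}$ and $\lVert\cdot\rVert_{B'}$.
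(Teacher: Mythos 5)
Your argument is correct and follows essentially the same route as the paper: encode the hypothesis as the embedding $WL(A,L^{\infty}) \hookrightarrow WL(B,L^{\infty})$ (resp.\ $WL(L^1,A)\hookrightarrow WL(L^1,B)$) via Theorem~\ref{TEM}, dualise with Proposition~\ref{PEASG} and Theorem~\ref{TAS} using $(L^{\infty})'=L^1$ and $(L^1)'=L^{\infty}$, and read off the conclusion from Theorem~\ref{TEM} again. The additional remarks about equivalence of functionals not affecting the component relations are fine but not essential.
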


\begin{proof}
	By our assumption and part~\ref{TEMp1} of~Theorem~\ref{TEM} we get that
	\begin{equation*}
		WL(A, L^{\infty}) \hookrightarrow WL(B, L^{\infty}).
	\end{equation*}
	Consequently, it follows from Theorem~\ref{TAS} and Proposition~\ref{PEASG} that
	\begin{equation*}
		WL(B', L^1) = (WL(B, L^{\infty}))' \hookrightarrow (WL(A, L^{\infty}))' = WL(A', L^1),
	\end{equation*}
	that is, the local component of $\lVert \cdot \rVert_{B'}$ is stronger than that of $\lVert \cdot \rVert_ {A'}$.
	
	The second claim can be proved in similar manner, only using $WL(L^1, A)$ and $WL(L^1, B)$ instead of $WL(A, L^{\infty})$ and $WL(B, L^{\infty})$.
\end{proof}

\section{Integrable associate spaces} \label{CHIAS}
In this section we introduce a generalisation of the concept of associate spaces (see Definition~\ref{DAS}) the need of which arose naturally during the study of associate spaces of Wiener--Luxemburg amalgams of r.i.~quasi-Banach function spaces. We then study some of its properties, mainly those directly needed for our purposes.

Unlike in the other parts of the paper, we will now work in a more abstract setting and assume only that $(R, \mu)$ is a resonant measure space.

Our terminology and notation in this section is inspired by the down associate norm which is derived from the concept of down norms. To those interested in this topic we suggest the papers \cite{EdmundsKerman00} and \cite{Sinnamon01}.

\subsection{Basic properties}

Our definition of integrable associate spaces is rather indirect. We proceed by first introducing a certain subspace of an arbitrary r.i.~quasi-Banach function space and then defining the integrable associate space as an associate space of this subspace.

\begin{definition}
	Let $\lVert \cdot \rVert_X$ be an r.i.~quasi-Banach function norm and let $X$ be the corresponding r.i.~quasi-Banach function space. Then the functional $\lVert \cdot \rVert_{X_i}$, defined for $f \in M$ by
	\begin{equation*}
		\lVert f \rVert_{X_i} = \max \{ \lVert f \rVert_X, \, \lVert f \rVert_{WL(L^1, L^{\infty})}  \},
	\end{equation*}
	will be called the integrable norm of $\lVert \cdot \rVert_X$, while the space
	\begin{equation*}
	X_i = \left \{ f \in M; \; \lVert f \rVert_{X_i} < \infty \right \}
	\end{equation*}
	will be called integrable subspace of $X$.
\end{definition}

\begin{theorem} \label{TISS}
	Let $\lVert \cdot \rVert_X$ be an r.i.~quasi-Banach function norm and let $X$ be the corresponding r.i.~quasi-Banach function space. Denote by $C$ the modulus of concavity of $\lVert \cdot \rVert_X$. Then the functional $\lVert \cdot \rVert_{X_i}$ is an r.i.~quasi-Banach function norm that has the property \ref{P5}. Furthermore, if $C = 1$, then $\lVert \cdot \rVert_{X_i}$ is an r.i.~Banach function norm.
	
	Moreover, the space $X_i$ satisfies
	\begin{equation} \label{TISS1}
		X_i = \left \{ f \in X; \; \int_0^{1} f^* \: d\lambda < \infty \right \}
	\end{equation}
	and thus $X_i \hookrightarrow X$. On the other hand, if $\lVert \cdot \rVert_X$ has the property \ref{P5} then $X_i = X$ up to equivalence of quasinorms.
\end{theorem}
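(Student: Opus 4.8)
The strategy is to regard $\lVert\cdot\rVert_{X_i}$ as the pointwise maximum of the two functionals $\lVert\cdot\rVert_X$ and $\lVert\cdot\rVert_{WL(L^1,L^\infty)}$ and to exploit what Theorem~\ref{TQN} already provides about the latter, namely that it is an r.i.~quasi-Banach function norm satisfying~\ref{P5}. I would proceed in four steps: first verify that a pointwise maximum of two r.i.~quasi-Banach function norms is again one, keeping track of the modulus of concavity; then deduce~\ref{P5} for $\lVert\cdot\rVert_{X_i}$; then treat the case $C=1$; and finally establish the set identity~\eqref{TISS1} together with the two concluding statements.

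For the first step, the axioms~\ref{Q1a},~\ref{Q1b},~\ref{P2},~\ref{P3},~\ref{P4} and the rearrangement invariance each pass routinely to the maximum of two functionals possessing them (for~\ref{P3} one uses that $\max\{a_n,b_n\}\uparrow\max\{a,b\}$ whenever $a_n\uparrow a$ and $b_n\uparrow b$, and for~\ref{Q1b} it suffices that $\lVert\cdot\rVert_X$ is non-degenerate). For the quasi-triangle inequality~\ref{Q1c} I would combine the elementary inequality $\max\{a_1+a_2,b_1+b_2\}\le\max\{a_1,b_1\}+\max\{a_2,b_2\}$ with the quasi-triangle inequalities of the two components; this shows $\lVert\cdot\rVert_{X_i}$ is subadditive up to $\max\{C,C_0\}$, where $C_0$ is the modulus of concavity of $\lVert\cdot\rVert_{WL(L^1,L^\infty)}$ (finite by Theorem~\ref{TQN}). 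For~\ref{P5}: given $E\subseteq[0,\infty)$ with $\lambda(E)<\infty$, Theorem~\ref{TQN} furnishes a constant $C_E$ with $\int_E g \: d\lambda\le C_E\lVert g\rVert_{WL(L^1,L^\infty)}$ for all $g\in M_+$, and since $\lVert g\rVert_{WL(L^1,L^\infty)}\le\lVert g\rVert_{X_i}$ this is precisely~\ref{P5} for $\lVert\cdot\rVert_{X_i}$.

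The case $C=1$ is where I expect the real work to lie. When $\lVert\cdot\rVert_X$ satisfies~\ref{P1}, the functional $\lVert\cdot\rVert_{X_i}$ already fulfils every axiom of an r.i.~Banach function norm except possibly the triangle inequality, so everything reduces to proving $\lVert f+g\rVert_{X_i}\le\lVert f\rVert_{X_i}+\lVert g\rVert_{X_i}$. If the left-hand maximum is realised by $\lVert f+g\rVert_X$ this is immediate from subadditivity of $\lVert\cdot\rVert_X$; the case in which it is realised by the amalgam component is the delicate one, and \textbf{is, I expect, the main obstacle}: since $\lVert\cdot\rVert_{WL(L^1,L^\infty)}$ is in general not itself subadditive, one must show that its defect of subadditivity is absorbed, using the classical subadditivity of $f\mapsto\int_0^t f^* \: d\lambda$ for each fixed $t$ (see~\cite[Chapter~2]{BennettSharpley88}) together with the elementary bound $f^*(1)\le\int_0^1 f^* \: d\lambda$; it may help to first replace $\lVert\cdot\rVert_{WL(L^1,L^\infty)}$ by the equivalent genuine norm furnished by Corollary~\ref{CN}.

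Finally, the set identity and embeddings. For any $f\in M$ one has $\lVert f\rVert_{WL(L^1,L^\infty)}=\int_0^1 f^* \: d\lambda+\esssup_{t>1}f^*(t)=\int_0^1 f^* \: d\lambda+f^*(1)$, and since $f^*$ is non-increasing, $f^*(1)\le\int_0^1 f^* \: d\lambda$; hence $\lVert f\rVert_{WL(L^1,L^\infty)}<\infty$ exactly when $\int_0^1 f^* \: d\lambda<\infty$. Therefore $\lVert f\rVert_{X_i}<\infty$ precisely when $f\in X$ and $\int_0^1 f^* \: d\lambda<\infty$, which is~\eqref{TISS1}; the inclusion $X_i\subseteq X$ is then obvious, and since both $X_i$ and $X$ are quasi-Banach function spaces, Theorem~\ref{TEQBFS} promotes it to $X_i\hookrightarrow X$. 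If moreover $\lVert\cdot\rVert_X$ has~\ref{P5}, then applying~\ref{P5} with $E=[0,1]$ to $f^*$ (and invoking rearrangement invariance) gives $\int_0^1 f^* \: d\lambda\le C_{[0,1]}\lVert f\rVert_X$, whence $\lVert f\rVert_{WL(L^1,L^\infty)}\le 2C_{[0,1]}\lVert f\rVert_X$ and consequently $\lVert f\rVert_X\le\lVert f\rVert_{X_i}\le\max\{1,2C_{[0,1]}\}\lVert f\rVert_X$, i.e.~$X_i=X$ with equivalent quasinorms.
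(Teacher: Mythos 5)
Your handling of the quasinorm axioms, of \ref{P5} for $\lVert\cdot\rVert_{X_i}$, of the identity \eqref{TISS1} and of the embedding $X_i\hookrightarrow X$ matches the paper's (which disposes of these via Proposition~\ref{PLC} and Theorem~\ref{TEQBFS}), but there are two genuine gaps.

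First, the final claim ($X_i=X$ when $\lVert\cdot\rVert_X$ has \ref{P5}) must be proved over an arbitrary resonant measure space $(R,\mu)$ --- that is the standing assumption of Section~\ref{CHIAS}. Your argument applies \ref{P5} with $E=[0,1]$ to $f^*$, which tacitly assumes $(R,\mu)=([0,\infty),\lambda)$: in general $[0,1]$ is not a subset of $R$, $f^*$ is not an element of $M(R,\mu)$, and $\lVert f^*\rVert_X$ is undefined. Moreover, \ref{P5} applied to a single set $E\subseteq R$ of measure one only gives $\int_E\lvert f\rvert\,d\mu\le C_E\lVert f\rVert_X$, which sits on the wrong side of the Hardy--Littlewood inequality and does not bound $\int_0^1 f^*\,d\lambda$, and the constants $C_E$ are not a priori uniform over all sets of equal measure; the representation route is also blocked because $\lVert\cdot\rVert_X$ need not be a norm, as the paper explicitly warns. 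The paper instead notes that \ref{P5} makes $\lVert\cdot\rVert_{X'}$ an r.i.~Banach function norm (Theorem~\ref{TFA}, Proposition~\ref{PAS}), chooses $a\ge1$ for which some $E\subseteq R$ with $\mu(E)=a$ exists, and combines resonance with the H\"{o}lder inequality (Theorem~\ref{THAS}) to get $\int_0^1 f^*\,d\lambda\le\int_0^a f^*\,d\lambda=\sup_{\mu(E)=a}\int_E\lvert f\rvert\,d\mu\le\lVert\chi_E\rVert_{X'}\lVert f\rVert_X$, the constant being independent of $E$ by rearrangement invariance of $X'$. (The same oversight appears more mildly in your appeal to Theorem~\ref{TQN} for \ref{P4} and \ref{P5}, which concerns subsets of $[0,\infty)$; there the fix is the one-line estimate $\int_E\lvert f\rvert\,d\mu\le\max\{1,\mu(E)\}\int_0^1 f^*\,d\lambda$.)

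Second, you leave the $C=1$ case as a sketch, and your suspicion about it is justified, but the absorption strategy you outline cannot succeed as stated: with $X=L^\infty$, $f=\chi_{[0,1]}$ and $g=\chi_{[0.5,1.5]}$ one has $\lVert f\rVert_{X_i}=\lVert g\rVert_{X_i}=1$ while $\lVert f+g\rVert_{X_i}=\max\{2,\,3/2+1\}=5/2$, so the functional $\lVert\cdot\rVert_{X_i}$, taken literally, is not subadditive even when $C=1$. Your fallback --- replacing $\lVert\cdot\rVert_{WL(L^1,L^\infty)}$ by the equivalent genuine norm $f\mapsto\int_0^1 f^*\,d\lambda$ from Proposition~\ref{PLC} --- is the right move, and is essentially what the paper does (it quotes Proposition~\ref{PLC} and calls the rest an easy exercise); but it delivers that $\lVert\cdot\rVert_{X_i}$ is \emph{equivalent} to an r.i.~Banach function norm rather than that it is one. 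That weaker conclusion is all the later results (e.g.\ Theorem~\ref{TSIAS}) actually need, so if you complete the proof along these lines you should state the $C=1$ assertion in that form.
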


\begin{proof}
	Given that we know from Proposition~\ref{PLC} that $\lVert \cdot \rVert_{WL(L^1, L^{\infty})}$ is an r.i.~Banach function norm, it is an easy exercise to show that the first part of the theorem holds, i.e. that $\lVert \cdot \rVert_{X_i}$ has the asserted properties. The characterisation of $X_i$ also follows from Proposition~\ref{PLC} while the embedding $X_i \hookrightarrow X$ can then be obtained via Theorem~\ref{TEQBFS} (or directly from the definition of $\lVert \cdot \rVert_{X_i}$).
	
	The last part is slightly more involved. Note that we cannot simply use the Luxemburg representation theorem to show that every $f \in X$ belongs to $X_i$, since $\lVert \cdot \rVert_X$ needs not to be a norm. We thus proceed as follows.
	
	We will assume that $\mu(R) \geq 1$; the remaining case is easier. If $\lVert \cdot \rVert_X$ has the property \ref{P5}, then its associate norm $\lVert \cdot \rVert_{X'}$ is an r.i.~Banach function norm and we can use it, as well as our assumption that the underlying measure space is resonant, to obtain an estimate on 
	\begin{equation*}
		\int_0^{1} f^* \: d\lambda.
	\end{equation*}
	To this end, we use the $\sigma$-finiteness of $(R, \mu)$ to find some $a \in [1, \infty)$ such that there is a set $E$ with $\mu(E) = a$. Then it follows from the Hölder inequality for associate spaces (Theorem~\ref{THAS}) that
	\begin{equation} \label{TISS2}
		\int_0^{1} f^* \: d\lambda \leq \int_0^{a} f^* \: d\lambda = \sup_{\substack{E \subseteq R \\ \mu(E) = a}} \int_E \lvert f \rvert \: d\mu \leq \sup_{\substack{E \subseteq R \\ \mu(E) = a}} \lVert \chi_E \rVert_{X'} \lVert f \rVert_X = C_a \lVert f \rVert_X
	\end{equation}
	where $C_a$ is the norm $\lVert \chi_E \rVert_{X'}$ for any set $E \subseteq R$ such that $\mu(E) = a$. Hence, it follows from \eqref{TISS1} that the sets $X$ and $X_i$ coincide. The equivalence of quasinorms then follows from Theorem~\ref{TEQBFS} (or directly from \eqref{TISS2}).
\end{proof}

\begin{definition}
	Let $\lVert \cdot \rVert_X$ be an r.i.~quasi-Banach function norm, let $X$ be the corresponding r.i.~quasi-Banach function space and let $\lVert \cdot \rVert_{X_i}$ and $X_i$, respectively, be the corresponding integrable norm and integrable subspace. Then the associate norm $\lVert \cdot \rVert_{X_i'}$ of $\lVert \cdot \rVert_{X_i}$ will also be called the integrable associate norm of $\lVert \cdot \rVert_X$, while the associate space $X_i'$ of $X_i$ will also be called the integrable associate space of $X$.
\end{definition}

The next two results describe the properties of the integrable associate spaces and their relation to the associate spaces.

\begin{corollary} \label{CIAS}
	Let $\lVert \cdot \rVert_X$ be an r.i.~quasi-Banach function norm, let $X$ be the corresponding r.i.~quasi-Banach function space and let $\lVert \cdot \rVert_{X_i'}$ and $X_i'$, respectively, be the corresponding integrable associate norm and integrable associate space. Then $\lVert \cdot \rVert_{X_i'}$ is an r.i.~Banach function norm and $X_i'$ is an r.i.~Banach function space.
\end{corollary}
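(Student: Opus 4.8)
The plan is to obtain both assertions as immediate consequences of results already at our disposal; the point is that Theorem~\ref{TISS} was set up precisely so as to feed into Theorem~\ref{TFA}, and there is essentially nothing left to do beyond assembling the citations in the right order.

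First I would check that the integrable norm $\lVert \cdot \rVert_{X_i}$ satisfies the hypotheses of Theorem~\ref{TFA}. By Theorem~\ref{TISS} it is an r.i.~quasi-Banach function norm which has the property \ref{P5}; being a quasi-Banach function norm it satisfies in particular the axiom \ref{P4}; and $\lVert f \rVert_{X_i} = \lVert \, \lvert f \rvert \, \rVert_{X_i}$ holds for all $f \in M$ because this identity is inherited from $\lVert \cdot \rVert_X$ and from $\lVert \cdot \rVert_{WL(L^1, L^{\infty})}$ (the latter depending on $f$ only through $f^*$). Thus Theorem~\ref{TFA} applies with $\lVert \cdot \rVert_{X_i}$ in place of $\lVert \cdot \rVert_X$ and shows that its associate functional, which is by definition the integrable associate norm $\lVert \cdot \rVert_{X_i'}$, is a Banach function norm; consequently $X_i'$ is a Banach function space.

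Second, I would address rearrangement invariance. Since $\lVert \cdot \rVert_{X_i}$ is r.i.~(again by Theorem~\ref{TISS}) and since the underlying measure space is assumed resonant throughout this section, Proposition~\ref{PAS} expresses $\lVert \cdot \rVert_{X_i'}$ through the non-increasing rearrangements of its arguments, whence $\lVert \cdot \rVert_{X_i'}$ is rearrangement-invariant; equivalently, one may simply invoke the consequence of Proposition~\ref{PAS} noted immediately after it, namely that the associate space of an r.i.~quasi-Banach function space over a resonant measure space is again rearrangement-invariant. Combining the two steps, $\lVert \cdot \rVert_{X_i'}$ is an r.i.~Banach function norm and $X_i'$ an r.i.~Banach function space.

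There is no serious obstacle here: every step is a direct reference to an earlier result. The only point requiring a moment's care is the verification that $\lVert \cdot \rVert_{X_i}$ genuinely meets \emph{all} the hypotheses of Theorem~\ref{TFA} — in particular property \ref{P5} — but this is exactly the content of Theorem~\ref{TISS}, so nothing new is needed.
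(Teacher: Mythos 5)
Your argument is correct and follows essentially the same route as the paper, whose proof simply cites Theorem~\ref{TISS} and Theorem~\ref{TFA}; you merely spell out the verification of the hypotheses of Theorem~\ref{TFA} and make explicit the rearrangement-invariance step via Proposition~\ref{PAS} on the resonant measure space, which the paper leaves implicit.
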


\begin{proof}
	This result follows from Theorem~\ref{TISS} and Theorem~\ref{TFA}.
\end{proof}

\begin{corollary} \label{CIASAS}
	Let $\lVert \cdot \rVert_X$ be an r.i.~quasi-Banach function norm and let $X$ be the corresponding r.i.~quasi-Banach function space. If $\lVert \cdot \rVert_X$ has the property \ref{P5} then $X_i' = X'$ up to equivalence of norms.
\end{corollary}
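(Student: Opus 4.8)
The plan is to deduce this directly from Theorem~\ref{TISS} together with Proposition~\ref{PEASG}, so the argument is essentially a two-line reduction. First I would invoke the last part of Theorem~\ref{TISS}: since $\lVert \cdot \rVert_X$ is assumed to have the property \ref{P5}, we have $X_i = X$ up to equivalence of quasinorms, that is, there is a constant $C_0 \geq 1$ such that
\begin{equation*}
	C_0^{-1} \lVert f \rVert_X \leq \lVert f \rVert_{X_i} \leq C_0 \lVert f \rVert_X
\end{equation*}
for all $f \in M$.

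Next I would apply Proposition~\ref{PEASG} twice, once in each direction. The inequality $\lVert f \rVert_X \leq C_0 \lVert f \rVert_{X_i}$ gives, by that proposition, $\lVert f \rVert_{X_i'} \leq C_0 \lVert f \rVert_{X'}$ for all $f \in M$; symmetrically, the inequality $\lVert f \rVert_{X_i} \leq C_0 \lVert f \rVert_X$ gives $\lVert f \rVert_{X'} \leq C_0 \lVert f \rVert_{X_i'}$. Hence $\lVert \cdot \rVert_{X'}$ and $\lVert \cdot \rVert_{X_i'}$ are equivalent and, in particular, $X' = X_i'$ as sets. To see that this is genuinely an equivalence \emph{of norms}, one notes that $\lVert \cdot \rVert_{X'}$ is a Banach function norm by Theorem~\ref{TFA} (its hypotheses \ref{P4} and \ref{P5} hold because $\lVert \cdot \rVert_X$ satisfies \ref{P5}), while $\lVert \cdot \rVert_{X_i'}$ is a Banach function norm by Corollary~\ref{CIAS}.

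I do not expect any real obstacle here; the only point worth flagging is that Proposition~\ref{PEASG} is formulated for arbitrary non-negative functionals and imposes no structural hypotheses on them, so the fact that $\lVert \cdot \rVert_X$ is merely a quasinorm (rather than a norm) causes no difficulty, and no appeal to the Luxemburg representation theorem or to normability of $X$ is needed.
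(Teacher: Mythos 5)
Your argument is correct and is essentially the paper's own proof: the paper simply cites Theorem~\ref{TISS} (which gives $X_i = X$ with equivalent quasinorms under \ref{P5}), the passage to the associate norms via Proposition~\ref{PEASG} being exactly the step you spell out. Your additional remarks (that \ref{PEASG} needs no structural hypotheses, and that both associate functionals are Banach function norms) are accurate and harmless elaborations.
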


\begin{proof}
	This result follows directly from Theorem~\ref{TISS}.
\end{proof}

Next we obtain an analogue to Proposition~\ref{PEASG}.

\begin{corollary} \label{CEIAS}
	Let $\lVert \cdot \rVert_X$ and $\lVert \cdot \rVert_Y$ be a pair of r.i.~quasi-Banach function norms and let $X$ and $Y$, respectively, be the corresponding r.i.~quasi-Banach function spaces. Suppose that $X \hookrightarrow Y$. Then the respective integrable associate spaces $X_i'$ and $Y_i'$ satisfy $X_i' \hookleftarrow Y_i'$. 
\end{corollary}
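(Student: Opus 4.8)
The plan is to reduce the statement to a pointwise comparison of the two integrable norms and then to invoke Proposition~\ref{PEASG}. Recall that the embedding $X \hookrightarrow Y$ means precisely that the identity map is a bounded operator between the quasinormed spaces $X$ and $Y$, so there is a constant $C > 0$ such that $\lVert f \rVert_Y \leq C \lVert f \rVert_X$ for all $f \in M$. What I want to establish is that, with $C' = \max\{C, 1\}$,
\begin{equation*}
	\lVert f \rVert_{Y_i} \leq C' \lVert f \rVert_{X_i}
\end{equation*}
for every $f \in M$; once this is known, Proposition~\ref{PEASG} applied to the non-negative functionals $\lVert \cdot \rVert_{Y_i}$ and $\lVert \cdot \rVert_{X_i}$ (in the roles of $\lVert \cdot \rVert_X$ and $\lVert \cdot \rVert_Y$ there, respectively) yields $\lVert f \rVert_{X_i'} \leq C' \lVert f \rVert_{Y_i'}$ for all $f \in M$, which is exactly the assertion $X_i' \hookleftarrow Y_i'$.

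First I would unwind the definition of the integrable norm: $\lVert f \rVert_{X_i} = \max\{\lVert f \rVert_X, \lVert f \rVert_{WL(L^1, L^{\infty})}\}$, and likewise for $\lVert \cdot \rVert_{Y_i}$. From the defining maximum we have the two trivial bounds $\lVert f \rVert_X \leq \lVert f \rVert_{X_i}$ and $\lVert f \rVert_{WL(L^1, L^{\infty})} \leq \lVert f \rVert_{X_i}$. Combining the first of these with the embedding constant gives $\lVert f \rVert_Y \leq C \lVert f \rVert_X \leq C \lVert f \rVert_{X_i}$, while the second bound says directly that the model term $\lVert f \rVert_{WL(L^1, L^{\infty})}$ is controlled by $\lVert f \rVert_{X_i}$. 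Taking the maximum of these two estimates produces $\lVert f \rVert_{Y_i} \leq \max\{C, 1\}\,\lVert f \rVert_{X_i}$, as required.

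There is essentially no serious obstacle here: the result is a formal consequence of the monotone dependence of the associate operation on the underlying functional (Proposition~\ref{PEASG}), together with the fact that the $WL(L^1, L^{\infty})$-component built into every integrable norm is the same space-independent term and is therefore automatically dominated. The only points requiring a moment's care are that the hypothesis $X \hookrightarrow Y$ must first be converted into the quantitative inequality $\lVert f \rVert_Y \leq C \lVert f \rVert_X$ — legitimate because continuity and boundedness coincide for linear maps between quasinormed spaces — and that Proposition~\ref{PEASG} is genuinely applicable here, which it is, since it is stated for arbitrary non-negative functionals and $\lVert \cdot \rVert_{X_i}$, $\lVert \cdot \rVert_{Y_i}$ are such functionals (in fact r.i.~quasi-Banach function norms by Theorem~\ref{TISS}).
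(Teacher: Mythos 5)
Your proposal is correct and follows essentially the same route as the paper: the paper's proof likewise observes that $X \hookrightarrow Y$ forces $X_i \hookrightarrow Y_i$ (the $WL(L^1,L^\infty)$ component of the integrable norm being common to both) and then applies Proposition~\ref{PEASG}; your write-up just makes the constant $\max\{C,1\}$ explicit. The only point worth noting is that the inequality $\lVert f \rVert_Y \leq C \lVert f \rVert_X$ extends from $f \in X$ to all $f \in M$ simply because the right-hand side is infinite when $f \notin X$, which is needed since Proposition~\ref{PEASG} is stated for functionals on all of $M$.
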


\begin{proof}
	Thanks to our assumptions the respective integrable subspaces $X_i$ and $Y_i$ of $X$ and $Y$ satisfy $X_i \hookrightarrow Y_i$ and thus the result follows from Proposition~\ref{PEASG}.
\end{proof}

Finally, we now formulate the appropriate versions of Hölder inequality and Landau's resonance theorem. Note that while the latter can be formulated only in terms of the original quasinorm, the Hölder inequality requires us to work with the integrable norm which is rather unfortunate.

\begin{corollary}\label{THIAS}
	Let $\lVert \cdot \rVert_X$ be an r.i.~quasi-Banach function norm and let $\lVert \cdot \rVert_{X_i}$ and $\lVert \cdot \rVert_{X_i'}$, respectively, be the corresponding integrable norm and integrable associate norm. Then it holds for every pair of functions $f, g \in M$	that
	\begin{equation*}
	\int_R \lvert fg \rvert \: d\mu \leq \lVert f \rVert_{X_i'} \lVert g \rVert_{X_i}.
	\end{equation*}
\end{corollary}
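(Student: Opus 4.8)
The plan is to observe that the claimed inequality is nothing more than the general Hölder inequality for associate functionals (Theorem~\ref{THAS}) specialised to the integrable norm. Indeed, by definition $\lVert \cdot \rVert_{X_i'}$ is exactly the associate functional of $\lVert \cdot \rVert_{X_i}$ in the sense of Definition~\ref{DAS}, so there is no new content to establish, only the matter of transporting the statement through the extended-real conventions of Theorem~\ref{THAS}.

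Concretely, I would first apply Theorem~\ref{THAS} with the functional denoted $\lVert \cdot \rVert_X$ there taken to be $\lVert \cdot \rVert_{X_i}$, whose associate functional is precisely $\lVert \cdot \rVert_{X_i'}$. This yields, for all $f, g \in M$,
\begin{equation*}
	\int_R \lvert fg \rvert \: d\mu \leq \lVert f \rVert_{X_i'} \lVert g \rVert_{X_i},
\end{equation*}
valid a priori under the conventions $0 \cdot \infty = -\infty \cdot \infty = \infty$ imposed in that theorem.

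The remaining (very minor) step is to verify that those conventions are vacuous in the present setting. By Theorem~\ref{TISS} the functional $\lVert \cdot \rVert_{X_i}$ is an r.i.~quasi-Banach function norm, hence in particular a genuine quasinorm on the space $X_i$: it satisfies $\lVert g \rVert_{X_i} = 0 \Leftrightarrow g = 0$ $\mu$-a.e., so whenever $\lVert g \rVert_{X_i} = 0$ the left-hand side above is $0$ as well; moreover $X_i \neq \emptyset$, since by axiom~\ref{P4} it contains $\chi_E$ for every $E \subseteq R$ with $\mu(E) < \infty$ and such a set of positive measure exists by $\sigma$-finiteness, so $\lVert f \rVert_{X_i'} = \sup_{g \in X_i} \bigl( \lVert g \rVert_{X_i}^{-1} \int_R \lvert fg \rvert \, d\mu \bigr) > -\infty$. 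Thus the inequality holds as stated with ordinary arithmetic in $[0, \infty]$.

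I do not anticipate any genuine obstacle here: the statement is an immediate corollary, and the only thing to be mildly careful about is the bookkeeping with the pathological cases inherited from the deliberately assumption-free formulation of Theorem~\ref{THAS}, which the properties of $\lVert \cdot \rVert_{X_i}$ from Theorem~\ref{TISS} rule out.
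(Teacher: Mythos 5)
Your proposal is correct and matches the paper's treatment: the paper states this as an immediate corollary of the H\"older inequality for associate functionals (Theorem~\ref{THAS}) applied to $\lVert \cdot \rVert_{X_i}$, whose associate functional is by definition $\lVert \cdot \rVert_{X_i'}$, and offers no further argument. Your extra check that the extended-arithmetic conventions of Theorem~\ref{THAS} are vacuous here (via the nondegeneracy and \ref{P4} granted by Theorem~\ref{TISS}) is a harmless, correct refinement of the same route.
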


\begin{corollary} \label{GLT}
	Let $\lVert \cdot \rVert_X$ be an r.i.~quasi-Banach function norm, let $X$ be the corresponding r.i.~quasi-Banach function space and let $\lVert \cdot \rVert_{X_i'}$ and $X_i'$, respectively, be the corresponding integrable associate norm and integrable associate space. Then arbitrary function $f \in M$ belongs to $X_i'$ if and only if it satisfies
	\begin{equation} \label{GLT1}
	\int_R \lvert f g \rvert \: d\mu < \infty
	\end{equation}
	for all $g \in X$ such that
	\begin{equation} \label{GLT2}
	\int_0^{1} g^* \: d\lambda < \infty.
	\end{equation}
\end{corollary}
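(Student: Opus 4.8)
The plan is to reduce the claim to the form of Landau's resonance theorem already recorded as Theorem~\ref{LT}, applied not to $X$ but to its integrable subspace $X_i$. The crucial observation is that the collection of test functions singled out by conditions \eqref{GLT1}--\eqref{GLT2} is nothing other than $X_i$: by the characterisation \eqref{TISS1} in Theorem~\ref{TISS}, a function $g \in M$ lies in $X$ and satisfies $\int_0^1 g^* \, d\lambda < \infty$ precisely when $g \in X_i$. Thus the right-hand condition in the statement says exactly that $\int_R \lvert fg \rvert \, d\mu < \infty$ for every $g \in X_i$.

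Granting this, I would argue as follows. By Theorem~\ref{TISS}, $\lVert \cdot \rVert_{X_i}$ is an r.i.~quasi-Banach function norm, so $X_i$ is the corresponding r.i.~quasi-Banach function space, and by definition $\lVert \cdot \rVert_{X_i'}$ and $X_i'$ are precisely its associate norm and associate space. Applying Theorem~\ref{LT} with $\lVert \cdot \rVert_{X_i}$ in place of $\lVert \cdot \rVert_X$ then yields: a function $f \in M$ belongs to $X_i'$ if and only if $\int_R \lvert fg \rvert \, d\mu < \infty$ for all $g \in X_i$. Combining this with the identification of the test class from the first step gives exactly the asserted equivalence.

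I do not expect a genuine obstacle here: the substance of the argument has already been extracted into Theorem~\ref{TISS}, and Theorem~\ref{LT} requires nothing of $X_i$ beyond its being a quasi-Banach function space. The one point worth flagging is that $\lVert \cdot \rVert_X$ itself need not satisfy \ref{P5}, so one cannot run the usual duality machinery directly on $X$; passing to $X_i$, which does enjoy \ref{P5}, is precisely the device that makes the statement clean, even though Theorem~\ref{LT} does not itself need \ref{P5}.
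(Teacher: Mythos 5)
Your argument is correct and is exactly the paper's proof: the paper also deduces the corollary by identifying the test class in \eqref{GLT1}--\eqref{GLT2} with $X_i$ via \eqref{TISS1} and then applying Theorem~\ref{LT} to the quasi-Banach function norm $\lVert \cdot \rVert_{X_i}$, whose associate space is by definition $X_i'$. No gaps; your remark that \ref{P5} for $X$ is not needed here is also accurate.
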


\begin{proof}
	This result is a consequence of Theorem~\ref{LT} and \eqref{TISS1}.
\end{proof}

\subsection{The second integrable associate space}

We now answer the natural question how does the second integrable associate space relate to the original r.i.~quasi-Banach function space. It turns out that the answer is much more interesting than in the case of associate spaces.

\begin{definition} \label{DSIAN}
	Let $\lVert \cdot \rVert_X$ be an r.i.~quasi-Banach function norm, let $X$ be the corresponding r.i.~quasi-Banach function space and let $\lVert \cdot \rVert_{X_i'}$ and $X_i'$, respectively, be the corresponding integrable associate norm and integrable associate space. We then define the second integrable associate norm $\lVert \cdot \rVert_{X_i''}$ as the integrable associate norm of $\lVert \cdot \rVert_{X_i'}$ and the second integrable associate space $X_i''$ as the integrable associate space of $X_i'$.
\end{definition}

It follows directly from Corollary~\ref{CIAS} and Corollary~\ref{CIASAS} that the second integrable associate norm is equivalent to the associate norm of $\lVert \cdot \rVert_{X_i'}$. An analogous claim holds for the second integrable associate space.

\begin{theorem} \label{TSIAS}
	Let $\lVert \cdot \rVert_X$ be an r.i.~quasi-Banach function norm and let $X$ be the corresponding r.i.~quasi-Banach function space. Then:
	\begin{enumerate}
		\item \label{TSIAS1}
		if $\lVert \cdot \rVert_X$ is a Banach function norm, then $X = X_i''$ with equivalent norms;
		\item \label{TSIAS2}
		if $\lVert \cdot \rVert_X$ has the property \ref{P5} but it is not equivalent to a Banach function norm, then $X \hookrightarrow X_i''$ and $X_i'' \not \hookrightarrow X$;
		\item \label{TSIAS3}
		if $\lVert \cdot \rVert_X$ has the property \ref{P1} but not the property \ref{P5}, then $X_i'' \hookrightarrow X$ and $X \not \hookrightarrow X_i''$;
		\item \label{TSIAS4}
		if $\lVert \cdot \rVert_X$ has neither the property \ref{P1} nor the property \ref{P5}, then the spaces $X$ and $X_i''$ cannot, in general, be compared.
	\end{enumerate}

\end{theorem}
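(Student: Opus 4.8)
The plan is to organise everything around the integrable subspace $X_i$, using two preliminary observations. First, by the remark following Definition~\ref{DSIAN} together with Corollary~\ref{CIAS} and Corollary~\ref{CIASAS}, the second integrable associate norm $\lVert\cdot\rVert_{X_i''}$ agrees up to equivalence with the ordinary second associate norm $\lVert\cdot\rVert_{(X_i)''}$ of $X_i$, so the question is entirely about how $X_i''$ sits relative to $X$. Second, by Corollary~\ref{CIAS} the space $X_i''$ is an r.i.\ Banach function space; it therefore satisfies \ref{P5}, and hence (arguing as in the proof of Lemma~\ref{LHLP}, using rearrangement invariance) every $f\in X_i''$ satisfies $\int_0^1 f^*\,d\lambda<\infty$, i.e.\ $X_i''\subseteq L^1_{\loc}$.

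Assertions~\ref{TSIAS1} and~\ref{TSIAS2} are the case where $\lVert\cdot\rVert_X$ has \ref{P5}. Then Theorem~\ref{TISS} gives $X=X_i$ up to equivalence, Corollary~\ref{CIASAS} gives $X_i'=X'$, and applying Corollary~\ref{CIASAS} once more to $X_i'$ (a Banach function norm, by Corollary~\ref{CIAS}) gives $X_i''=(X_i')'=X''$ up to equivalence. If $\lVert\cdot\rVert_X$ is a Banach function norm, then $X=X''$ by the Lorentz--Luxemburg theorem (Theorem~\ref{TDAS}), whence $X=X_i''$; this is~\ref{TSIAS1}. If $\lVert\cdot\rVert_X$ has \ref{P5} but is not equivalent to a Banach function norm, then Proposition~\ref{PESSAS} gives $\lVert f\rVert_{X''}\le\lVert f\rVert_X$, hence $X\hookrightarrow X''=X_i''$, whereas by Theorem~\ref{TFA} the non-equivalence to a Banach function norm is exactly the failure of the reverse estimate $\lVert f\rVert_X\le C\lVert f\rVert_{X''}$, i.e.\ of $X_i''\hookrightarrow X$; this is~\ref{TSIAS2}.

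For~\ref{TSIAS3} and~\ref{TSIAS4}, $\lVert\cdot\rVert_X$ fails \ref{P5}. Then Theorem~\ref{TP5} produces a nonnegative $f\in X$ with $\int_E f\,d\mu=\infty$ for some $E$ of finite measure, whence $\int_0^{\mu(E)}f^*\,d\lambda=\infty$ by the Hardy--Littlewood inequality (Theorem~\ref{THLI}), and then $\int_0^1 f^*\,d\lambda=\infty$ since $f^*$ is non-increasing; by the second observation $f\in X\setminus X_i''$, and as both spaces are quasi-Banach function spaces, Theorem~\ref{TEQBFS} forces $X\not\hookrightarrow X_i''$ in \emph{both} cases. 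If in addition \ref{P1} holds (case~\ref{TSIAS3}), the modulus of concavity of $\lVert\cdot\rVert_X$ equals $1$, so $\lVert\cdot\rVert_{X_i}$ is itself an r.i.\ Banach function norm (Theorem~\ref{TISS}); hence, by the remark following Definition~\ref{DSIAN} and Theorem~\ref{TDAS}, $X_i''=X_i$ up to equivalence, and the embedding $X_i\hookrightarrow X$ of Theorem~\ref{TISS} becomes $X_i''\hookrightarrow X$. With $X\not\hookrightarrow X_i''$ this is~\ref{TSIAS3}.

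It remains to prove~\ref{TSIAS4}, and this is where the actual work lies: having already established $X\not\hookrightarrow X_i''$, we must exhibit one r.i.\ quasi-Banach function norm satisfying neither \ref{P1} nor \ref{P5} for which also $X_i''\not\hookrightarrow X$, so that $X$ and $X_i''$ are genuinely incomparable. A natural candidate is a Lorentz space $X=L^{1,q}$ with $q\in(1,\infty)$ over $([0,\infty),\lambda)$: it fails \ref{P1} because it is not equivalent to a norm (cf.\ the discussion around Remark~\ref{RHLP}), and it fails \ref{P5} because, e.g., the function with $f^*(t)=\bigl(t\lvert\log t\rvert\bigr)^{-1}$ near the origin (suitably truncated away from it) belongs to $L^{1,q}$ for $q>1$ yet is not locally integrable. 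One then checks that $X_i=\{f\in L^{1,q};\ \int_0^1 f^*\,d\lambda<\infty\}$ is an r.i.\ quasi-Banach function norm with \ref{P5} that is still not equivalent to a Banach function norm---the truncation at the origin leaves its behaviour away from the origin, and hence its non-normability, intact. By Theorem~\ref{TFA} and Theorem~\ref{TEQBFS} this forces $X_i\subsetneq X_i''$ strictly as sets, so there is $h\in X_i''\setminus X_i$; since $X_i''\subseteq L^1_{\loc}$ while $X_i=X\cap L^1_{\loc}$ as sets (Theorem~\ref{TISS}), necessarily $h\in X_i''\setminus X$, i.e.\ $X_i''\not\hookrightarrow X$. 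The genuine obstacle is this last verification---that the integrable subspace remains non-normable and that $X_i''$ therefore escapes $X$---which is an associate-space computation of the delicate type behind Theorem~\ref{TAS}; a technically smoother alternative is to take for $X$ a Wiener--Luxemburg-type amalgam with a weak-$L^1$ local component and an $L^{p,q}$ global component with $p\in(1,\infty)$ and $q\in(0,1)$, for which $X_i$ has local component $L^1$ and global component $L^{p,q}$, so that $X_i''$ has global component $(L^{p,q})''=L^{p,1}\supsetneq L^{p,q}$ and the failure of $X_i''\hookrightarrow X$ is immediate from the embedding criterion.
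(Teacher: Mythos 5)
Your treatment of assertions \ref{TSIAS1}--\ref{TSIAS3} is correct and essentially the paper's own argument: identify $X_i''$ with $(X_i)''$, use Theorem~\ref{TISS}, Corollary~\ref{CIASAS}, Theorem~\ref{TDAS}, Proposition~\ref{PESSAS} and Theorem~\ref{TFA} when \ref{P5} holds, and when \ref{P5} fails use Theorem~\ref{TP5} to produce $f\in X$ that cannot lie in the Banach function space $X_i''$ (your detour through $\int_0^1 f^*\,d\lambda$ and local integrability of members of $X_i''$ is a harmless variant of the paper's direct appeal to \ref{P5} for $X_i''$).

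The genuine gap is in \ref{TSIAS4}, and it sits exactly where you flag it. Your primary example $X=L^{1,q}$, $q\in(1,\infty)$, hinges on the unproved claim that the integrable subspace $X_i$ (which is equivalent to $WL(L^1,L^{1,q})$) is still not equivalent to a Banach function norm; "the truncation at the origin leaves its non-normability intact" is not a proof, and it is far from clear that it is even true. The known obstruction to $L^{1,q}$, $q>1$, being (equivalent to) a Banach function norm is precisely the local one you removed: the space contains functions with $f^*(t)\approx t^{-1}\lvert\log t\rvert^{-\alpha}$, $1/q<\alpha\le 1$, near zero, so its associate space is trivial and Theorem~\ref{TFA} fails trivially; once you intersect with local $L^1$ this mechanism disappears, and standard superposition tests (disjointly supported translates give exact additivity of $\lVert\cdot\rVert_{1,q}$) do not produce any global failure of normability. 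So the decisive step of your main route is unverified and possibly false, and with it the existence of $h\in X_i''\setminus X$. Your fallback example, $X$ an amalgam with weak-$L^1$ local component and $L^{p,q}$ global component ($p\in(1,\infty)$, $q\in(0,1)$), does work, but as written it is only a sketch: you still need to check that this functional fails \ref{P1} literally (e.g.\ the classical failure of the triangle inequality for $\lVert\cdot\rVert_{1,\infty}$ on functions supported in $[0,1]$) and fails \ref{P5}, and to justify the claim about the global component of $X_i''$ by the associate-space computation $X_i=WL(L^1,L^{p,q})$, $(X_i)'=WL(L^{\infty},L^{p',\infty})$ and $X_i''=WL(L^1,L^{p,1})$ via Theorem~\ref{TASq}, Corollary~\ref{CIASAS} and Theorem~\ref{TEMq}, before exhibiting a function with global decay $t^{-1/p}(\log t)^{-\alpha}$, $1<\alpha<1/q$, lying in $X_i''\setminus X$. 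For comparison, the paper avoids all of this with a much lighter example: it takes $X=L^p$, $p<1$, notes that $X_i''$ is an r.i.\ Banach function space and hence satisfies $WL(L^{\infty},L^1)\hookrightarrow X_i''\hookrightarrow WL(L^1,L^{\infty})$ by Theorem~\ref{TLL}, and then observes that $WL(L^{\infty},L^1)\setminus L^p$ and $L^p\setminus WL(L^1,L^{\infty})$ are both nonempty, which rules out either embedding between $X$ and $X_i''$ without computing $X_i''$ at all.
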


\begin{proof}
	The assertion \ref{TSIAS1} follows immediately from Corollary~\ref{CIASAS} and Theorem~\ref{TDAS}.
	
	The positive part of assertion \ref{TSIAS2} follows from Corollary~\ref{CIASAS} and Proposition~\ref{PESSAS} while the negative part follows from the fact that $X_i''$ is a Banach function space by Corollary~\ref{CIAS}.
	
	In the case \ref{TSIAS3} we know from Theorem~\ref{TISS} that the integrable subspace $X_i$ is a Banach function space, hence it follows from Corollary~\ref{CIASAS}, and Theorem~\ref{TDAS} that 
	\begin{equation*}
	X_i'' = (X_i)'' = X_i \hookrightarrow X.
	\end{equation*}
	On the other hand, since $X_i''$ is a Banach function space by Corollary~\ref{CIAS}, we get from Theorem~\ref{TP5} that there is a function in $X$ that does not belong to $X_i''$.
	
	Finally, to observe that there needs not to be any embedding in the case \ref{TSIAS4} one only has to consider the Lebesgue space $L^p$ with $p<1$. Indeed, in this case the space $(L^p)_i''$ is a Banach function space and therefore we have from Theorem~\ref{TLL} that
	\begin{equation*}
	WL(L^{\infty}, L^1) \hookrightarrow (L^p)_i'' \hookrightarrow WL(L^1, L^{\infty}),
	\end{equation*}
	while there are some functions in $WL(L^{\infty}, L^1)$ that do not belong to $L^p$ and at the same time there are some functions in $L^p$ that do not belong to $WL(L^1, L^{\infty})$.
\end{proof}

\section{Wiener--Luxemburg amalgams of quasi-Banach function spaces} \label{CHWLASq}

In this section we extend the theory of Wiener--Luxemburg amalgams to the context of r.i.~quasi-Banach function spaces. This is possible thanks to the recent advances of the general theory of quasi-Banach function spaces developed in \cite{NekvindaPesa20}. We focus on those areas where the generalisation leads to new results and insights.

Throughout this section we restrict ourselves to the case when $(R, \mu) = ([0, \infty), \lambda)$, which ensures that the underlying measure space is resonant. This restriction is necessary because we need to work with the non-increasing rearrangement and, in contrast to the situation in Section~\ref{CHWLAS}, the representation theory is not available for r.i.~quasi Banach function spaces.

\subsection{Wiener--Luxemburg quasinorms for quasi-Banach function spaces}

The definition and the basic properties of Wiener--Luxemburg quasinorms are subject only to the most natural and expected changes.
 
\begin{definition} \label{DefWLq}
	Let $\lVert \cdot \rVert_A$ and $\lVert \cdot \rVert_B$ be r.i.~quasi-Banach function norms. We then define the Wiener--Luxemburg quasinorm $\lVert \cdot \rVert_{WL(A, B)}$, for $f \in M$, by
	\begin{equation}
	\lVert f \rVert_{WL(A, B)} = \lVert f^* \chi_{[0,1]} \rVert_A + \lVert f^* \chi_{(1, \infty)} \rVert_B \label{DefWLNq}
	\end{equation}
	and the corresponding Wiener--Luxemburg amalgam space $WL(A, B)$ as
	\begin{equation*}
	WL(A, B) = \{f \in M; \; \lVert f \rVert_{WL(A, B)} < \infty \}.
	\end{equation*}
	
	Furthermore, we will call the first summand in \eqref{DefWLNq} the local component of $\lVert \cdot \rVert_{WL(A, B)}$ while the second summand will be called the global component of $\lVert \cdot \rVert_{WL(A, B)}$.
\end{definition}

\begin{remark}
	Let $\lVert \cdot \rVert_A$ be an r.i.~quasi-Banach function norm and denote by $C$ its modulus of concavity. Then
	\begin{equation*}
	\lVert f \rVert_A \leq \lVert f \rVert_{WL(A, A)} \leq 2C \lVert f \rVert_A
	\end{equation*}
	for every $f \in M$.
	
	Consequently, it makes a good sense to talk about local and global components of arbitrary r.i.~quasi-Banach function norms. 
\end{remark}

\begin{theorem} \label{TQNq}
	Let $\lVert \cdot \rVert_A$ and $\lVert \cdot \rVert_B$ be r.i.~quasi-Banach function norms. Then the Wiener--Luxemburg quasinorm $\lVert \cdot \rVert_{WL(A, B)}$ is an r.i.~quasi-Banach function norm. Consequently, the corresponding Wiener--Luxemburg amalgam $WL(A,B)$ is a rearrangement-invariant quasi-Banach function space.
	
	Moreover, $\lVert \cdot \rVert_{WL(A, B)}$ has the property $\ref{P5}$ if and only if $\lVert \cdot \rVert_A$ does.
\end{theorem}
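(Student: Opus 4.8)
The plan is to follow the proof of Theorem~\ref{TQN} and verify that it survives the passage from Banach function norms to quasi-Banach function norms, and then to handle separately the new claim about property~\ref{P5}, which in this generality is no longer inherited automatically from $\lVert \cdot \rVert_A$ and $\lVert \cdot \rVert_B$.

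First I would dispose of the routine axioms. The properties \ref{P2}, \ref{P3}, \ref{P4}, the parts \ref{Q1a} and \ref{Q1b} of \ref{Q1}, and rearrangement invariance all follow as in the proof of Theorem~\ref{TQN} from the corresponding properties of $\lVert \cdot \rVert_A$ and $\lVert \cdot \rVert_B$ together with elementary properties of the non-increasing rearrangement; for \ref{P4} one only needs that $\chi_E^* = \chi_{[0,\lambda(E))}$. For the quasi-triangle inequality \ref{Q1c} I would reuse the computation from the proof of Theorem~\ref{TQN} organised around the dilation operator $D_{\frac{1}{2}}$: the few places in that argument which invoked the triangle inequality of $A$ or of $B$ now produce the same estimates up to the moduli of concavity of $\lVert \cdot \rVert_A$ and $\lVert \cdot \rVert_B$, while the boundedness of $D_{\frac{1}{2}}$ on $A$ remains available through Theorem~\ref{TDRIS}, which holds for r.i.~quasi-Banach function spaces. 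This shows that $\lVert \cdot \rVert_{WL(A,B)}$ is an r.i.~quasi-Banach function norm, and the assertion that $WL(A,B)$ is an r.i.~quasi-Banach function space is then immediate from the definition.

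It remains to establish the equivalence concerning \ref{P5}. For the implication that \ref{P5} for $\lVert \cdot \rVert_A$ implies \ref{P5} for $\lVert \cdot \rVert_{WL(A,B)}$, I would fix a set $E$ of finite measure and use the Hardy--Littlewood inequality (Theorem~\ref{THLI}) together with the monotonicity of $f^*$ to obtain $\int_E f \, d\lambda \le \int_0^{\lambda(E)} f^* \, d\lambda \le \max\{1, \lambda(E)\} \int_0^1 f^* \, d\lambda$, after which \ref{P5} for $\lVert \cdot \rVert_A$ applied to the set $[0,1]$ bounds the last integral by $C_{[0,1]} \lVert f^* \chi_{[0,1]} \rVert_A \le C_{[0,1]} \lVert f \rVert_{WL(A,B)}$. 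For the converse, assume $\lVert \cdot \rVert_{WL(A,B)}$ has \ref{P5}; by rearrangement invariance and Theorem~\ref{THLI}, to verify \ref{P5} for $\lVert \cdot \rVert_A$ it suffices to bound $\int_0^a h \, d\lambda$ by a constant multiple of $\lVert h \rVert_A$ for an arbitrary non-increasing $h \in M_+$ and $a \in (0, \infty)$. The key point is that $h \chi_{[0,1]}$ is itself non-increasing, so $(h \chi_{[0,1]})^* = h \chi_{[0,1]}$ and hence $\lVert h \chi_{[0,1]} \rVert_{WL(A,B)} = \lVert h \chi_{[0,1]} \rVert_A \le \lVert h \rVert_A$; applying \ref{P5} for $\lVert \cdot \rVert_{WL(A,B)}$ to the set $[0,1]$ gives $\int_0^1 h \, d\lambda \le C_{[0,1]} \lVert h \rVert_A$, and since $\int_0^a h \, d\lambda \le \max\{1, a\} \int_0^1 h \, d\lambda$ by monotonicity of $h$, this is exactly \ref{P5} for $\lVert \cdot \rVert_A$.

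I expect the only mildly delicate step to be this converse implication: one has to notice that testing \ref{P5} for $\lVert \cdot \rVert_{WL(A,B)}$ only against functions supported in $[0,1]$ collapses the Wiener--Luxemburg quasinorm onto its local component, after which the matter reduces to the same one-dimensional monotone estimates already used in Theorem~\ref{TQN}. Everything else is bookkeeping with the moduli of concavity.
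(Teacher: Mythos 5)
Your proposal is correct. For the quasinorm axioms, rearrangement invariance, and the resulting fact that $WL(A,B)$ is an r.i.\ quasi-Banach function space you follow the same route as the paper, which simply declares this part analogous to Theorem~\ref{TQN}; your observation that the only changes are the moduli of concavity of $\lVert \cdot \rVert_A$ and $\lVert \cdot \rVert_B$ and that Theorem~\ref{TDRIS} remains available in the quasi-Banach setting is precisely what makes that analogy work. The forward half of the \ref{P5} equivalence also coincides with the paper's argument (the paper refers to the proof of Proposition~\ref{PLC}, which is the same Hardy--Littlewood plus monotonicity estimate you give). The genuine divergence is in the converse implication: the paper argues by contraposition, invoking Theorem~\ref{TP5} to extract from the failure of \ref{P5} for $\lVert \cdot \rVert_A$ a non-negative $f_E \in A$ with $\int_E f_E \: d\lambda = \infty$, and then observing that $f = f_E^* \chi_{[0,1]}$ belongs to $WL(A,B)$ while $\int_0^1 f \: d\lambda = \infty$, so \ref{P5} fails for the amalgam too. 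You instead prove the implication directly: you reduce \ref{P5} for $\lVert \cdot \rVert_A$ to non-increasing functions via Theorem~\ref{THLI} and rearrangement invariance, test \ref{P5} of $\lVert \cdot \rVert_{WL(A,B)}$ on $h\chi_{[0,1]}$, where the amalgam quasinorm collapses onto its local component, and finish with the monotone estimate $\int_0^a h \: d\lambda \leq \max\{1,a\}\int_0^1 h \: d\lambda$. Both arguments are sound; yours avoids the nontrivial Theorem~\ref{TP5} and produces explicit constants $C_E = \max\{1,\lambda(E)\}\,C_{[0,1]}$, while the paper's contrapositive is shorter on the page because it reuses machinery already established earlier in the text.
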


\begin{proof}
	The proof of the first part, that is that $\lVert \cdot \rVert_{WL(A, B)}$ has all the properties of an r.i.~quasi-Banach function norm, is analogous to that of Theorem~\ref{TQN}.
	
	The proof that if $\lVert \cdot \rVert_A$ has the property $\ref{P5}$ then the same is true for $\lVert \cdot \rVert_{WL(A, B)}$ is analogous to the appropriate part of the proof of Proposition~\ref{PLC}.
	
	Consider now the case when $\lVert \cdot \rVert_A$ does not have the property $\ref{P5}$ and fix some $E \subseteq [0, \infty)$ that serves as an appropriate counterexample. It follows from Theorem~\ref{TP5} that there is a non-negative function $f_E \in A$ such that
	\begin{equation*}
		\infty = \int_E f_E \: d\lambda \leq \int_0^{\lambda(E)} f_E^* \: d\lambda,
	\end{equation*}
	where the last estimate follows from the Hardy--Littlewood inequality (Theorem~\ref{THLI}). The function $f = f_E^* \chi_{[0,1]}$ then satisfies $\lVert f \rVert_{WL(A,B)} < \infty$ while
	\begin{equation*}
		\int_0^{1} f \: d\lambda = \infty.
	\end{equation*}
\end{proof}

\subsection{Integrable associate spaces of Wiener--Luxemburg amalgams}

An interesting question is how to describe the associate space of a Wiener--Luxemburg amalgam $WL(A,B)$ in the case when $A$ has the property \ref{P5} but $B$ does not. It follows from Theorems~\ref{TFA} and \ref{TQNq} that it will be a Banach function space, but it cannot be described as $WL(A', B')$ since in this case $B' = \{0\}$. Trying to answer this question is what motivates the introduction of integrable associate spaces in the previous section.

The answer to this question will follow as a corollary to the following general theorem, in which we describe the integrable associate spaces of Wiener--Luxemburg amalgams.

\begin{theorem} \label{TASq}
	Let $\lVert \cdot \rVert_A$ and $\lVert \cdot \rVert_B$ be r.i.~quasi-Banach function norms and let $\lVert \cdot \rVert_{A_i'}$ and $\lVert \cdot \rVert_{B_i'}$ be their respective integrable associate norms. Then there is a constant $C>0$ such that the integrable associate norm $\lVert \cdot \rVert_{(WL(A, B))_i'}$ of $\lVert \cdot \rVert_{WL(A, B)}$ satisfies
	\begin{equation}\label{TASq1}
		\lVert f \rVert_{(WL(A, B))_i'} \leq \lVert f \rVert_{WL(A_i', B_i')} \leq C \lVert f \rVert_{(WL(A, B))_i'}
	\end{equation}
	for every $f \in M$.
	
	Consequently, the corresponding integrable associate space satisfies
	\begin{equation*}
		(WL(A, B))_i' = WL(A_i', B_i'),
	\end{equation*}
	up to equivalence of defining functionals.
\end{theorem}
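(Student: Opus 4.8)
The plan is to reduce the statement, via the notion of integrable subspace, to an analogue of Theorem~\ref{TAS} for r.i.~quasi-Banach function norms that have property~\ref{P5}, and then to re-run the argument of Theorem~\ref{TAS} in that generality.

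First I would identify $(WL(A,B))_i$ with $WL(A_i, B_i)$ as a set. Applying the characterisation \eqref{TISS1} to $A$, to $B$, and to $WL(A,B)$, and using the identities $(f^*\chi_{[0,1]})^* = f^*\chi_{[0,1]}$ and $(f^*\chi_{(1,\infty)})^*(t) = f^*(t+1)$, one finds that $f \in WL(A_i, B_i)$ precisely when $f \in WL(A,B)$, $\int_0^1 f^*\,d\lambda < \infty$ and $\int_1^2 f^*\,d\lambda < \infty$; since $f^*(1) \le \int_0^1 f^*\,d\lambda$ the third condition is implied by the second, so this set coincides with $(WL(A,B))_i$. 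Both spaces are r.i.~quasi-Banach function spaces (by Theorems~\ref{TISS} and \ref{TQNq}), so Theorem~\ref{TEQBFS} upgrades the set equality to equivalence of the defining quasinorms, and Proposition~\ref{PEASG}, applied in both directions, then shows that $\lVert \cdot \rVert_{(WL(A,B))_i'}$ and $\lVert \cdot \rVert_{(WL(A_i, B_i))'}$ are equivalent. Moreover $A_i$ and $B_i$ are r.i.~quasi-Banach function norms with property~\ref{P5} by Theorem~\ref{TISS}, hence $WL(A_i, B_i)$ has property~\ref{P5} by Theorem~\ref{TQNq} and $(WL(A_i, B_i))'$ is a Banach function norm by Theorem~\ref{TFA}. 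It therefore remains to prove $(WL(A_i, B_i))' = WL(A_i', B_i')$ up to equivalence, where $A_i' := (A_i)'$ and $B_i' := (B_i)'$.

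The first inequality $\lVert f \rVert_{(WL(A_i, B_i))'} \le \lVert f \rVert_{WL(A_i', B_i')}$ I would obtain exactly as the corresponding one in Theorem~\ref{TAS}: for $g$ with $\lVert g \rVert_{WL(A_i, B_i)} < \infty$, split $\int_0^\infty f^* g^*\,d\lambda = \int_0^1 f^* g^*\,d\lambda + \int_1^\infty f^* g^*\,d\lambda$, apply Hölder for associate spaces (Theorem~\ref{THAS}) on $([0,\infty),\lambda)$ to the pairs $(f^*\chi_{[0,1]}, g^*\chi_{[0,1]})$ and $(f^*\chi_{(1,\infty)}, g^*\chi_{(1,\infty)})$, bound the result by $\lVert f \rVert_{WL(A_i', B_i')}\lVert g \rVert_{WL(A_i, B_i)}$, and finish with Proposition~\ref{PAS} (valid since the underlying space is resonant). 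The second inequality I would again obtain indirectly, by showing $(WL(A_i, B_i))' \subseteq WL(A_i', B_i')$ as sets and invoking Theorem~\ref{TEQBFS}, which applies because $(WL(A_i, B_i))'$ is a Banach function space (just noted) and $WL(A_i', B_i')$ is a quasi-Banach function space by Theorem~\ref{TQN}, as $A_i'$ and $B_i'$ are r.i.~Banach function norms. The set inclusion is proved by contraposition: if $f \notin WL(A_i', B_i')$ then $f^*\chi_{[0,1]} \notin A_i'$ or $f^*\chi_{(1,\infty)} \notin B_i'$, and in each case one produces, via Landau's resonance theorem (Theorem~\ref{LT}) and the Hardy--Littlewood inequality (Theorem~\ref{THLI}), a non-increasing $h$ with $\lVert h \rVert_{WL(A_i, B_i)} < \infty$ and $\int_0^\infty f^* h^*\,d\lambda = \infty$, whence $f \notin (WL(A_i, B_i))'$ by Proposition~\ref{PAS} --- the construction being verbatim that of the two cases in the proof of Theorem~\ref{TAS}. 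Combining the two inequalities with the reduction of the previous paragraph yields the theorem and the displayed set equality.

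I expect the only genuine obstacle to be the second case of the set inclusion (the global-component case), just as in Theorem~\ref{TAS}: one must pass from a test function on $(1,\infty)$ to the globally supported non-increasing function $\tilde g(t) = g^*(t-1)$ and then truncate it, and the estimate that makes this work is $\int_1^2 f^* \tilde g\,d\lambda \le f^*(1)\,C_{[1,2]}\,\lVert \tilde g \rVert_{B_i}$, which uses property~\ref{P5} of the global component. This is precisely the point where replacing $B$ by its integrable subspace $B_i$ (which does satisfy \ref{P5} by Theorem~\ref{TISS}) is essential and cannot be circumvented. All remaining steps are routine, being either direct computations with the non-increasing rearrangement or transcriptions of already-established arguments into the quasi-Banach setting, where the tools needed (Propositions~\ref{PAS} and \ref{PEASG}, Theorems~\ref{LT}, \ref{TFA} and \ref{TEQBFS}) are all available.
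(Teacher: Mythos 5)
Your proof is correct, but it is organised around a different pivot than the paper's argument. The paper works with the integrable associate norm directly: the first inequality in \eqref{TASq1} is obtained from the H\"{o}lder inequality for integrable norms (Corollary~\ref{THIAS}) and is bounded against $\lVert g \rVert_{(WL(A,B))_i}$ itself, while the set inclusion $(WL(A,B))_i' \subseteq WL(A_i', B_i')$ is run through the integrable version of Landau's theorem (Corollary~\ref{GLT}), so that the extra integrability condition \eqref{GLT2} on the test function $g$ is what rescues the estimate $\int_1^2 f^*\tilde{g}\, d\lambda \leq f^*(1)\int_0^1 g^*\, d\lambda < \infty$ (this is \eqref{TASq2} in the paper). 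You instead first prove the identification $(WL(A,B))_i = WL(A_i, B_i)$ as sets via \eqref{TISS1} (your verification, including the observation that $\int_1^2 f^*\,d\lambda \leq f^*(1) \leq \int_0^1 f^*\,d\lambda$ makes the third condition redundant, is sound), upgrade it to equivalence of quasinorms by Theorem~\ref{TEQBFS} and transfer it to the associate functionals by Proposition~\ref{PEASG}, and then re-run the proof of Theorem~\ref{TAS} verbatim for the \ref{P5}-satisfying quasinorms $A_i$, $B_i$; the role played in the paper by \eqref{TASq2} is played in your argument by property \ref{P5} of $B_i$, exactly as you point out. The constructions at the technical heart (the shift $\tilde{g}(t) = g^*(t-1)$ and the truncation $h$) are identical, so what your reduction buys is mainly conceptual clarity --- it makes explicit that the integrable associate space of an amalgam is just the ordinary associate space of the amalgam of integrable subspaces --- at the price of an extra layer of transfers and of one cosmetic loss: since the passage between $\lVert \cdot \rVert_{(WL(A,B))_i'}$ and $\lVert \cdot \rVert_{(WL(A_i,B_i))'}$ goes through Theorem~\ref{TEQBFS}, your version of the first inequality in \eqref{TASq1} acquires a multiplicative constant, whereas the paper's direct H\"{o}lder argument gives it with constant $1$. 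This does not affect the substantive conclusion, namely the two-sided equivalence and the identity $(WL(A,B))_i' = WL(A_i', B_i')$ up to equivalence of defining functionals.
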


\begin{proof}
	We begin by showing the first inequality in \eqref{TASq1}. To this end, fix some $f \in M$ and arbitrary $g \in M$ satisfying $\lVert g \rVert_{WL(A, B)_i} < \infty$. It then follows from Corollary~\ref{THIAS} that
	\begin{equation*}
	\begin{split}
		\int_0^{\infty} f^* g^* \: d\lambda &= \int_0^{\infty} f^*\chi_{[0,1]} g^* \: d\lambda + \int_0^{\infty} f^* \chi_{(1, \infty)} g^* \: d\lambda \\
		&\leq  \lVert f^* \chi_{[0,1]} \rVert_{A_i'} \lVert g^* \chi_{[0,1]} \rVert_{A_i} + \lVert f^* \chi_{(1,\infty)} \rVert_{B_i'} \lVert g^* \chi_{(1,\infty)} \rVert_{B_i} \\
		&\leq \lVert f \rVert_{WL(A_i', B_i')} \cdot \max \{\lVert g^* \chi_{[0,1]} \rVert_{A_i}, \, \lVert g^* \chi_{(1,\infty)} \rVert_{B_i} \} \\
		&\leq \lVert f \rVert_{WL(A_i', B_i')} \lVert g \rVert_{WL(A, B)_i}.
	\end{split}
	\end{equation*}
	The desired inequality now follows by dividing both sides by $\lVert g \rVert_{WL(A, B)_i}$, taking the appropriate supremum and using Proposition~\ref{PAS}.
	
	The second inequality in \eqref{TASq1} is more involved. We obtain it indirectly, showing first that $(WL(A,B))' \subseteq WL(A', B')$ and then using Theorem~\ref{TEQBFS}.
	
	Suppose that $f \notin WL(A_i', B_i')$. Then $f^* \chi_{[0,1]} \notin A_i'$ or $f^* \chi_{(1,\infty)} \notin B_i'$. We treat these two cases separately.
	
	If $f^* \chi_{[0,1]} \notin A_i'$ then we get by Corollary~\ref{GLT} that there is a non-negative function $g \in A$ such that
	\begin{align*}
		\int_0^{1} g^* \: d\lambda &< \infty, \\
		\int_0^{\infty} f^* \chi_{[0,1]} g \: d\lambda &= \infty.
	\end{align*}
	Now, $g^* \chi_{[0,1]} \in WL(A,B)$ because
	\begin{equation*}
		\lVert g^* \chi_{[0,1]} \rVert_{WL(A,B)} = \lVert g^* \chi_{[0,1]} \rVert_A \leq \lVert g^* \rVert_A = \lVert g \rVert_A < \infty.
	\end{equation*}
	Moreover, it obviously holds that
	\begin{equation*}
		\int_0^{1} (g^*\chi_{[0,1]})^* \: d\lambda = \int_0^{1} g^* \: d\lambda < \infty
	\end{equation*}
	and we get, by the Hardy--Littlewood inequality (Theorem~\ref{THLI}), the following estimate:
	\begin{equation*}
		\infty = \int_0^{\infty} f^* \chi_{[0,1]} g \: d\lambda \leq \int_0^{\infty} f^* g^* \chi_{[0,1]} \: d\lambda.
	\end{equation*}
	It thus follows from Corollary~\ref{GLT} that $f \notin (WL(A,B))_i'$.
	
	Suppose now that $f^* \chi_{(1,\infty)} \notin B_i'$. We may assume that $f^*(1) < \infty$, because otherwise $f^* \chi_{[0,1]} = \infty \chi_{[0,1]} \notin A_i'$ (see \cite[Lemma~2.4]{MizutaNekvinda15} or \cite[Theorem~3.4]{NekvindaPesa20}) and thus $f \notin (WL(A,B))_i'$ by the argument above. As in the previous case, we get by Corollary~\ref{GLT} that there is some non-negative function $g \in B$ such that
	\begin{align}
		\int_0^{1} g^* \: d\lambda &< \infty, \label{TASq2} \\
		\int_0^{\infty} f^* \chi_{(1,\infty)} g \: d\lambda &= \infty. \nonumber
	\end{align}
	Now, it holds for all $t \in (0, \infty)$ that
	\begin{equation*}
		(f^* \chi_{(1,\infty)})^*(t) = f^*(t+1),
	\end{equation*}
	which, when combined with the Hardy--Littlewood inequality (Theorem~\ref{THLI}), yields
	\begin{equation*}
		\infty = \int_0^{\infty} f^* \chi_{(1,\infty)} g \: d\lambda \leq \int_0^{\infty} f^*(t+1) g^*(t) \: dt = \int_1^{\infty} f^*(t) g^*(t-1) \: dt.
	\end{equation*}
	If we now put
	\begin{equation*}
		\tilde{g}(t) = \begin{cases}
		0 & \text{for } t \in [0,1], \\
		g^*(t-1) & \text{for } t \in (1, \infty),
		\end{cases}
	\end{equation*}
	we immediately see that $\tilde{g}^* = g^*$ and thus we have found a function $\tilde{g} \in B$ that is zero on $[0, 1]$, non-increasing on $(1, \infty)$ and that satisfies
	\begin{equation*}
		\int_0^{\infty} f^* \chi_{(1,\infty)} \tilde{g} \: d\lambda = \infty.
	\end{equation*}
	Furthermore, we may estimate by \eqref{TASq2}
	\begin{equation*}
		\int_1^{2} f^* \tilde{g} \: d\lambda \leq f^*(1) \int_1^2 \tilde{g} \: d\lambda = f^*(1) \int_0^{1} g^* \: d\lambda  < \infty.
	\end{equation*}
	It follows that
	\begin{equation*}
		\int_2^{\infty} f^* \tilde{g} \: d\lambda = \infty,
	\end{equation*}
	because
	\begin{equation*}
		\infty = \int_0^{\infty} f^* \chi_{(1,\infty)} \tilde{g} \: d\lambda = \int_1^{2} f^* \tilde{g} \: d\lambda + \int_2^{\infty} f^* \tilde{g} \: d\lambda.
	\end{equation*}
	
	Finally, put $h = \tilde{g}(2) \chi_{[0,1]} + \min\{\tilde{g}, \tilde{g}(2)\}$. Note that $\tilde{g}(2) < \infty$ as follows from $\tilde{g} \in B$ (see again \cite[Lemma~2.4]{MizutaNekvinda15} or \cite[Theorem~3.4]{NekvindaPesa20}) and that $h$ is therefore a finite non-increasing function. Hence, we get that
	\begin{gather*}
		\lVert h \rVert_{WL(A,B)} = \tilde{g}(2) \lVert \chi_{(0,1)} \rVert_A + \lVert \min\{\tilde{g}, \tilde{g}(2)\} \rVert_{B} \leq \tilde{g}(2) \lVert \chi_{(0,1)} \rVert_A + \lVert \tilde{g} \rVert_{B} < \infty, \\
		\int_0^{1} h^* \: d\lambda = \tilde{g}(2) < \infty,
	\end{gather*}
	while by the arguments above we have
	\begin{equation*}
		\int_0^{\infty} f^* h^* \: d\lambda \geq \int_2^{\infty} f^* h^* \: d\lambda = \int_2^{\infty} f^* \tilde{g} \: d\lambda = \infty.
	\end{equation*}
	We therefore get from Corollary~\ref{GLT} that $f \notin (WL(A,B))_i'$. This covers the last case and establishes the desired inclusion $(WL(A,B))_i' \subseteq WL(A_i', B_i')$.
	
	Because we already know from Theorem~\ref{TQNq} that $WL(A_i', B_i')$ is a quasi-Banach function space and from Theorem~\ref{CIAS} that $(WL(A,B))_i'$ is a Banach function space, we may use Theorem~\ref{TEQBFS} to obtain $(WL(A,B))_i' \hookrightarrow WL(A_i', B_i')$, i.e.~that there is a constant $C_2>0$ such that it holds for all $f \in M$ that
	\begin{equation*}
		\lVert f \rVert_{WL(A_i', B_i')} \leq C_2 \lVert f \rVert_{(WL(A, B))_i'},
	\end{equation*}
	which concludes the proof.
\end{proof}

\begin{corollary}
	Let $\lVert \cdot \rVert_A$ be an r.i.~quasi-Banach function norm that has the property \ref{P5}, let $\lVert \cdot \rVert_B$ be an r.i.~quasi-Banach function norm, let $\lVert \cdot \rVert_{A'}$ be the associate norm of $\lVert \cdot \rVert_A$ and  let $\lVert \cdot \rVert_{B_i'}$ be the integrable associate norm of $\lVert \cdot \rVert_B$. Then there is a constant $C>0$ such that the associate norm $\lVert \cdot \rVert_{(WL(A, B))'}$ of $\lVert \cdot \rVert_{WL(A, B)}$ satisfies
	\begin{equation*}
		\lVert f \rVert_{(WL(A, B))'} \leq \lVert f \rVert_{WL(A', B_i')} \leq C \lVert f \rVert_{(WL(A, B))'}
	\end{equation*}
	for every $f \in M$.
	
	Consequently, the corresponding associate space satisfies
	\begin{equation*}
		(WL(A, B))' = WL(A', B_i'),
	\end{equation*}
	up to equivalence of defining functionals.
\end{corollary}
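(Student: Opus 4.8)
The plan is to obtain the result as a short consequence of Theorem~\ref{TASq}: under the standing hypothesis the integrable associate norms appearing there coincide, up to equivalence, with the ordinary associate norms. Indeed, since $\lVert\cdot\rVert_A$ has the property~\ref{P5}, Theorem~\ref{TQNq} shows that $\lVert\cdot\rVert_{WL(A,B)}$ has~\ref{P5} as well, so Corollary~\ref{CIASAS} applied to $X=WL(A,B)$ gives $(WL(A,B))'=(WL(A,B))_i'$ up to equivalence of norms; and Corollary~\ref{CIASAS} applied to $X=A$ gives $A_i'=A'$ up to equivalence, hence also $WL(A_i',B_i')=WL(A',B_i')$ up to equivalence, because the Wiener--Luxemburg construction depends on the functional in its local slot only up to equivalence. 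Feeding both identities into the conclusion of Theorem~\ref{TASq} produces
\[
(WL(A,B))' = (WL(A,B))_i' = WL(A_i',B_i') = WL(A',B_i')
\]
up to equivalence of defining functionals, which is precisely the asserted identity; the remaining task is to recover the explicit two-sided estimate.

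The left-hand inequality I would prove directly, imitating the opening of the proof of Theorem~\ref{TASq}. Fix $f\in M$. Since $\lVert\cdot\rVert_{WL(A,B)}$ is an r.i.~quasi-Banach function norm over the resonant space $([0,\infty),\lambda)$, Proposition~\ref{PAS} expresses $\lVert f\rVert_{(WL(A,B))'}$ as the supremum of $\int_0^\infty f^*g^*\,d\lambda$ over $g$ with $\lVert g\rVert_{WL(A,B)}\le 1$. Split the integral at the point $1$: the local part is controlled by the Hölder inequality for associate spaces (Theorem~\ref{THAS}) applied to $\lVert\cdot\rVert_A$, namely $\int_0^1 f^*g^*\,d\lambda\le\lVert f^*\chi_{[0,1]}\rVert_{A'}\,\lVert g^*\chi_{[0,1]}\rVert_A$, while the global part, after the change of variable $t\mapsto t-1$ and the identity $(f^*\chi_{(1,\infty)})^*(t)=f^*(t+1)$, is controlled by the Hölder inequality for integrable associate spaces (Corollary~\ref{THIAS}) applied to $\lVert\cdot\rVert_B$, namely $\int_1^\infty f^*g^*\,d\lambda\le\lVert f^*\chi_{(1,\infty)}\rVert_{B_i'}\,\lVert g^*\chi_{(1,\infty)}\rVert_{B_i}$. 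I expect the main obstacle to be the passage from the integrable norm $\lVert g^*\chi_{(1,\infty)}\rVert_{B_i}$ back to $\lVert g\rVert_{WL(A,B)}$: by definition $\lVert g^*\chi_{(1,\infty)}\rVert_{B_i}$ is the larger of $\lVert g^*\chi_{(1,\infty)}\rVert_B\le\lVert g\rVert_{WL(A,B)}$ and $\int_1^2 g^*\,d\lambda+g^*(2)$, and the latter is at most $2\int_0^1 g^*\,d\lambda$ by monotonicity of $g^*$, hence at most a fixed multiple of $\lVert g^*\chi_{[0,1]}\rVert_A\le\lVert g\rVert_{WL(A,B)}$ by the property~\ref{P5} of $\lVert\cdot\rVert_A$ for the set $[0,1]$ --- this is exactly where the hypothesis on $A$ re-enters. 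Assembling the two estimates and taking the supremum gives $\lVert f\rVert_{(WL(A,B))'}\le\lVert f\rVert_{WL(A',B_i')}$, up to that fixed constant.

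For the reverse inequality I would invoke the soft mechanism used already in Theorem~\ref{TASq}. The chain of identities in the first paragraph gives in particular the set inclusion $(WL(A,B))'\subseteq WL(A',B_i')$; moreover $WL(A',B_i')$ is an r.i.~quasi-Banach function space by Theorem~\ref{TQNq}, while $(WL(A,B))'$ is a Banach function space by Theorem~\ref{TFA} (its defining functional satisfies \ref{P4} and, via Theorem~\ref{TQNq}, also~\ref{P5}). Hence Theorem~\ref{TEQBFS} turns the inclusion into a continuous embedding and supplies the finite constant $C$ with $\lVert f\rVert_{WL(A',B_i')}\le C\lVert f\rVert_{(WL(A,B))'}$ for all $f\in M$. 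The genuinely delicate point in the whole proof is the constant bookkeeping in the global component of the first inequality; everything else is a routine assembly of the results of Sections~\ref{CHIAS} and~\ref{CHWLASq}.
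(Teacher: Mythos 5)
Your first paragraph is exactly the paper's proof: the paper disposes of this corollary in one line by combining Theorem~\ref{TQNq}, Theorem~\ref{TASq} and Corollary~\ref{CIASAS}, which is precisely the chain of identifications you set out. The subsequent direct verification of the two-sided estimate (Hölder via Proposition~\ref{PAS} and Corollary~\ref{THIAS}, plus Theorem~\ref{TEQBFS} for the reverse embedding) is sound but goes beyond what the paper does, so your proposal is correct and takes essentially the same route.
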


\begin{proof}
	The result follows by combining Theorems~\ref{TQNq} and \ref{TASq} and Corollary~\ref{CIASAS}.
\end{proof}

\subsection{Embeddings}

The characterisation of embeddings remains the same and this is also true for its proof (which we thus omit). We state it here only because we will use it later.

\begin{theorem} \label{TEMq}
	Let $\lVert \cdot \rVert_A$, $\lVert \cdot \rVert_B$, $\lVert \cdot \rVert_C$ and $\lVert \cdot \rVert_D$ be r.i.~quasi-Banach function norms. Then the following assertions are true:
	\begin{enumerate}
		\item The embedding $WL(A, C) \hookrightarrow WL(B,C)$ holds if and only if the local component of $\lVert \cdot \rVert_A$ is stronger than that of $\lVert \cdot \rVert_B$, in the sense that for every $f \in M$ the implication
		\begin{equation*}
		\lVert f^* \chi_{[0,1]} \rVert_A < \infty \Rightarrow \lVert f^* \chi_{[0,1]} \rVert_B < \infty
		\end{equation*} \label{TEMqp1}
		holds.
		\item The embedding $WL(A, B) \hookrightarrow WL(A,C)$ holds if and only if the global component of $\lVert \cdot \rVert_B$ is stronger than that of $\lVert \cdot \rVert_C$, in the sense that for every $f \in M$ such that $f^*(1) < \infty$ the implication
		\begin{equation*}
		\lVert f^* \chi_{(1, \infty)} \rVert_B < \infty \Rightarrow \lVert f^* \chi_{(1, \infty)} \rVert_C < \infty
		\end{equation*}
		holds. \label{TEMqp2}
		\item The embedding $WL(A,B) \hookrightarrow WL(C,D)$ holds if and only if the local component of $\lVert \cdot \rVert_A$ is stronger than that of $\lVert \cdot \rVert_C$ and the global component of $\lVert \cdot \rVert_B$ is stronger than that of $\lVert \cdot \rVert_D$. \label{TEMqp3}
	\end{enumerate}
\end{theorem}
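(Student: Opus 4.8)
The plan is to mirror the proof of Theorem~\ref{TEM}, since the only ingredients that proof uses beyond elementary manipulations of the non-increasing rearrangement are that Wiener--Luxemburg amalgams are (quasi-)Banach function spaces and that set inclusion between two such spaces upgrades to a continuous embedding; both have now been established in the quasi-Banach setting, by Theorem~\ref{TQNq} and Theorem~\ref{TEQBFS} respectively. Before starting I would record the one auxiliary observation that is used repeatedly: every $f \in WL(A,B)$ satisfies $f^*(1) < \infty$. Indeed, by Theorem~\ref{TQNq} the space $WL(A,B)$ is an r.i.~quasi-Banach function space, hence consists of functions finite $\mu$-a.e.\ (see \cite[Lemma~2.4]{MizutaNekvinda15} or \cite[Theorem~3.4]{NekvindaPesa20}), so $f^*$ is finite a.e.\ on $(0,\infty)$, and a non-increasing function finite a.e.\ is finite at every positive point.

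For the sufficiency directions in \ref{TEMqp1} and \ref{TEMqp2} I would argue first at the level of sets and then invoke Theorem~\ref{TEQBFS}. In \ref{TEMqp1}, if the local component of $\lVert\cdot\rVert_A$ is stronger than that of $\lVert\cdot\rVert_B$, then for $f \in WL(A,C)$ one has $\lVert f^*\chi_{[0,1]}\rVert_A < \infty$, hence $\lVert f^*\chi_{[0,1]}\rVert_B < \infty$, while $\lVert f^*\chi_{(1,\infty)}\rVert_C < \infty$ is unchanged; thus $WL(A,C) \subseteq WL(B,C)$, and Theorem~\ref{TEQBFS} makes this a continuous embedding. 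In \ref{TEMqp2} the same scheme applies, this time using the auxiliary observation so that the hypothesis on the global components — which is stated only for functions with $f^*(1) < \infty$ — is available for every $f \in WL(A,B)$. Part \ref{TEMqp3} then follows by composition: $WL(A,B) \hookrightarrow WL(A,D) \hookrightarrow WL(C,D)$ using parts \ref{TEMqp2} and \ref{TEMqp1} in turn.

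For the necessity directions I would exhibit explicit non-increasing test functions, exactly as in Theorem~\ref{TEM}. If the local component of $\lVert\cdot\rVert_A$ is not stronger than that of $\lVert\cdot\rVert_B$, pick $f_0$ with $\lVert f_0^*\chi_{[0,1]}\rVert_A < \infty$ but $\lVert f_0^*\chi_{[0,1]}\rVert_B = \infty$; then $f = f_0^*\chi_{[0,1]}$ is non-increasing, so $f^* = f$, and one reads off $f \in WL(A,C)$ and $f \notin WL(B,C)$. If the global component of $\lVert\cdot\rVert_B$ is not stronger than that of $\lVert\cdot\rVert_C$, pick $f_0$ with $f_0^*(1) < \infty$, $\lVert f_0^*\chi_{(1,\infty)}\rVert_B < \infty$ and $\lVert f_0^*\chi_{(1,\infty)}\rVert_C = \infty$, and test with $f = f_0^*(1)\chi_{[0,1]} + f_0^*\chi_{(1,\infty)}$, which is again non-increasing with $f^* = f$, yielding $f \in WL(A,B)$ and $f \notin WL(A,C)$. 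For the necessity in \ref{TEMqp3} one uses whichever of these two constructions corresponds to the condition that fails, now with $C$ and $D$ playing the roles of the target components, and checks in each case that the test function lies in $WL(A,B)$ but not in $WL(C,D)$.

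Since every step is a direct transcription of the Banach-case argument, I do not expect a genuine obstacle. The only points needing a moment of care are the justification that $f^*(1) < \infty$ for members of $WL(A,B)$ — which is precisely why Theorem~\ref{TQNq}, rather than just the definition of the amalgam, must be invoked — and the routine verification that $\lVert f_0^*(1)\chi_{[0,1]}\rVert_A = f_0^*(1)\lVert\chi_{[0,1]}\rVert_A < \infty$, which uses positive homogeneity together with axiom~\ref{P4} for $\lVert\cdot\rVert_A$.
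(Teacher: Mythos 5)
Your proposal follows exactly the route the paper takes: the paper omits the proof of Theorem~\ref{TEMq} precisely because it is a verbatim transcription of the proof of Theorem~\ref{TEM}, with Theorem~\ref{TQNq} and Theorem~\ref{TEQBFS} supplying the quasi-Banach versions of the two needed ingredients, and your sufficiency arguments, the composition argument for part~\ref{TEMqp3}, and the two non-increasing test functions for the necessity directions all coincide with those used there.

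The one step that does not work as written is your justification of the auxiliary observation that $f^*(1)<\infty$ for every $f\in WL(A,B)$. You argue that $f$ is finite a.e.\ (since $WL(A,B)$ is a quasi-Banach function space) and conclude that $f^*$ is finite a.e.; this implication is false in general: for instance $f(t)=t$ on $[0,\infty)$ is finite everywhere, yet its distribution function is identically infinite and so $f^*\equiv\infty$. The correct (and shorter) argument does not go through Theorem~\ref{TQNq} at all: $f\in WL(A,B)$ gives $\lVert f^*\chi_{[0,1]}\rVert_A<\infty$, hence $f^*\chi_{[0,1]}\in A$, and since the quasi-Banach function space $A$ contains only functions finite a.e.\ (the same references you cite), $f^*$ is finite a.e.\ on $[0,1]$; being non-increasing, it is finite at some $t_0<1$ and therefore $f^*(1)\le f^*(t_0)<\infty$. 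With this one-line repair the rest of your argument goes through unchanged.
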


The following theorem generalises Theorem~\ref{TLL} and also provides better insight into the relationships between the individual embeddings and the specific properties a quasi-Banach function norm can possess. We formulate it in a simpler way than Theorem~\ref{TLL}, but this comes at no loss of generality thanks to Theorem~\ref{TEMq}.

\begin{theorem} \label{TLLq}
	Let $\lVert \cdot \rVert_A$ be an r.i.~quasi-Banach function norm. Then
	\begin{enumerate}
		\item  $WL(L^{\infty}, A) \hookrightarrow A$, \label{TLLqp1}
		\item  if $\lVert \cdot \rVert_A$ has the property \ref{P1} then $WL(A, L^1) \hookrightarrow A$, \label{TLLqp2}
		\item  $A \hookrightarrow WL(L^1, A)$ if and only if $\lVert \cdot \rVert_A$ has the property \ref{P5}, \label{TLLqp3}
		\item  $A \hookrightarrow WL(A, L^{\infty})$. \label{TLLqp4}
	\end{enumerate}
\end{theorem}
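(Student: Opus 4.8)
The plan is to treat the four embeddings separately, in the order \ref{TLLqp4}, \ref{TLLqp1}, \ref{TLLqp3}, \ref{TLLqp2}, mirroring the structure of the proof of Theorem~\ref{TLL} but replacing the associate-space machinery, where possible, by direct estimates that only use the lattice property \ref{P2} and the modulus of concavity. Parts \ref{TLLqp1}, \ref{TLLqp4} and one direction of \ref{TLLqp3} reduce to such elementary pointwise comparisons of rearrangements; the implication from $A \hookrightarrow WL(L^1,A)$ back to \ref{P5} in part~\ref{TLLqp3} is a contradiction argument powered by Theorem~\ref{TP5}; and part~\ref{TLLqp2} is obtained by duality from part~\ref{TLLqp4}. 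I expect part~\ref{TLLqp2} to be the only real obstacle, since it is the sole place where the hypothesis \ref{P1} is genuinely needed.

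For part~\ref{TLLqp4}, fix $f \in M$ and note that $\lVert f \rVert_{WL(A, L^{\infty})} = \lVert f^*\chi_{[0,1]} \rVert_A + f^*(1)$ (using right-continuity of $f^*$ to evaluate the global component). The first summand is at most $\lVert f^* \rVert_A = \lVert f \rVert_A$ by \ref{P2} and rearrangement invariance. For the second, since $f^*$ is non-increasing we have $f^*(1)\chi_{[0,1]} \le f^*\chi_{[0,1]}$ pointwise, so \ref{P2} and positive homogeneity give $f^*(1)\lVert \chi_{[0,1]} \rVert_A \le \lVert f^*\chi_{[0,1]} \rVert_A \le \lVert f \rVert_A$, where $\lVert \chi_{[0,1]} \rVert_A \in (0, \infty)$ by \ref{P4} and \ref{Q1b}. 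Hence $\lVert f \rVert_{WL(A, L^{\infty})} \le (1 + \lVert \chi_{[0,1]} \rVert_A^{-1})\lVert f \rVert_A$. Part~\ref{TLLqp1} is analogous: from the pointwise bound $f^* \le \lVert f^*\chi_{[0,1]} \rVert_{L^{\infty}}\,\chi_{[0,1]} + f^*\chi_{(1, \infty)}$, the lattice property together with the quasi-triangle inequality (with modulus of concavity $C$ of $\lVert \cdot \rVert_A$) yields $\lVert f \rVert_A \le C\max\{\lVert \chi_{[0,1]} \rVert_A,\, 1\}\,\lVert f \rVert_{WL(L^{\infty}, A)}$.

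For part~\ref{TLLqp3}, if $\lVert \cdot \rVert_A$ satisfies \ref{P5} with constant $C_{[0,1]}$ for the set $[0,1]$, then $\int_0^1 f^* \, d\lambda = \int_0^1 f^*\chi_{[0,1]} \, d\lambda \le C_{[0,1]} \lVert f^*\chi_{[0,1]} \rVert_A \le C_{[0,1]}\lVert f \rVert_A$, while $\lVert f^*\chi_{(1,\infty)} \rVert_A \le \lVert f \rVert_A$, so $\lVert f \rVert_{WL(L^1, A)} \le (1+C_{[0,1]})\lVert f \rVert_A$. Conversely, if \ref{P5} fails, then exactly as in the proof of Lemma~\ref{LHLP}, Theorem~\ref{TP5} and the Hardy--Littlewood inequality (Theorem~\ref{THLI}) produce a non-negative $f_E \in A$ with $\int_0^t f_E^* \, d\lambda = \infty$ for every $t \in (0,\infty)$; in particular $\lVert f_E^*\chi_{[0,1]} \rVert_{L^1} = \infty$, so $f_E \notin WL(L^1, A)$ although $f_E \in A$, which contradicts $A \hookrightarrow WL(L^1,A)$.

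The remaining part~\ref{TLLqp2} I would do last and expect to be the crux. When $\lVert \cdot \rVert_A$ satisfies \ref{P1} it is an r.i.~Banach function norm, hence so is its associate $\lVert \cdot \rVert_{A'}$ and $A'' = A$ with equal norms by Theorem~\ref{TDAS}. Applying part~\ref{TLLqp4} to $\lVert \cdot \rVert_{A'}$ gives $A' \hookrightarrow WL(A', L^{\infty})$, i.e.~$\lVert \cdot \rVert_{WL(A', L^{\infty})} \le C\lVert \cdot \rVert_{A'}$. Since $\lVert \cdot \rVert_{A'}$ and $\lVert \cdot \rVert_{L^{\infty}}$ are r.i.~Banach function norms, Theorem~\ref{TAS} identifies the associate norm of $\lVert \cdot \rVert_{WL(A', L^{\infty})}$ with $\lVert \cdot \rVert_{WL(A'', (L^{\infty})')} = \lVert \cdot \rVert_{WL(A, L^1)}$, using $(L^{\infty})' = L^1$ over $([0,\infty), \lambda)$. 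Feeding $A' \hookrightarrow WL(A', L^{\infty})$ into Proposition~\ref{PEASG} then yields $\lVert f \rVert_A = \lVert f \rVert_{A''} \le C\lVert f \rVert_{(WL(A', L^{\infty}))'} = C\lVert f \rVert_{WL(A, L^1)}$, that is, $WL(A, L^1) \hookrightarrow A$. The delicate point is precisely that this chain needs $A'' = A$ and the Banach-function-norm hypotheses of Theorem~\ref{TAS}; without \ref{P1} the identity $A'' = A$ can fail and the argument collapses, which is consistent with part~\ref{TLLqp2} being stated only under \ref{P1}.
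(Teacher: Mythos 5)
Your arguments for parts \ref{TLLqp1}, \ref{TLLqp3} and \ref{TLLqp4} are fine and essentially the paper's (direct pointwise estimates on rearrangements, and for the necessity in \ref{TLLqp3} a contrapositive use of Theorem~\ref{TP5}, which is equivalent to the argument in the paper). The genuine gap is in part \ref{TLLqp2}, and it is not where you think it is. The hypothesis there is only that the r.i.~quasi-Banach function norm $\lVert \cdot \rVert_A$ satisfies \ref{P1}; together with the quasi-Banach axioms this gives \ref{P1}--\ref{P4}, but \emph{not} \ref{P5}. Your opening claim ``when $\lVert \cdot \rVert_A$ satisfies \ref{P1} it is an r.i.~Banach function norm'' is therefore unjustified: a Banach function norm also requires \ref{P5}, and this section of the paper treats \ref{P5} as an independent axiom that may fail (see Theorem~\ref{TISS}, case \ref{TSIAS3} of Theorem~\ref{TSIAS}, and indeed part \ref{TLLqp3} of the very theorem you are proving). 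Without \ref{P5}, Theorem~\ref{TFA} does not apply (it needs \ref{P4} \emph{and} \ref{P5}), so you do not know that $\lVert \cdot \rVert_{A'}$ is an r.i.~Banach function norm; Theorem~\ref{TDAS} does not give $A''=A$; and Theorem~\ref{TAS} cannot be invoked for $WL(A', L^{\infty})$. Hence the whole chain $WL(A,L^1) = (WL(A',L^{\infty}))' \hookrightarrow A'' = A$ collapses. You flagged $A''=A$ as the delicate point but attributed its validity to \ref{P1}, whereas what is actually missing is \ref{P5}.

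The paper repairs exactly this defect by passing to the integrable subspace $A_i$, with norm $\max\{\lVert \cdot \rVert_A, \lVert \cdot \rVert_{WL(L^1,L^{\infty})}\}$: by Theorem~\ref{TISS}, under \ref{P1} (modulus of concavity $1$) this \emph{is} an r.i.~Banach function norm, the $WL(L^1,L^{\infty})$ component supplying \ref{P5}. Parts \ref{TLLp1} and \ref{TLLp2} of Theorem~\ref{TLL} (whose proof is your duality argument, legitimately applied because $A_i$ is a genuine r.i.~Banach function space) then give $WL(L^{\infty},L^1) \hookrightarrow A_i \hookrightarrow A$, and combining parts \ref{TEMqp3} and \ref{TEMqp2} of Theorem~\ref{TEMq} converts this into ``the global component of $\lVert \cdot \rVert_{L^1}$ is stronger than that of $\lVert \cdot \rVert_A$,'' hence $WL(A,L^1) \hookrightarrow WL(A,A) = A$ up to equivalence. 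Note that merely running your duality argument for $A_i$ and concluding $WL(A_i,L^1) \hookrightarrow A_i \hookrightarrow A$ is not enough, since in the absence of \ref{P5} the local component of $A_i$ is genuinely stronger than that of $A$, so $WL(A,L^1) \not\subseteq WL(A_i,L^1)$ in general; the detour through the comparison of global components (Theorem~\ref{TEMq}) is really needed.
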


\begin{proof}
	The first and last embeddings are proved in the same way as in Theorem~\ref{TLL}.
	
	The sufficiency in part \ref{TLLqp3} follows exactly as in the part \ref{TLLp3} of Theorem~\ref{TLL}. For the necessity, assume that $A \hookrightarrow WL(L^1, A)$. It then holds for any $E \subseteq [0, \infty)$ of finite measure and any $f \in A$ that
	\begin{equation*}
		\int_E \lvert f \rvert \: d\lambda \leq \int_0^{\lambda(E)} f^* \: d\lambda \leq \max\{1, \, \lambda(E)\} \int_0^{1} f^* \: d\lambda < \infty,
	\end{equation*}
	where the first estimate is due to the Hardy--Littlewood inequality (Theorem~\ref{THLI}), the second estimate is due to $f^*$ being non-increasing, and the last estimate is due to part \ref{TEMqp1} of Theorem~\ref{TEMq}. We now obtain from Theorem~\ref{TP5} that $\lVert \cdot \rVert_A$ must have the property \ref{P5}.
	
	As for the part \ref{TLLqp2}, we know from Theorem~\ref{TISS} that if $\lVert \cdot \rVert_A$ has the property \ref{P1} then its integrable subspace $A_i$ is an r.i.~Banach function space. Hence, we obtain from parts \ref{TLLp1} and \ref{TLLp2} of Theorem~\ref{TLL} that
	\begin{equation*}
		WL(L^{\infty}, L^1) \hookrightarrow A_i \hookrightarrow A.
	\end{equation*}
	The desired conclusion now follows by combining parts \ref{TEMqp3} and \ref{TEMqp2} of Theorem~\ref{TEMq}.
\end{proof}

\begin{remark}
	Unlike in part \ref{TLLqp3} of the preceding theorem, there is no equivalence in part \ref{TLLqp2}. This can be observed by considering $A = L^{p,q}$, where $L^{p,q}$ is a Lorenz space, because if we choose $p \in (1, \infty)$, $q \in (0, 1)$ then $L^{p,q}$ satisfies the embedding (see Remark~\ref{RELpq}) but is not normable (see \cite[Theorem~2.5.8]{CarroRaposo07} and the references therein).
\end{remark}

An alternative sufficient condition for the embedding $WL(A, L^1) \hookrightarrow A$ is provided in the following theorem. The relevant term is defined in Definition~\ref{DHLP} and put into context in Lemma~\ref{LHLP} and Remark~\ref{RHLP}.

\begin{theorem} \label{THLP}
	Let $\lVert \cdot \rVert_A$ be an r.i.~quasi-Banach function norm and assume that the Hardy--Littlewood--P\'{o}lya principle holds for $\lVert \cdot \rVert_A$. Then the global component of $\lVert \cdot \rVert_{L^1}$ is stronger than that of $\lVert \cdot \rVert_A$.
\end{theorem}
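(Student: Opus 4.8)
The plan is to unwind the meaning of the conclusion. By part~\ref{TEMqp2} of Theorem~\ref{TEMq}, the assertion that the global component of $\lVert \cdot \rVert_{L^1}$ is stronger than that of $\lVert \cdot \rVert_A$ means precisely that for every $f \in M$ with $f^*(1) < \infty$ one has the implication
\begin{equation*}
	\lVert f^* \chi_{(1,\infty)} \rVert_{L^1} < \infty \ \Longrightarrow \ \lVert f^* \chi_{(1,\infty)} \rVert_A < \infty .
\end{equation*}
So I would fix such an $f$, set $c := \lVert f^* \chi_{(1,\infty)} \rVert_{L^1} = \int_1^{\infty} f^* \: d\lambda$ and $a := f^*(1)$, assume $c < \infty$, and note that then both $a$ and $c$ are finite; the cases $a = 0$ or $c = 0$ force $f^* \chi_{(1,\infty)} = 0$ $\lambda$-a.e.\ and are trivial, so I may assume $a, c \in (0, \infty)$ and aim to prove $\lVert f^* \chi_{(1,\infty)} \rVert_A < \infty$.

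The heart of the matter is to exhibit a single very simple function that dominates $f^* \chi_{(1,\infty)}$ in the Hardy--Littlewood--P\'{o}lya order and whose $A$-norm is manifestly finite; the natural candidate is the ``box'' $g := a \chi_{(0, c/a)}$. To compare the two I would use the identity $(f^* \chi_{(1,\infty)})^*(t) = f^*(t+1)$ for $t \in (0, \infty)$ --- the same fact already exploited in the proof of Theorem~\ref{TASq} --- from which $(f^* \chi_{(1,\infty)})^* \leq a$ pointwise (by monotonicity of $f^*$) and $\int_0^{\infty} (f^* \chi_{(1,\infty)})^* \: d\lambda = c$. Hence for every $t \in (0, \infty)$,
\begin{equation*}
	\int_0^{t} (f^* \chi_{(1,\infty)})^* \: d\lambda \leq \min \{ at, \, c \} = \int_0^{t} g^* \: d\lambda .
\end{equation*}
The Hardy--Littlewood--P\'{o}lya principle for $\lVert \cdot \rVert_A$ (Definition~\ref{DHLP}) then gives $\lVert f^* \chi_{(1,\infty)} \rVert_A \leq \lVert g \rVert_A = a \lVert \chi_{(0, c/a)} \rVert_A$, and the right-hand side is finite by axiom~\ref{P4}, since $\lambda((0, c/a)) = c/a < \infty$. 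This is exactly the implication required.

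I do not expect a genuine obstacle here: once one spots that the box function majorises $f^* \chi_{(1,\infty)}$ in the relevant order, the strong hypothesis does all the work, which is why this theorem --- unlike, say, Theorem~\ref{TAS} or Theorem~\ref{TASq} --- admits such a short proof. The only points that need a little care are the passage to the non-increasing rearrangement $f^*(\cdot + 1)$ of the (not necessarily monotone) function $f^* \chi_{(1,\infty)}$, and the observation that $\lVert g \rVert_A < \infty$, which is immediate from~\ref{P4}. If one preferred to avoid citing Theorem~\ref{TEMq}, the displayed estimate combined with Definition~\ref{DHLP} verifies the defining implication directly.
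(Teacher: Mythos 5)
Your proof is correct and follows essentially the same route as the paper's: dominate in the Hardy--Littlewood--P\'{o}lya order by a bounded, compactly supported box function and conclude finiteness of its $A$-norm from axiom \ref{P4}. The only (cosmetic) difference is that the paper first augments $f^* \chi_{(1,\infty)}$ to the non-increasing function $f_0 = f^*(1)\chi_{[0,1]} + f^*\chi_{(1,\infty)}$ and majorises it by $\lVert f_0 \rVert_{L^1}\chi_{[0,1]}$, thereby avoiding the rearrangement identity $(f^*\chi_{(1,\infty)})^*(t) = f^*(t+1)$ that you use to compare $f^*\chi_{(1,\infty)}$ directly with the box $f^*(1)\chi_{(0,\,c/f^*(1))}$.
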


\begin{proof}
	Fix some $f \in M$ such that $f^*(1) < \infty$ and $\lVert f^* \chi_{(1, \infty)} \rVert_{L^1} < \infty$. We want to show that $\lVert f^* \chi_{(1, \infty)} \rVert_{A} < \infty$. To this end, consider the non-increasing function $f_0 = f^*(1) \chi_{[0,1]} + f^* \chi_{(1, \infty)}$ which belongs to $L^1$ and the norm of which satisfies
	\begin{equation*}
		f^*(1) \leq \left \lVert f_0 \right \rVert_{L^1} < \infty.
	\end{equation*}
	Hence, we may define a function $h = \left \lVert f_0 \right \rVert_{L^1} \chi_{[0,1]}$ and observe that $h \in A$, $h$ is non-increasing, and
	\begin{equation*}
		\int_0^{t} f_0^* \: d\lambda \leq \int_0^{t} h^* \: d\lambda 
	\end{equation*}
	for all $t \in (0, \infty)$. It thus follows from our assumption on $\lVert \cdot \rVert_A$ that $f_0 \in A$ and consequently $\lVert f^* \chi_{(1, \infty)} \rVert_{A} < \infty$, as desired.
\end{proof}

As a corollary to this theorem we obtain a negative answer to the previously open question whether the Hardy--Littlewood--P\'{o}lya principle holds for every r.i.~quasi-Banach function norm. This also serves as an example of application of Wiener--Luxemburg amalgams, since they appear only in the proof of the corollary, not in its statement.

\begin{corollary} \label{CHLP}
	There is an r.i.~quasi-Banach function norm over $M((0, \infty), \lambda)$ which has the property \ref{P5} and for which the Hardy--Littlewood--P\'{o}lya principle does not hold.
\end{corollary}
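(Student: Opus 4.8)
The plan is to realise the counterexample as a Wiener--Luxemburg amalgam. Fix any $q \in (0,1)$ and set $\lVert \cdot \rVert_A = \lVert \cdot \rVert_{WL(L^1, L^q)}$; since $((0,\infty),\lambda)$ and $([0,\infty),\lambda)$ differ only by a null set, $\lVert \cdot \rVert_A$ is an r.i.~quasi-Banach function norm over $M((0,\infty),\lambda)$ by Theorem~\ref{TQNq}, and, because its local component is built from $L^1$ (which trivially satisfies \ref{P5}), the same theorem yields that $\lVert \cdot \rVert_A$ has the property \ref{P5}. This settles the first half of the statement, so the remaining task is to prove that the Hardy--Littlewood--P\'{o}lya principle fails for $\lVert \cdot \rVert_A$.

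For this I would argue through Theorem~\ref{THLP}: if the principle held for $\lVert \cdot \rVert_A$, then the global component of $\lVert \cdot \rVert_{L^1}$ would be stronger than that of $\lVert \cdot \rVert_A$ (equivalently, since $\lVert\cdot\rVert_A$ is equivalent to $\lVert\cdot\rVert_{WL(A,A)}$, the embedding $WL(A, L^1) \hookrightarrow A$ would hold, by part~\ref{TEMqp2} of Theorem~\ref{TEMq}). So it suffices to exhibit a non-increasing $f$ on $(0,\infty)$ with $f^*(1) < \infty$ and $\lVert f^* \chi_{(1,\infty)} \rVert_{L^1} < \infty$ while $\lVert f^* \chi_{(1,\infty)} \rVert_{A} = \infty$. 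I would take $f(t) = \big((1+t)\log^2(e+t)\big)^{-1}$, which is bounded with $\int_0^\infty f\,d\lambda < \infty$ but $\int_1^\infty f^q\,d\lambda = \infty$ because $q < 1$. Using the identity $(f^*\chi_{(1,\infty)})^*(s) = f^*(s+1)$ from the proof of Theorem~\ref{TASq}, one computes $\lVert f^* \chi_{(1,\infty)} \rVert_{A} = \int_1^2 f\,d\lambda + \big(\int_2^\infty f^q\,d\lambda\big)^{1/q} = \infty$, whereas $\lVert f^*\chi_{(1,\infty)}\rVert_{L^1} = \int_1^\infty f\,d\lambda < \infty$. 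Hence the global component of $\lVert\cdot\rVert_{L^1}$ is not stronger than that of $\lVert\cdot\rVert_A$, and Theorem~\ref{THLP} forces the Hardy--Littlewood--P\'{o}lya principle to fail for $\lVert \cdot \rVert_A$; combined with the previous paragraph this is exactly the assertion of the corollary.

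A self-contained variant, bypassing Theorem~\ref{THLP}, is also available: with the same $f$, set $g = K \chi_{[0,1]}$ where $K = \max\{f(0),\, \int_0^\infty f\,d\lambda\}$. Then monotonicity of $f$ gives $\int_0^t f^*\,d\lambda \leq \int_0^t g^*\,d\lambda$ for every $t > 0$, while $\lVert g \rVert_A = K < \infty$ and $\lVert f \rVert_A = \infty$ by the computation above, directly contradicting Definition~\ref{DHLP}. I do not expect a genuine obstacle in any of this: the substantial content is already contained in Theorems~\ref{TQNq} and~\ref{THLP}, and the only point requiring a little care is that passing to the amalgam with local component $L^1$ repairs \ref{P5} without disturbing the global (tail) behaviour that produces the failure --- which is exactly what Theorem~\ref{TQNq} together with the rearrangement identity for $f^*\chi_{(1,\infty)}$ guarantees.
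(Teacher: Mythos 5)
Your proposal is correct and follows essentially the same route as the paper: the counterexample is the same amalgam $WL(L^1,L^q)$ with $q\in(0,1)$, with Theorem~\ref{TQNq} supplying the r.i.~quasi-Banach structure and property \ref{P5}, and (the contrapositive of) Theorem~\ref{THLP} supplying the failure of the Hardy--Littlewood--P\'{o}lya principle. The only difference is cosmetic: where the paper invokes Remark~\ref{RELp} (via Theorem~\ref{TEMq}) to see that the global component of $\lVert\cdot\rVert_{L^1}$ is not stronger than that of $\lVert\cdot\rVert_{L^q}$, you exhibit the explicit witness $f(t)=\bigl((1+t)\log^2(e+t)\bigr)^{-1}$, and your self-contained variant with $g=K\chi_{[0,1]}$ correctly verifies the failure directly against Definition~\ref{DHLP}.
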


\begin{proof}
	Let $p \in (0, 1)$. It follows from Theorem~\ref{TQNq}, Theorem~\ref{TEMq}, Remark~\ref{RELp}, and Theorem~\ref{THLP} that the spaces $WL(L^1, L^p)$ have the desired properties.
\end{proof}

Finally, we present a result that generalises Proposition~\ref{PEAS} and which is useful when one wants to find an integrable associate space to a given r.i.~quasi-Banach function space.

\begin{proposition} \label{PEIAS}
	Let $\lVert \cdot \rVert_A$ and $\lVert \cdot \rVert_B$ be r.i.~quasi-Banach function norms and denote by $\lVert \cdot \rVert_ {A_i'}$ and $\lVert \cdot \rVert_{B_i'}$ the respective integrable associate norms. Suppose that the local component of $\lVert \cdot \rVert_A$ is stronger than that of $\lVert \cdot \rVert_B$. Then the local component of $\lVert \cdot \rVert_{B_i'}$ is stronger than that of $\lVert \cdot \rVert_{A_i'}$.
	
	Similarly, if the global component of $\lVert \cdot \rVert_A$ is stronger than that of $\lVert \cdot \rVert_B$, then the global component of $\lVert \cdot \rVert_{B_i'}$ is stronger than that of $\lVert \cdot \rVert_{A_i'}$
\end{proposition}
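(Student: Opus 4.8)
The plan is to follow the proof of Proposition~\ref{PEAS} line by line, replacing the tools of Section~\ref{CHWLAS} by their quasi-Banach counterparts developed in Sections~\ref{CHIAS} and~\ref{CHWLASq}. The two structural facts that make the argument go through are Theorem~\ref{TASq}, which identifies $(WL(X,Y))_i'$ with $WL(X_i', Y_i')$, and Corollary~\ref{CEIAS}, which asserts that passing to integrable associate spaces reverses embeddings. In addition I would record at the outset that both $L^1$ and $L^\infty$ satisfy the property~\ref{P5}, so that Corollary~\ref{CIASAS} gives $(L^\infty)_i' = (L^\infty)' = L^1$ and $(L^1)_i' = (L^1)' = L^\infty$, all up to equivalence of norms.

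First I would treat the statement about local components. Assuming the local component of $\lVert \cdot \rVert_A$ is stronger than that of $\lVert \cdot \rVert_B$, part~\ref{TEMqp1} of Theorem~\ref{TEMq} yields the embedding $WL(A, L^\infty) \hookrightarrow WL(B, L^\infty)$. Applying Corollary~\ref{CEIAS} gives $(WL(B, L^\infty))_i' \hookrightarrow (WL(A, L^\infty))_i'$, and then Theorem~\ref{TASq} together with the identification $(L^\infty)_i' = L^1$ rewrites this as $WL(B_i', L^1) \hookrightarrow WL(A_i', L^1)$. Reading this embedding through part~\ref{TEMqp1} of Theorem~\ref{TEMq} once more is exactly the assertion that the local component of $\lVert \cdot \rVert_{B_i'}$ is stronger than that of $\lVert \cdot \rVert_{A_i'}$.

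The claim about global components is obtained by the same chase, but starting from the other edge of the amalgam. Under the hypothesis that the global component of $\lVert \cdot \rVert_A$ is stronger than that of $\lVert \cdot \rVert_B$, part~\ref{TEMqp2} of Theorem~\ref{TEMq} gives $WL(L^1, A) \hookrightarrow WL(L^1, B)$; Corollary~\ref{CEIAS} then yields $(WL(L^1, B))_i' \hookrightarrow (WL(L^1, A))_i'$, and Theorem~\ref{TASq} with $(L^1)_i' = L^\infty$ turns this into $WL(L^\infty, B_i') \hookrightarrow WL(L^\infty, A_i')$. Invoking part~\ref{TEMqp2} of Theorem~\ref{TEMq} a final time gives that the global component of $\lVert \cdot \rVert_{B_i'}$ is stronger than that of $\lVert \cdot \rVert_{A_i'}$, as desired.

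I do not expect a genuine obstacle: the proof is a purely formal composition of correspondences that have already been established. The only points demanding a little care are the identification of the integrable associate norms of $L^1$ and $L^\infty$ — where one must first observe that these spaces have the property~\ref{P5} so that Corollary~\ref{CIASAS} is applicable — and the bookkeeping of which of the two parts of Theorem~\ref{TEMq} is the relevant one at each step (parts~\ref{TEMqp1} for the local half, parts~\ref{TEMqp2} for the global half).
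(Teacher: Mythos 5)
Your proposal is correct and follows essentially the same route as the paper: use part~\ref{TEMqp1} (resp.~\ref{TEMqp2}) of Theorem~\ref{TEMq} to get the amalgam embedding with $L^{\infty}$ (resp.~$L^1$) as the fixed component, then apply Theorem~\ref{TASq}, Corollary~\ref{CIASAS} and Corollary~\ref{CEIAS} to reverse it on the level of integrable associate spaces, and read off the conclusion from Theorem~\ref{TEMq} again. The only difference is that you spell out the identifications $(L^{\infty})_i' = L^1$ and $(L^1)_i' = L^{\infty}$ explicitly, which the paper leaves implicit.
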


\begin{proof}
	By our assumption and part~\ref{TEMqp1} of~Theorem~\ref{TEMq} we get that
	\begin{equation*}
	WL(A, L^{\infty}) \hookrightarrow WL(B, L^{\infty}).
	\end{equation*}
	Consequently, it follows from  Theorem~\ref{TASq}, Corollary~\ref{CIASAS} and Corollary~\ref{CEIAS} that
	\begin{equation*}
	WL(B_i', L^1) = (WL(B, L^{\infty}))_i' \hookrightarrow (WL(A, L^{\infty}))_i' = WL(A_i', L^1),
	\end{equation*}
	that is, the local component of $\lVert \cdot \rVert_{B_i'}$ is stronger than that of $\lVert \cdot \rVert_ {A_i'}$.
	
	The second claim can be proved in similar manner, only using $WL(L^1, A)$ and $WL(L^1, B)$ instead of $WL(A, L^{\infty})$ and $WL(B, L^{\infty})$.
\end{proof}

\appendix
\section{Some remarks on Wiener amalgams} \label{Appendix}

In the appendix we again restrict ourselves to the case when $(R, \mu) = ([0, \infty), \lambda)$ and consider Wiener amalgams of the classical Lebesgue spaces. We show that even this very simple setting is sufficient to obtain an example of a Wiener amalgam which is constructed from a pair of r.i.~Banach function spaces and which is neither rearrangement invariant nor a Banach function space. The reason for this behaviour is that Wiener amalgams treat locality and globality in the topological sense, while Banach function spaces do so in the measure-theoretic sense.

We start by providing the standard simple definition of a Wiener amalgam of Lebesgue spaces (see for example \cite{FournierSteward85} or \cite{Holland75}).

\begin{definition}
	Let $p,q \in [1,\infty]$. Consider the classical Lebesgue spaces $L^p$ of functions belonging to $M([0, \infty), \lambda)$ and $l^q$ of sequences belonging to $M(\mathbb{N}, m)$. We then define, for all $f \in M([0, \infty), \lambda)$, the Wiener norm $\lVert \cdot \rVert_{W(L^p, l^q)}$ by
	\begin{align*}
	\lVert f \rVert_{W(L^p, l^q)} &= \left ( \sum_{n = 0}^{\infty} \lVert f \chi_{[n, n+1)} \rVert_{L^p}^q \right )^{1/q} &\text{for } q \in [1,\infty), \\
	\lVert f \rVert_{W(L^p, l^q)} &= \sup_{n \in \mathbb{N}} \lVert f \chi_{[n, n+1)} \rVert_{L^p} &\text{for } q = \infty,
	\end{align*}
	and the corresponding Wiener amalgam space (or just Wiener amalgam) by
	\begin{equation*}
	W(L^p, l^q) = \{f \in M([0, \infty), \lambda); \; \lVert f \rVert_{W(L^p, l^q))} < \infty \}. \label{DefWS}
	\end{equation*}
\end{definition}

The next proposition shows what properties the Wiener amalgams of Lebesgue spaces do have. Note that the difference in behaviour is precisely that the measure-theoretic conditions on $E$ in \ref{P4} and \ref{P5} are replaced with topological ones.

\begin{proposition} \label{PPWA}
	Let $p,q \in [1,\infty]$. Then the Wiener norm $\lVert \cdot \rVert_{W(L^p, l^q)}$ is indeed a norm and it also satisfies the axioms \ref{P2} and \ref{P3} of Banach function norms together with weaker versions of axioms \ref{P4} and \ref{P5}, namely
	\begin{enumerate}[label=\textup{(P\arabic*')}]
		\setcounter{enumi}{3}
		\item \label{P4'} it holds for every bounded $E \subseteq [0, \infty)$ that $\lVert \chi_E \rVert_{W(L^p, l^q)} <\infty$,
		\item \label{P5'} it holds for every bounded $E \subseteq [0, \infty)$ that there is a constant $C_E < \infty$ satisfying
		\begin{equation*}
		\int_E \lvert f \rvert \: d\lambda \leq C_E \lVert f \rVert_{W(L^p, l^q)}
		\end{equation*}
		for every $f \in M([0, \infty), \lambda)$.
	\end{enumerate}
\end{proposition}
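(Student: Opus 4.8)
The plan is to regard the Wiener norm as a composition of two already understood Banach function structures. To each $f \in M([0,\infty),\lambda)$ associate the non-negative sequence
\begin{equation*}
\Phi(f) = \left( \lVert f\chi_{[n,n+1)} \rVert_{L^p} \right)_{n \in \mathbb{N}} \in M(\mathbb{N}, m),
\end{equation*}
so that $\lVert f \rVert_{W(L^p,l^q)} = \lVert \Phi(f) \rVert_{l^q}$. The assignment $\Phi$ is positively homogeneous, satisfies $\Phi(f+g) \leq \Phi(f) + \Phi(g)$ pointwise on $\mathbb{N}$ (by the triangle inequality in $L^p$, available since $p \geq 1$), is monotone in the sense that $0 \leq f \leq g$ a.e.\ implies $\Phi(f) \leq \Phi(g)$ pointwise (lattice property of $L^p$), and is compatible with increasing limits in the sense that $f_k \uparrow f$ a.e.\ implies $\Phi(f_k) \uparrow \Phi(f)$ pointwise on $\mathbb{N}$ (Fatou property of $L^p$ applied on each interval $[n,n+1)$ separately). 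Each property of $\lVert \cdot \rVert_{W(L^p,l^q)}$ will then be obtained by combining one of these facts with the corresponding property of the Banach function norm $\lVert \cdot \rVert_{l^q}$ over $(\mathbb{N},m)$.

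Concretely, positive homogeneity of the Wiener norm is immediate; if $\lVert f \rVert_{W(L^p,l^q)} = 0$ then every $\lVert f\chi_{[n,n+1)} \rVert_{L^p}$ vanishes, whence $f = 0$ a.e.\ on each $[n,n+1)$ and hence a.e.\ on $[0,\infty)$; and subadditivity follows by combining $\Phi(f+g) \leq \Phi(f)+\Phi(g)$ with the monotonicity and Minkowski's inequality (since $q \geq 1$) for $\lVert \cdot \rVert_{l^q}$. Axiom \ref{P2} follows from monotonicity of $\Phi$ together with monotonicity of $\lVert \cdot \rVert_{l^q}$, and axiom \ref{P3} from compatibility of $\Phi$ with increasing limits together with the Fatou property of $\lVert \cdot \rVert_{l^q}$.

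The two remaining properties are exactly where the topological restriction enters. If $E$ is bounded, say $E \subseteq [0,N)$, then $E$ meets only the intervals $[n,n+1)$ with $0 \leq n \leq N-1$, so $\Phi(\chi_E)$ is supported on this finite set with entries at most $\lVert \chi_{[n,n+1)} \rVert_{L^p} = 1$, and hence $\lVert \chi_E \rVert_{W(L^p,l^q)} < \infty$, which is \ref{P4'}. For \ref{P5'}, again with $E \subseteq [0,N)$, I split the integral over the relevant unit intervals and apply Hölder's inequality in $L^p$ on each of them (the interval has measure $1$, so $\lVert \chi_{[n,n+1)} \rVert_{L^{p'}} = 1$), obtaining
\begin{equation*}
\int_E \lvert f \rvert \: d\lambda \leq \sum_{n=0}^{N-1} \int_{[n,n+1)} \lvert f \rvert \: d\lambda \leq \sum_{n=0}^{N-1} \lVert f\chi_{[n,n+1)} \rVert_{L^p}.
\end{equation*}
Since this is a sum of $N$ non-negative terms, Hölder's inequality in $l^q$ (or simply equivalence of norms on $\mathbb{R}^N$) bounds it by $N^{1/q'} \lVert \Phi(f) \rVert_{l^q} = N^{1/q'} \lVert f \rVert_{W(L^p,l^q)}$ when $q < \infty$, and by $N \lVert f \rVert_{W(L^p,l^\infty)}$ when $q = \infty$; in either case the constant depends only on $N$, hence only on $E$.

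There is no serious obstacle here, since every step reduces to a standard property of $L^p$ or of $l^q$; the point worth stating carefully is the contrast with axioms \ref{P4} and \ref{P5}, where the proof above breaks down precisely because a set of finite \emph{measure} can still intersect infinitely many of the intervals $[n,n+1)$, so that $\Phi(\chi_E)$ need not lie in $l^q$ and the measure-theoretic versions of the axioms genuinely fail.
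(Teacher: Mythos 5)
Your proof is correct and follows essentially the same route as the paper: the substantive points \ref{P4'} and \ref{P5'} are handled, as in the paper, by observing that a bounded set meets only finitely many of the intervals $[n,n+1)$ and then combining a local estimate on each unit interval (your Hölder step plays the role of the paper's appeal to \ref{P5} for $\lVert\cdot\rVert_{L^p}$) with the equivalence of the $l^1$ and $l^q$ norms on a finite-dimensional space. The only difference is cosmetic: the paper treats the norm axioms, \ref{P2}, and \ref{P3} as not requiring proof, whereas you verify them explicitly by factoring the Wiener norm through the sequence $\left(\lVert f\chi_{[n,n+1)}\rVert_{L^p}\right)_{n\in\mathbb{N}}$ and $\lVert\cdot\rVert_{l^q}$.
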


\begin{proof}
	Only the properties \ref{P4'} and \ref{P5'} in the second assertion require a proof. We will cover only the case $q \in [1, \infty)$ since the remaining case is easier.
	
	Fix a bounded set $E \subseteq [0, \infty)$. Then there is $n_0 \in \mathbb{N}$ such that $E \cap [n,n+1) = \emptyset$ for every $n \geq n_0$. Thus the assertion \ref{P4'} follows from the properties \ref{P1}, \ref{P2} and \ref{P4} of $\lVert \cdot \rVert_{L^p}$, since they imply that all the summands in the definition of $\lVert \chi_E \rVert_{W(L^p,l^q)}$ are finite and only finitely many of them are greater than zero.
	
	Similarly, the property \ref{P5'} follows from the property \ref{P5} of $\lVert \cdot \rVert_{L^p}$, since it allows us to estimate
	\begin{equation*}
	\begin{split}
	\int_E \lvert f \rvert \: d\lambda &= \sum_{n = 0}^{\infty} \int_E \lvert f \rvert \chi_{[n, n+1)} \: d\lambda \leq \sum_{n = 0}^{n_0} C_E \lVert f \chi_{[n, n+1)} \rVert_{L^p} \\
	&\leq C_0 C_E \left ( \sum_{n = 0}^{n_0} \lVert f \chi_{[n, n+1)} \rVert_{L^p}^q  \right )^{\frac{1}{q}} \leq C \lVert f \rVert_{W(L^p, l^q)},
	\end{split}
	\end{equation*}
	where $C_E$ is the constant from the property \ref{P5} of $\lVert \cdot \rVert_{L^p}$ for the set $E$ and $C_0$ is the constant from the equivalence of $\lVert \cdot \rVert_{l^1}$ and $\lVert \cdot \rVert_{l^q}$ norms on $\mathbb{R}^{n_0}$.
\end{proof}

We now show that the stronger requirement on $E$ in \ref{P4'} and \ref{P5'} was necessary, i.e.~that the Wiener amalgams of Lebesgue spaces do not in general have the properties \ref{P4} and \ref{P5}.

\begin{proposition} \label{RWNBFS}
	~
	
	\begin{enumerate}
		\item Let $1 \leq q < p \leq \infty$. Then the norm $\lVert \cdot \rVert_{W(L^p, l^q)}$ does not satisfy \ref{P4}. \label{RWNBFS1}
		\item Let $1 \leq p < q \leq \infty$. Then the norm $\lVert \cdot \rVert_{W(L^p, l^q)}$ does not satisfy \ref{P5}. \label{RWNBFS2}
	\end{enumerate}
	
	Consequently, the norm $\lVert \cdot \rVert_{W(L^p, l^q)}$ is a Banach function norm if and only if $1 \leq p=q \leq \infty$, in which case it coincides with the classical Lebesgue norm $\lVert \cdot \rVert_p$.
\end{proposition}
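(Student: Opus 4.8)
The plan is to prove the two failure claims by exhibiting, in each case, a single set $E \subseteq [0,\infty)$ of finite Lebesgue measure that witnesses the violation of the relevant axiom, and then to read off the final equivalence from the observation that for $p=q$ the Wiener norm is literally the Lebesgue norm. Throughout we may freely use Proposition~\ref{PPWA}, which already guarantees that $\lVert \cdot \rVert_{W(L^p,l^q)}$ is a norm satisfying \ref{P2} and \ref{P3}, so only \ref{P4} and \ref{P5} are at issue.

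For the first assertion ($1 \le q < p \le \infty$), I would take $E = \bigcup_{n \ge 1}[n,\,n+a_n)$ with $a_n \in (0,1)$ chosen so that $\sum_n a_n < \infty$ while $\sum_n a_n^{q/p} = \infty$; such a choice exists precisely because $q/p < 1$ — concretely $a_n = n^{-p/q}$ works when $p < \infty$, and $a_n = 2^{-n}$ works when $p = \infty$, where $\lVert \chi_{[n,n+a_n)}\rVert_{L^\infty}=1$. Since $\chi_E\chi_{[n,n+1)} = \chi_{[n,n+a_n)}$ for $n\ge 1$ and $\chi_E\chi_{[0,1)}=0$, one has $\lambda(E) = \sum_n a_n < \infty$ whereas $\lVert \chi_E \rVert_{W(L^p,l^q)}^q = \sum_n a_n^{q/p} = \infty$, so \ref{P4} fails for this $E$.

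For the second assertion ($1 \le p < q \le \infty$) I would instead produce a function of finite Wiener norm whose integral over a finite-measure set is infinite, which directly contradicts \ref{P5}. Let $p',q'$ be the conjugate exponents; then $p < q$ forces $p'/q' > 1$, so one can pick $s \in (1,\,p'/q')$ (reading $p'/q'=p'$, with the endpoint also admissible, when $q = \infty$). Put $E = \bigcup_{n\ge 1}[n,\,n+n^{-s})$, so that $\lambda(E)=\sum_n n^{-s} < \infty$, and set $f = \sum_{n\ge 1} n^{s-1}\chi_{[n,\,n+n^{-s})}$. Then $\int_E f\,d\lambda = \sum_n n^{s-1}n^{-s} = \sum_n n^{-1} = \infty$, while for $q<\infty$ one computes $\lVert f \rVert_{W(L^p,l^q)}^q = \sum_n \bigl(n^{s-1}(n^{-s})^{1/p}\bigr)^q = \sum_n n^{q(s/p'-1)} < \infty$ exactly because $s < p'/q'$; for $q=\infty$ one instead checks $\lVert f\rVert_{W(L^p,l^\infty)} = \sup_n n^{s/p'-1} < \infty$. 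Hence $f \in W(L^p,l^q)$ but no finite constant $C_E$ can satisfy $\int_E |f|\,d\lambda \le C_E \lVert f\rVert_{W(L^p,l^q)}$, so \ref{P5} fails.

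Finally, for the characterization: if $p \neq q$ then one of the two cases above applies and $\lVert\cdot\rVert_{W(L^p,l^q)}$ violates \ref{P4} or \ref{P5}, hence is not a Banach function norm; conversely, if $p=q<\infty$ then additivity of the integral gives $\lVert f\rVert_{W(L^p,l^p)}^p = \sum_n \int_{[n,n+1)}|f|^p\,d\lambda = \int_0^\infty |f|^p\,d\lambda = \lVert f\rVert_p^p$, and if $p=q=\infty$ then $\lVert f\rVert_{W(L^\infty,l^\infty)} = \sup_n \esssup_{[n,n+1)}|f| = \lVert f\rVert_\infty$, so in either case the Wiener norm coincides with the classical Lebesgue norm $\lVert\cdot\rVert_p$, which is a Banach function norm. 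I expect the only genuinely delicate point to be the exponent bookkeeping in the second assertion: one must check that the admissible range $s\in(1,\,p'/q')$ is nonempty — which is exactly the hypothesis $p<q$ — and that a single such $s$ simultaneously keeps $\lambda(E)$ finite, keeps $\lVert f\rVert_{W(L^p,l^q)}$ finite, and forces $\int_E f$ to diverge, including the $q=\infty$ endpoint.
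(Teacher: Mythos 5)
Your proposal is correct and follows essentially the same route as the paper: both parts are proved by the same type of counterexample, namely unions of shrinking intervals $[n,n+n^{-\alpha})$ at the integers together with step functions supported on them, with the exponents tuned so that one of $\lambda(E)$, $\lVert\cdot\rVert_{W(L^p,l^q)}$, $\int_E f$ misbehaves; your single-parameter bookkeeping via conjugate exponents is just a cleaner reparametrization of the paper's two-parameter choice, and you additionally write out the endpoint cases $p=\infty$ and $q=\infty$, which the paper dismisses as easier.
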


\begin{proof}
	We will show part~\ref{RWNBFS1} only for $p < \infty$ since the remaining case is easier. Fix arbitrary $a \in \left (1, \frac{p}{q} \right )$ and define 
	\begin{equation*}
	E = \bigcup_{n \in \mathbb{N}} \left [ n, n+\frac{1}{n^a} \right ].
	\end{equation*}
	Then, by our assumptions on $a$,
	\begin{equation*}
	\lambda(E) = \sum_{n = 0}^{\infty} n^{-a} < \infty 
	\end{equation*}
	but
	\begin{equation*}
	\lVert \chi_E \rVert_{W(L^p, l^q)}^q = \sum_{n = 0}^{\infty} n^{-a \frac{q}{p}} = \infty. 
	\end{equation*}
	
	As for part~\ref{RWNBFS2}, we will show it only for $q < \infty$ since the remaining case is easier. Fix arbitrary $a \in \left ( \frac{p}{q}, 1 \right )$, $b \in \left ( 1, \frac{p-a}{p-1} \right)$ (if $p = 1$ then any $b \in (1, \infty)$ will suffice) and define 
	\begin{align*}
	E &= \bigcup_{n \in \mathbb{N}} \left [ n, n+\frac{1}{n^b} \right ], \\
	f &= \sum_{n=0}^{\infty} n^{\frac{b-a}{p}} \chi_{[n, n+n^{-b}]}.
	\end{align*}
	Then, by our assumptions on $a$ and $b$,
	\begin{align*}
	\lambda(E) &= \sum_{n = 0}^{\infty} n^{-b} < \infty, \\
	\lVert f \rVert_{W(L^p, l^q)}^q &= \sum_{n=0}^{\infty} \left ( \int_n^{n+1} \lvert f \rvert^p \: d\lambda  \right )^{\frac{q}{p}} = \sum_{n=0}^{\infty} n^{-a \frac{q}{p}} < \infty
	\end{align*}
	but
	\begin{equation*}
	\int_E \lvert f \rvert \: d\lambda = \sum_{n=0}^{\infty} n^{\frac{b-a}{p} - b} = \infty.
	\end{equation*}
\end{proof}

As for the rearrangement invariance, we consider the functional $f \mapsto \lVert f^* \rVert_{W(L^p, l^q)}$. It is obvious that $\lVert \cdot \rVert_{W(L^p, l^q)}$ can be equivalent to a rearrangement-invariant norm only if it is equivalent to this functional. But as we show in the following theorem, this functional is in fact equivalent to the Wiener--Luxemburg amalgam quasinorm $\lVert \cdot \rVert_{WL(L^p, L^q)}$, which is always equivalent to a Banach function norm by Corollary~\ref{CN}, and thus by the previous remark the required equivalence can hold only if $p=q$.

\begin{theorem} \label{TIA}
	Let $p,q \in [1,\infty]$. Then the functional
	\begin{equation}
		f \mapsto \lVert f^* \rVert_{W(L^p, l^q)} \label{TIA1}
	\end{equation}
	is an r.i.~quasi-Banach function norm. Furthermore, there is a constant $C \in (0, \infty)$ such that
	\begin{equation} \label{TIA2}
		C^{-1} \lVert f^* \rVert_{W(L^p, l^q)} \leq \lVert f \rVert_{WL(L^p, L^q)} \leq C \lVert f^* \rVert_{W(L^p, l^q)},
	\end{equation}
	for every $f \in M([0, \infty), \lambda)$.
\end{theorem}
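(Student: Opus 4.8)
The plan is to prove the quantitative equivalence \eqref{TIA2} first; the assertion that the functional in \eqref{TIA1} is an r.i.\ quasi-Banach function norm will then follow easily by transferring the missing axioms from $\lVert\cdot\rVert_{WL(L^p,L^q)}$, which is an r.i.\ quasi-Banach function norm by Theorem~\ref{TQN} (applied to the r.i.\ Banach function norms $\lVert\cdot\rVert_{L^p}$ and $\lVert\cdot\rVert_{L^q}$).

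For \eqref{TIA2} the crucial point is that $f^*$ is non-increasing, so that for every $n\in\mathbb{N}$ and every exponent $r\in[1,\infty]$, using $\lVert\chi_{[n,n+1)}\rVert_{L^r}=1$, one has
\begin{equation*}
	f^*(n+1)\le\lVert f^*\chi_{[n,n+1)}\rVert_{L^r}\le f^*(n).
\end{equation*}
Writing $a_n=\lVert f^*\chi_{[n,n+1)}\rVert_{L^p}$ and $b_n=\lVert f^*\chi_{[n,n+1)}\rVert_{L^q}$, these bounds immediately yield the cross-estimates $a_n\le f^*(n)\le b_{n-1}$ and $b_n\le f^*(n)\le a_{n-1}$ for every $n\ge1$, together with $a_1\le f^*(1)\le a_0$. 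For $q<\infty$ we have $\lVert f^*\rVert_{W(L^p,l^q)}=(\sum_{n\ge0}a_n^q)^{1/q}$ and $\lVert f\rVert_{WL(L^p,L^q)}=a_0+(\sum_{n\ge1}b_n^q)^{1/q}$, and I would derive both inequalities in \eqref{TIA2} with $C=2$ from the triangle inequality in $l^q$: to bound $\lVert f^*\rVert_{W(L^p,l^q)}$ by $2\lVert f\rVert_{WL(L^p,L^q)}$, split $(\sum_{n\ge0}a_n^q)^{1/q}\le a_0+(\sum_{n\ge1}a_n^q)^{1/q}$ and then use $a_1\le a_0$ and $a_n\le b_{n-1}$ for $n\ge2$; conversely, to bound $\lVert f\rVert_{WL(L^p,L^q)}$ by $2\lVert f^*\rVert_{W(L^p,l^q)}$, note $a_0\le\lVert f^*\rVert_{W(L^p,l^q)}$ and $\sum_{n\ge1}b_n^q\le\sum_{m\ge0}a_m^q$ (via $b_n\le a_{n-1}$). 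The case $q=\infty$ is handled identically with suprema in place of sums. The only genuinely technical aspect is keeping track of the index shifts and of the endpoint cases $p=\infty$ or $q=\infty$, but this is routine.

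It remains to verify that the functional in \eqref{TIA1} is an r.i.\ quasi-Banach function norm. It satisfies $\lVert\,\lvert f\rvert\,\rVert=\lVert f\rVert$ because $\lvert f\rvert^*=f^*$, it is rearrangement-invariant because equimeasurable functions have identical non-increasing rearrangements, and it is positively homogeneous because the $L^p$ and $l^q$ (quasi)norms are. The lattice property \ref{P2} and the Fatou property \ref{P3} follow directly from the corresponding properties of the non-increasing rearrangement: if $0\le f\le g$ $\lambda$-a.e.\ then $f^*\le g^*$ pointwise, and if $f_n\uparrow f$ $\lambda$-a.e.\ then $f_n^*\uparrow f^*$ pointwise, so the claims reduce to the monotonicity, respectively the monotone convergence theorem, for the $L^p$ norm on each interval $[n,n+1)$ and for the $l^q$ norm. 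Finally, axiom \ref{P4}, the implication $\lVert f\rVert=0\Rightarrow f=0$ $\lambda$-a.e., and the quasi-triangle inequality \ref{Q1c} are inherited from $\lVert\cdot\rVert_{WL(L^p,L^q)}$ by means of \eqref{TIA2} (the modulus of concavity being multiplied by $C^2$). I do not expect any of these last checks to cause difficulty; the heart of the proof is the elementary but slightly fiddly sequence comparison of the second paragraph.
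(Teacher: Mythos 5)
Your proposal is correct, but it runs in the opposite direction to the paper's proof. The paper first verifies the quasinorm axioms for $f \mapsto \lVert f^* \rVert_{W(L^p, l^q)}$ head-on: \ref{P2}, \ref{P3} and rearrangement invariance are immediate, \ref{P4} and \ref{P5} are reduced via the Hardy--Littlewood inequality to Proposition~\ref{PPWA}, and the quasi-triangle inequality \ref{Q1c} is obtained by the same dilation-operator trick ($D_{1/2}$, Theorem~\ref{TDRIS}) as in Theorem~\ref{TQN}, giving modulus $2^{1/p+1/q}$; only then is \eqref{TIA2} deduced non-constructively, by checking that the two functionals are finite on the same set of functions and invoking Theorem~\ref{TEQBFS}. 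You instead prove \eqref{TIA2} directly and quantitatively (your cross-estimates $f^*(n+1)\le\lVert f^*\chi_{[n,n+1)}\rVert_{L^r}\le f^*(n)$ and the index-shifted $\ell^q$ comparisons are sound, including in the extended-value and endpoint cases, and do yield $C=2$), and then inherit \ref{P4}, \ref{Q1b} and \ref{Q1c} from $\lVert\cdot\rVert_{WL(L^p,L^q)}$ via Theorem~\ref{TQN}, checking \ref{P2}, \ref{P3}, homogeneity and rearrangement invariance directly. There is no circularity in this, since your equivalence does not rely on Theorem~\ref{TEQBFS}. What your route buys is an explicit constant in \eqref{TIA2} and no need for the dilation operator or the set-inclusion-plus-\ref{TEQBFS} machinery; what the paper's route buys is a sharper explicit modulus of concavity for the functional itself, a proof of the extra property \ref{P5} along the way (via Proposition~\ref{PPWA}), and less bookkeeping in the comparison step, since only finiteness, not constants, must be tracked. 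Either argument is complete.
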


\begin{proof}
	It is obvious that the functional defined by \eqref{TIA1} has the properties \ref{P2} and \ref{P3} and that it is rearrangement-invariant. Furthermore, if we have $E \subseteq [0, \infty)$ such that $\lambda(E) < \infty$ then $\chi_E^* = \chi_{[0, \lambda(E))}$ and we have by the Hardy--Littlewood inequality (Theorem~\ref{THLI}) that
	\begin{equation*}
		\int_E \lvert f \rvert \: d\lambda \leq \int_{[0, \lambda(E))} f^* \: d\lambda.
	\end{equation*}
	The properties \ref{P4} and \ref{P5} therefore follow from Proposition~\ref{PPWA}.
	
	As for the property \ref{Q1}, the parts \ref{Q1a} and \ref{Q1b} are obvious while the part \ref{Q1c} deserves further comment. Let us fix some $f, g \in M$. We may use a similar argument as in Proposition~\ref{TQN} to obtain for any $n \in \mathbb{N}$ that
	\begin{align*}
		\lVert (f+g)^* \chi_{[n,n+1)} \rVert_{L^p} &\leq \lVert (D_{\frac{1}{2}} f^* + D_{\frac{1}{2}} g^*) \chi_{[n,n+1)} \rVert_{L^p} \\
		&\leq \lVert D_{\frac{1}{2}} f^* \chi_{[n,n+1)} \rVert_{L^p} + \lVert D_{\frac{1}{2}} g^* \chi_{[n,n+1)} \rVert_{L^p} \\
		&\leq 2^{\frac{1}{p}} \left ( \lVert f^* \chi_{[\frac{n}{2}, \frac{n+1}{2})} \rVert_{L^p} + \lVert g^* \chi_{[\frac{n}{2}, \frac{n+1}{2})} \rVert_{L^p} \right ).
	\end{align*}	
	From this we obtain the desired estimate
	\begin{equation*}
		\lVert (f+g)^* \rVert_{W(L^p, l^q)} \leq 2^{\frac{1}{p}+\frac{1}{q}} \left ( \lVert f^* \rVert_{W(L^p, l^q)} + \lVert g^* \rVert_{W(L^p, l^q)} \right ).
	\end{equation*}
	
	It remains to verify \eqref{TIA2}, which will follow from Theorem~\ref{TEQBFS} once we show that it holds for all $f \in M$ that $\lVert f^* \rVert_{W(L^p, l^q)} < \infty$ if and only if $\lVert f \rVert_{WL(L^p,L^q)} < \infty$.
	
	Suppose that $\lVert f^* \rVert_{W(L^p, l^q)} < \infty$. Then we immediately observe that $\lVert f^* \chi_{[0,1]} \rVert_{L^p} < \infty$. Furthermore,
	\begin{equation*}
		\lVert f^* \chi_{(1,\infty)} \rVert_{L^q} = \left ( \sum_{n=1}^{\infty} \int_n^{n+1} (f^*)^q \: d\lambda  \right)^{\frac{1}{q}} \leq \left ( \sum_{n=1}^{\infty} (f^*(n))^q  \right)^{\frac{1}{q}} \leq \left ( \sum_{n = 1}^{\infty} \lVert f^* \chi_{[n-1, n)} \rVert_{L^p}^q \right )^{1/q} < \infty,
	\end{equation*}
	and thus $\lVert f \rVert_{WL(L^p,L^q)} < \infty$.
	
	Conversely, suppose that $\lVert f \rVert_{WL(L^p,L^q)} < \infty$. Then
	\begin{equation*}
	\begin{split}
		\lVert f^* \rVert_{W(L^p, l^q)} &= \left ( \sum_{n = 0}^{\infty} \lVert f^* \chi_{[n, n+1)} \rVert_{L^p}^q \right )^{1/q} \\
		&\leq \left ( \lVert f \chi_{[0, 1)} \rVert_{L^p}^q + \sum_{n = 1}^{\infty} f^*(n)^q \right )^{1/q} \\
		&\leq \left ( 2\lVert f \chi_{[0, 1)} \rVert_{L^p}^q + \sum_{n=2}^{\infty} \int_{n-1}^{n} (f^*)^q \: d\lambda \right )^{1/q} \\
		&=  \left ( 2\lVert f \chi_{[0, 1)} \rVert_{L^p}^q + \int_{1}^{\infty} (f^*)^q \: d\lambda \right )^{1/q} \\
		&< \infty
	\end{split}
	\end{equation*}
	and thus $\lVert f^* \rVert_{W(L^p, l^q)} < \infty$.
\end{proof}

\begin{corollary} \label{CWNERI}
	Let $p, q \in [1, \infty]$. Then the Wiener amalgam norm $\lVert \cdot \rVert_{W(L^p, l^q)}$ is equivalent to a rearrangement-invariant norm if and only if $p = q$ in which case it is the classical Lebesgue norm $\lVert \cdot \rVert_p$.
\end{corollary}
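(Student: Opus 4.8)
The plan is to deduce everything from the two preceding results, Proposition~\ref{RWNBFS} and Theorem~\ref{TIA}, together with the normability of Wiener--Luxemburg amalgams (Corollary~\ref{CN}); no argument from first principles is needed.

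The ``if'' part is immediate. When $p=q\in[1,\infty)$ one computes $\lVert f \rVert_{W(L^p,l^p)}^p = \sum_{n=0}^\infty \int_n^{n+1} \lvert f \rvert^p \, d\lambda = \int_0^\infty \lvert f \rvert^p \, d\lambda = \lVert f \rVert_p^p$, and similarly $\lVert f \rVert_{W(L^\infty,l^\infty)} = \sup_{n} \esssup_{[n,n+1)} \lvert f \rvert = \lVert f \rVert_\infty$; in either case $\lVert \cdot \rVert_{W(L^p,l^q)}$ coincides with the (rearrangement-invariant) Lebesgue norm, as already recorded in Proposition~\ref{RWNBFS}.

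For the ``only if'' part, suppose $\lVert \cdot \rVert_{W(L^p,l^q)}$ is equivalent to a rearrangement-invariant norm $\rho$. First I would record the observation made just before Theorem~\ref{TIA}: since $f$ and $f^*$ are equimeasurable, $\rho(f)=\rho(f^*)$ for every $f$, and combining this with the two-sided equivalence between $\rho$ and $\lVert \cdot \rVert_{W(L^p,l^q)}$ yields a constant $C$ with $C^{-1}\lVert f^* \rVert_{W(L^p,l^q)} \le \lVert f \rVert_{W(L^p,l^q)} \le C \lVert f^* \rVert_{W(L^p,l^q)}$ for all $f$. Next I would invoke Theorem~\ref{TIA}, by which $f \mapsto \lVert f^* \rVert_{W(L^p,l^q)}$ is equivalent to $\lVert \cdot \rVert_{WL(L^p,L^q)}$, and then Corollary~\ref{CN}, by which $\lVert \cdot \rVert_{WL(L^p,L^q)}$ is equivalent to an r.i.~Banach function norm $\lVert \cdot \rVert_X$. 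Chaining these equivalences shows that $\lVert \cdot \rVert_{W(L^p,l^q)}$ itself is equivalent to the Banach function norm $\lVert \cdot \rVert_X$.

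The last step is to rule out $p\neq q$, and here I would use that the defects recorded in Proposition~\ref{RWNBFS} are one-sided and therefore survive passing to an equivalent functional. Concretely, from $\lVert f \rVert_{W(L^p,l^q)} \le C \lVert f \rVert_X$ together with axioms \ref{P4} and \ref{P5} for $\lVert \cdot \rVert_X$ one gets $\lVert \chi_E \rVert_{W(L^p,l^q)} \le C \lVert \chi_E \rVert_X < \infty$ and $\int_E \lvert f \rvert \, d\lambda \le C_E \lVert f \rVert_X \le C_E C \lVert f \rVert_{W(L^p,l^q)}$ whenever $\lambda(E)<\infty$, so that $\lVert \cdot \rVert_{W(L^p,l^q)}$ inherits \ref{P4} and \ref{P5}. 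But if $q<p$, part~\ref{RWNBFS1} of Proposition~\ref{RWNBFS} exhibits a set of finite measure whose characteristic function has infinite $\lVert \cdot \rVert_{W(L^p,l^q)}$-norm, contradicting \ref{P4}; and if $p<q$, part~\ref{RWNBFS2} exhibits a set $E$ of finite measure and a non-negative $f$ with $\lVert f \rVert_{W(L^p,l^q)}<\infty$ but $\int_E f\, d\lambda=\infty$, contradicting \ref{P5}. Hence $p=q$, and the ``if'' part identifies the norm as $\lVert \cdot \rVert_p$. The only slightly delicate point is this last transfer of \ref{P4} and \ref{P5} across an equivalence, and it is precisely why it matters that Proposition~\ref{RWNBFS} is phrased in terms of the blow-up of a single norm or integral rather than merely asserting that $\lVert \cdot \rVert_{W(L^p,l^q)}$ fails to be a Banach function norm.
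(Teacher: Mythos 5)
Your proposal is correct and follows essentially the same route the paper intends: the paper leaves the corollary without a separate printed proof, relying on the discussion preceding Theorem~\ref{TIA} (equivalence to an r.i.\ norm forces equivalence to $f \mapsto \lVert f^* \rVert_{W(L^p,l^q)}$, hence via Theorem~\ref{TIA} and Corollary~\ref{CN} to a Banach function norm, which Proposition~\ref{RWNBFS} rules out unless $p=q$). Your explicit verification that the failures of \ref{P4} and \ref{P5} survive passage to an equivalent functional is exactly the detail the paper leaves implicit, and it is handled correctly.
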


\bibliographystyle{dabbrv}
\bibliography{bibliography}
\end{document}